\title[Automorphism groups of Danielewski surfaces]{Automorphism groups of Danielewski surfaces}
\author{Matthias Leuenberger  and  Andriy Regeta}
\address{Universit\"at Bern, Mathematisches Institut,
 Sidlerstrasse 5,  CH-3012 Bern, Switzerland}
\email{matthias.leuenberger@bluewin.ch }
\address{Universit\"at zu K\"oln, Mathematisches Institut,
 Weyertal 86-90, D-50931 K\"oln, Germany}
\email{andriyregeta@gmail.com}
\thanks{The authors were supported by Swiss National Science Foundation (Schweizerischer National\-fonds).}
\newtheorem*{question}{Question}
\newtheorem{theorem}{Theorem}
\newtheorem{corollary}{Corollary}
\newtheorem{lemma}{Lemma}
\newtheorem{proposition}{Proposition}
\newtheorem{definition and proposition}{Definition and proposition}
\theoremstyle{definition}
\newtheorem{definition}{Definition}
\newtheorem{remark}{Remark}
\newcommand{\name}[1]{\textsc{#1\/}}
\newcommand{\Der}{\mbox{Der}}
\renewcommand{\d}{{\partial}}
\newcommand{\dx}{\frac{\partial}{\partial x}}
\newcommand{\dy}{\frac{\partial}{\partial y}}
\newcommand{\dz}{\frac{\partial}{\partial z}}
\DeclareMathOperator{\Ve}{Vec}
\DeclareMathOperator{\de}{deg}
\DeclareMathOperator{\Lie}{Lie}
\DeclareMathOperator{\Aut}{Aut}
\DeclareMathOperator{\SAut}{SAut}
\DeclareMathOperator{\GL}{GL}
\DeclareMathOperator{\SL}{SL}
\DeclareMathOperator{\Ad}{Ad}
\DeclareMathOperator{\li}{lim}
\DeclareMathOperator{\Hom}{Hom}
\DeclareMathOperator{\di}{div}
\DeclareMathOperator{\U}{U}
\DeclareMathOperator{\LND}{LND}
\DeclareMathOperator{\prr}{pr}
\def\Dp{\mathrm{D}_p}
\def\Dh{\mathrm{D}_h}
\def\Dz(z-1){\mathrm{D}_{z(z-1)}}
\def\Dq{\mathrm{D}_q}
\def\C{\mathbb{C}} 
\def\N{\mathbb{N}}
\newcommand{\lielnd}[1]{\langle \LND(#1) \rangle}
\def\O{{\mathcal O}}
\def\isorightarrow {\xrightarrow{\sim}}
\def \itt #1,#2:{\medskip\item[$\bullet$] %
     page\ \ignorespaces#1, line\ \ignorespaces#2:\ \ignorespaces}
\begin{document}

\begin{abstract}

In this note we study the automorphism group of a smooth Danie-\\lewski surface 
$\Dp= \{(x,y,z) \in \mathbb{A}^3 \mid  xy = p(z) \}   \subset \mathbb{A}^3$, where 
$p \in \mathbb{C}[z]$ is a polynomial without multiple roots and $\deg p \ge 3$.  It is known that two such generic surfaces $\Dp$ and $\Dq$ have isomorphic automorphism groups. Moreover, $\Aut(\Dp)$ is generated by algebraic subgroups and there is a natural  isomorphism $\phi \colon \Aut(\Dp) \xrightarrow{\sim}  \Aut(\Dq)$
which restricts to an isomorphism of algebraic groups $G  \xrightarrow{\sim} \phi(G)$ for any algebraic subgroup $G \subset \Aut(\Dp)$. In contrast, we prove that
$\Aut(\Dp)$ and $\Aut(\Dq)$ are isomorphic as ind-groups if and only if $\Dp \cong \Dq$ as
a variety.
 Moreover, we show that any automorphism of the ind-group $\Aut(\Dp)$ is inner. 
\end{abstract}

\maketitle


\section{Introduction and Main Results}


Our base field  is the field of complex numbers $\mathbb{C}$. For an affine algebraic variety $X$ the group of regular automorphisms 
$\Aut(X)$ has a natural structure of an ind-group (see section  \ref{Preliminaries} for the definition).  In this paper we study the following question.

\begin{question}
Let $X$ and $Y$ be affine irreducible varieties.  Assume that $\Aut(X)$ is generated by algebraic subgroups and there is an abstract isomorphism of groups $\phi: \Aut(X) \xrightarrow{\sim} \Aut(Y)$  
such that $\phi$ preserves algebraic groups, i.e. for any 
algebraic subgroup $G \subset \Aut(X)$, the image $\phi(G)$ is again an algebraic
subgroup of $\Aut(Y)$ and $\phi$ restricts to an isomorphism of algebraic 
groups $G \to \phi(G)$. Is it true that $\Aut(X)$ and $\Aut(Y)$ are isomorphic
as ind-groups?
\end{question}

It is known that for two general ind-groups, the answer  is negative. For instance,  let $W_1$ be the first Weyl algebra i.e.  the quotient of
the free associative algebra  $\mathbb{C} \langle x,y \rangle$ by the relation $xy - yx = 1$ 
 and $P_1$ be the corresponding Poisson algebra i.e.  the polynomial algebra $\mathbb{C}[x,y]$ endowed with the Poisson
 bracket  $\{ f,g \} := f_x g_y - f_y g_x$ for $f,g \in \mathbb{C}[x,y]$. 
 Notice that the automorphism group of any finitely generated algebra (not necessarily commutative) has 
 a natural structure of an ind-group (see \cite{FK15}). In particular, $\Aut(W_1)$ and $\Aut(P_1)$ have natural structures of ind-groups.
 Moreover, there is a natural  isomorphism $\phi: \Aut(W_1) \isorightarrow \Aut(P_1)$  of abstract groups (see \cite[Section 1.1]{BK05}).
 The group $\Aut(W_1)$ is isomorphic to the subgroup 
  $\SAut(\mathbb{A}^2) := \{ (F_1,F_2) \in \Aut(\mathbb{A}^2) |  \det [\frac{\d F_i}{\d x_j}]_{i,j} = 1 \} \subset \Aut(\mathbb{A}^2)$
  (see \cite{Di68}, \cite[Theorem 2]{ML84}).  It  follows from \cite{Jun42} and \cite{Kul53} (see also \cite[Theorem 2]{Kam75})  that  
  $\SAut(\mathbb{A}^2)$ is an 
  amalgamated product  of  the group $G_1 = \SL(2, \mathbb{C})  \ltimes (\mathbb{C}^+)^2$ of special affine transformations of $\mathbb{A}^2$, and the solvable group $G_2$ of polynomial transformations of the form
$$(x,y) \mapsto (\lambda x + F(y), \lambda^{-1}y), \lambda \in \mathbb{C}^*, F \in \mathbb{C}[y].$$
 In fact, it is not difficult to see  that the amalgamated product structure of $\Aut(P_1)$ and $\Aut(W_1)$  implies that the natural isomorphism $\phi$ preserves algebraic groups. On the other hand, 
  in  \cite{BK05}  \name{Belov-Kanel} and  \name{Kontsevich}   noticed that
$\Aut(W_1)$ and $\Aut(P_1)$ are not isomorphic as ind-groups.
Another interesting example comes from a natural isomorphism  $\psi: \Aut(\mathbb{C}\langle x,y \rangle) \isorightarrow \Aut(\mathbb{C}[x,y])$ of 
abstract groups (see \cite{ML70}).
Using the amalgamated product structure of the group $\Aut(\mathbb{C}[x,y])$  as above it is not difficult to see that the
 $\psi$ preserves algebraic groups. However,  \name{Furter} and \name{Kraft} \footnote{in oral communication}  showed 
 (see \cite{FK15})   that  these automorphism groups
 are not isomorphic as ind-groups.

 In both examples at least one group is not the automorphism
  group of a commutative algebra.
 The main result of this paper is a counterexample to the question posted above (see Theorem \ref{main5} and Remark \ref{remark1}).

Following \cite{AFK13}, for any affine variety $X$ we define   $\U(X) \subset \Aut(X)$ as
the subgroup which is generated by  all $\mathbb{C}^+$-actions on $X$. 
 Denote by $\mathbb{A}^2/\mu_2$   the quotient of $\mathbb{A}^2$ by the cyclic group $\mu_2 = \{ \xi \in \mathbb{C}^* | \xi^2 = 1 \}$, 
 where $\mu_2$ acts on $\mathbb{A}^2$  by  scalar multiplication,  and by $T \subset \SL_2:= \SL_2(\mathbb{C})$  
 the standard subtorus of $\SL_2$. The subgroups $\U(\mathbb{A}^2/\mu_2) \subset \Aut(\mathbb{A}^2/\mu_2)$ and
 $\U(\SL_2/T) \subset \Aut(\SL_2/T)$ are closed (see Section \ref{Section3}) and hence, they have the structure of ind-subgroups.
%
%
 In \cite[Proposition 10]{Reg15} it is shown that there is an abstract isomorphism $\phi: \U(\SL_2/T) \isorightarrow \U(\mathbb{A}^2/ \mu_2)$ which preserves algebraic groups. In contrast, we prove the following result.

\begin{theorem}\label{main}
The ind-groups $\U(\SL_2/T)$ and $\U(\mathbb{A}^2/ \mu_2)$ 
are not isomorphic.
\end{theorem}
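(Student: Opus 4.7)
The plan is to argue by contradiction: suppose $\Psi\colon \U(\SL_2/T) \isorightarrow \U(\mathbb{A}^2/\mu_2)$ is an isomorphism of ind-groups. Since $\Psi$ is in particular a morphism of ind-varieties, it differentiates at the identity to a Lie algebra isomorphism $d\Psi\colon \Lie\U(\SL_2/T) \isorightarrow \Lie\U(\mathbb{A}^2/\mu_2)$, and it sends closed algebraic subgroups to closed algebraic subgroups compatibly with the ambient ind-structure. As a first step I would identify the two surfaces as Danielewski surfaces $\{xy = p(z)\}$: namely $\mathbb{A}^2/\mu_2 \cong \{xy = z^2\}$ (singular at the origin) and $\SL_2/T \cong \{xy = p(z)\}$ with $p$ a degree-$2$ polynomial having two distinct roots (smooth), and I would write down explicit generators of the two Lie algebras in terms of the basic LNDs $D_1 = p'(z)\partial_y + x\partial_z$, $D_2 = p'(z)\partial_x + y\partial_z$ together with the standard $\mathbb{C}^*$ of rescalings.

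The key geometric asymmetry to exploit is the singularity of $\mathbb{A}^2/\mu_2$: the origin is a common fixed point of every regular automorphism, while on the smooth surface $\SL_2/T$ the two basic $\mathbb{G}_a$-actions already have disjoint fixed loci, so $\U(\SL_2/T)$ has no common fixed point. The fixed singular point yields a canonical ind-group homomorphism $\rho\colon \U(\mathbb{A}^2/\mu_2) \to \GL(\mathfrak{m}/\mathfrak{m}^2)\cong \GL_3$ and a canonical filtration of $\Lie\U(\mathbb{A}^2/\mu_2)$ by order of vanishing at the singular point, whose graded pieces are finite-dimensional.

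The main obstacle will be to convert this qualitative asymmetry into a genuinely ind-theoretic numerical invariant, since both Lie algebras share many formal features (both arise from Danielewski surfaces of the same degree, both have a rank-one torus, and the LNDs even have the same formulae). My plan is to fix a maximal torus $S\subset \U(\SL_2/T)$, which $\Psi$ sends to a maximal torus since tori are algebraic subgroups preserved under an ind-group isomorphism, and to analyse the centralizer $C(S)$ as an ind-group by cataloguing the $\mathbb{G}_a$-subgroups commuting with $S$. On $\mathbb{A}^2/\mu_2$ every such $\mathbb{G}_a$-subgroup fixes the singular point and is therefore constrained by its image in $\GL_3$, so any $\mathbb{C}^*$-equivariant unipotent one-parameter family factors through a finite-dimensional algebraic piece plus a pro-nilpotent tail controlled by the tangent representation; on $\SL_2/T$ no such constraint holds, and a direct calculation on the smooth Danielewski surface should produce a one-parameter algebraic family of $\mathbb{G}_a$-subgroups centralizing $S$ whose ind-theoretic behaviour is incompatible with any such centralizing family on the other side. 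Producing the explicit obstructing family and verifying that any ind-group isomorphism must respect it is the technical heart of the argument.
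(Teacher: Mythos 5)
Your reduction to Lie algebras and your identification of the decisive geometric asymmetry --- that every locally nilpotent vector field on $\mathbb{A}^2/\mu_2$ must vanish at the singular point, and that the order of vanishing there gives a meaningful filtration --- are exactly the right starting points, and they match the opening of the paper's proof. But the proposal stops short of the one step that actually closes the argument. The paper's proof runs: (i) an ind-group isomorphism induces an isomorphism of the Lie subalgebras generated by all locally nilpotent vector fields, $\lielnd{\SL_2/T}\cong\lielnd{\mathbb{A}^2/\mu_2}$; (ii) $\lielnd{\mathbb{A}^2/\mu_2}$ is \emph{not simple}, because the elements vanishing to order $>1$ at the singular point form a nonzero proper Lie ideal (it misses $x\,\partial/\partial y$, and bracketing with any field --- which necessarily also vanishes at the point --- preserves it); (iii) $\lielnd{\SL_2/T}$ \emph{is simple} --- this is Proposition \ref{simple}, a genuinely nontrivial computation occupying several lemmas, carried out via the correspondence between divergence-free vector fields and functions on the Danielewski surface. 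You never isolate simplicity (or any other completed invariant) as the source of the contradiction; instead you defer to an unexecuted plan involving maximal tori, centralizers, the tangent representation $\GL(\mathfrak{m}/\mathfrak{m}^2)$, and an ``obstructing family'' of $\mathbb{G}_a$-subgroups, and you explicitly flag its construction as the still-missing ``technical heart.'' As it stands the proof is therefore incomplete, and the proposed route is considerably harder than needed: once you have the Lie algebra isomorphism, the vanishing-order ideal you already describe does all the work on the singular side, and the only substantial remaining task is proving simplicity on the smooth side.

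One factual slip: the two basic $\mathbb{G}_a$-actions on $\SL_2/T\cong\{xy=z(z-1)\}$ do \emph{not} have disjoint fixed loci --- $\U_x$ fixes $\{x=0\}$ pointwise and $\U_y$ fixes $\{y=0\}$ pointwise, and these meet in the two points $(0,0,0)$ and $(0,0,1)$. What is true is that $\U(\SL_2/T)$ contains many more $\mathbb{C}^+$-actions (for instance the unipotent subgroups of $\SL_2$ acting on the homogeneous space), which act transitively, so there is no common fixed point; but even this corrected statement is not by itself an invariant of the abstract ind-group, which is precisely why the detour through simplicity of the Lie algebra is needed.
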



We denote by    $\Dp = \{ (x,y,z) \in \mathbb{A}^3 | xy = p(z) \}$ the so-called  \name{Danielewski} surfaces,
where  $p$ is a polynomial in $\mathbb{C}[z]$ with  $\deg p \ge 2$ and $p$ has no multiple roots. Note that $\SL_2/T \cong \Dp$, where $p = z^2 - z$.
In the literature surfaces given by $\lbrace x^ny = p(z)\rbrace \subset \mathbb{A}^3$ are often also called \name{Danielewski} surfaces. In this text we consider only the case where $n = 1$ and the surface is smooth.


%

Denote by  $\Aut^\circ(\Dp)$ the connected component of the neutral element  of the ind-group $\Aut(\Dp)$.  
%
%
%
In order to prove Theorem \ref{main} we show that the  Lie subalgebra $\lielnd{\Dp}\subset \Ve(\Dp)$  generated by all locally nilpotent vector fields on $\Dp$ is
simple (see Proposition \ref{simple}). On the other hand the Lie subalgebra 
$\langle \LND(\mathbb{A}^2/\mu_2) \rangle \subset \Ve(\mathbb{A}^2/\mu_2)$ generated by all locally nilpotent vector fields 
is not simple. If there were an
isomorphism of ind-groups  $\U(\SL_2/T)   \isorightarrow \U(\mathbb{A}^2/ \mu_2)$, 
 then we prove that the Lie algebras $\langle \LND(\SL_2/T) \rangle$ and $\langle \LND(\mathbb{A}^2/\mu_2) \rangle$ would be isomorphic, which is not the case.

If  $\deg p > 2$,   it is proven in  \cite{ML90} that  the group $\Aut^\circ(\Dp)$ equals  $\U(\Dp) \rtimes T$, where 
$T = \{ (tx, t^{-1}y,z) | t \in \mathbb{C}^* \} \subset  \Aut^\circ(\Dp)$ is a one-dimensional subtorus and 
$\U(\Dp)$ is isomorphic to the free product $\mathbb{C}[x] \ast \mathbb{C}[y]$.
Hence, there is a natural isomorphism 
 $\psi: \Aut^\circ(\Dp) \isorightarrow \Aut^\circ(\Dq)$  of abstract groups which preserves algebraic groups
 (as follows from Proposition \ref{groupstructure}).
  On the other hand we prove the following result.

\begin{theorem}\label{main5}
The ind-groups $\Aut^\circ(\Dp)$ and $\Aut^\circ(\Dq)$ are isomorphic if and only if the varieties $\Dp$ and $\Dq$ are isomorphic.
Moreover, if $\deg p \geq 3$, then any isomorphism $\Aut^\circ(\Dp) \xrightarrow{\sim} \Aut^\circ(\Dp)$ of ind-groups is inner.
\end{theorem}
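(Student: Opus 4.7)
The ``if'' direction is immediate: any variety isomorphism $\varphi:\Dp\xrightarrow{\sim}\Dq$ induces the ind-group isomorphism $\Phi_\varphi:\alpha\mapsto\varphi\alpha\varphi^{-1}$. For the converse, let $\Phi:\Aut^\circ(\Dp)\xrightarrow{\sim}\Aut^\circ(\Dq)$ be an ind-group isomorphism. The plan is first to normalize $\Phi$ on a maximal torus via an inner automorphism, and then read off the polynomial $p$ (up to the equivalence classifying variety-isomorphism of Danielewski surfaces) from the induced graded Lie algebra map. Since $\Phi$ preserves algebraic subgroups, it carries maximal tori to maximal tori; because all maximal tori of $\Aut^\circ=\U\rtimes T$ are conjugate, one composes $\Phi$ with a suitable inner automorphism (and, if necessary, with conjugation by the exchange involution $(x,y,z)\mapsto(y,x,z)$ of $\Dq$, to absorb an inversion on $T$) to reduce to the case $\Phi|_T=\id_T$ under a common identification $T\cong\mathbb{C}^*$. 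Then $\Phi$ sends $\mathbb{C}^+$-subgroups to $\mathbb{C}^+$-subgroups, so its tangent map $d\Phi$ restricts to a graded Lie algebra isomorphism $\lielnd{\Dp}\xrightarrow{\sim}\lielnd{\Dq}$. By the Makar--Limanov classification of LNDs on $\Dp$, the weight-$k$ subspace of locally nilpotent vector fields is one-dimensional for every $k\neq 0$, spanned by $x^{k-1}D_1$ for $k\geq 1$ (with $D_1=x\partial_z+p'(z)\partial_y$) and by $y^{-k-1}D_2$ for $k\leq-1$ (with $D_2=y\partial_z+p'(z)\partial_x$). Consequently $d\Phi(x^{k-1}D_1)=\lambda_k\,x^{k-1}D_1^{(q)}$ and $d\Phi(y^n D_2)=\mu_n\,y^n D_2^{(q)}$ for some scalars $\lambda_k,\mu_n\in\mathbb{C}^*$.

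The crux is to turn compatibility with the Lie bracket into a polynomial identity between $p$ and $q$. The archetypal computation $[D_1,D_2]=p''(z)\,(x\partial_x-y\partial_y)$ forces $d\Phi(p''(z)E)=\lambda_1\mu_1\,q''(z)E$, where $E=x\partial_x-y\partial_y$ is the infinitesimal $T$-action (fixed by $d\Phi$). Iterating with the higher brackets $[x^m D_1,y^n D_2]$, whose coefficients are explicit polynomials in $p$, $p'$, and $p''$ and whose $d\Phi$-images encode the analogous polynomials in $q$, I expect to obtain an infinite system of identities that reconciles into a single relation of the form $q(z)=c\,p(az+b)$ with $a,c\in\mathbb{C}^*$, $b\in\mathbb{C}$ --- precisely the criterion for $\Dp\cong\Dq$ as varieties.

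For the ``moreover'' statement, applying the same analysis with $\Dq=\Dp$ will produce, from the scalars $(\lambda_k,\mu_n)$ together with the triple $(a,b,c)$, a unique $\varphi_0\in\Aut(\Dp)$ such that $d\Phi$ agrees with $d\Phi_{\varphi_0}$ on all Lie algebra generators. Since $\Aut^\circ(\Dp)$ is generated as an ind-group by its $\mathbb{C}^+$-subgroups and $T$, this will force $\Phi=\Phi_{\varphi_0}$ after undoing the initial normalizations, exhibiting $\Phi$ as inner. The main technical obstacle I anticipate is the extraction of the relation $q(z)=c\,p(az+b)$ from the infinite tower of bracket constraints, which requires carefully separating the scalar ambiguities $\lambda_k,\mu_n$ that are absorbed in the torus normalization from the genuinely new data encoding the moduli of the isomorphism $\Dp\cong\Dq$.
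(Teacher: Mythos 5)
Your overall architecture matches the paper's: pass to the induced isomorphism of the Lie algebras generated by locally nilpotent vector fields, extract from it a relation of the form $q(z)=c\,p(az+b)$, and then upgrade the Lie-algebra statement back to the group. Your normalization, however, is genuinely different: the paper does not normalize a torus but instead conjugates the image of $\nu_x$ into standard form using the free product structure of $\U(\Dq)$, which forces a delicate degree analysis (Lemma \ref{lemdeg}) to control the conjugating automorphism. Your grading trick would bypass that, but it rests on two assertions you do not prove: that all maximal tori of the ind-group $\U(\Dp)\rtimes T$ are conjugate (plausible via the decomposition $(\U_x\rtimes T)*_T(\U_y\rtimes T)$, but nowhere established and not free in the ind-setting), and that the $T$-homogeneous locally nilpotent vector fields of each fixed weight $k\neq 0$ form a one-dimensional space (this needs the classification of LNDs plus an argument that a homogeneous LND has kernel exactly $\mathbb{C}[x]$ or $\mathbb{C}[y]$, not merely up to conjugation).

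The genuine gap is the step you label ``I expect to obtain'': deriving $q(z)=c\,p(az+b)$ from the tower of bracket identities. This is the heart of the matter and is exactly where the bulk of the paper's work lies (Lemmas \ref{lndstandard} through \ref{pq}): one shows $G(p'(z))=cq'(w)$, that $G((p(z)z^i)')=c(q(w)h_i(w))'$ for some basis $\{h_i\}$ of $\mathbb{C}[w]$, that the identity $\lbrace xz^i,yz^j\rbrace=-(p(z)z^{i+j})'$ forces $h_ih_j=h_{i+j}$ and hence $h_i=(aw+b)^i$ with $h_1$ of degree one, then $G(x^i)=a^{i-1}u^i$, and finally $ca^2q(w)=p(aw+b)$ from $x\nu_x(y^2)=(p^2(z))'$. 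None of this is automatic, and without it you have proved neither direction of the main equivalence beyond the trivial one. For the ``moreover'' part there is a second, smaller gap: knowing that $\Phi$ agrees with conjugation by some $\varphi_0$ on every $\mathbb{C}^+$-subgroup, hence on $\U(\Dp)$, does not immediately give $\Phi=\Phi_{\varphi_0}$ on all of $\Aut^\circ(\Dp)=\U(\Dp)\rtimes T$; the paper needs a separate argument (Proposition \ref{remarkC*}) showing that an ind-group automorphism restricting to the identity on $\U(\Dp)$ must also fix $T$ pointwise. You should supply that step rather than appeal to ``generated by $\mathbb{C}^+$-subgroups and $T$''.
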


\begin{remark}\label{re}
Theorem \ref{main5} has an interesting consequence. Using this,
 one can characterize $\Dp$ by its automorphism group viewed as an ind-group i.e. to prove that for an affine normal irreducible variety $X$, isomorphism of ind-groups $\Aut(X)$ and  $\Aut(\Dp)$ implies that $X \cong \Dp$ as a variety.
\end{remark}

\begin{remark}\label{remark1}
If  $p$ is generic in the sense that no affine 
 automorphism $\alpha$ of $\mathbb{C}$ permutes the roots of $p$,
 then 
 $\Aut(\Dp)$ coincides with $\Aut^\circ(\Dp)\rtimes I$, where $I$ is the group of order two generated by the involution $(x,y,z)\mapsto (y,x,z)$.  
\end{remark}

In order to prove  Theorem \ref{main5} we show that an isomorphism of the ind-groups  $\Aut^\circ(\Dp)$ and $\Aut^\circ(\Dq)$
induces an isomorphism
 of the Lie algebras $\lielnd{\Dp}$ and $\lielnd{\Dq}$ which
 induces  an isomorphism of the \name{Danielewski} surfaces $\Dp$ and $\Dq$ (see Proposition \ref{mainprop}).  
 To prove that any automorphism of $\Aut^\circ(\Dp)$ is inner one uses Theorem \ref{main7}.

 We have the natural  identification between vector fields $\Ve(\Dp)$ and derivations $\Der( \mathcal{O}(\Dp))$ of the ring of 
 regular functions of $\Dp$.
  For
$\psi \in \Aut(\Dp)$ and $\delta \in \Ve(\Dp)$ we define 
$$\Ad(\psi) \delta := (\psi^*)^{-1}  \circ \delta \circ \psi^*$$
 where   we consider $\delta$ as a derivation $\delta:  \mathcal{O}(\Dp) \to  \mathcal{O}(\Dp)$ and  
  $\psi^*:  \mathcal{O}(\Dp) \to  \mathcal{O}(\Dp)$, $f  \mapsto f \circ \psi$, is the co-morphism of $\psi$. It is clear that the action of 
  $\Aut(\Dp)$ on 
 the Lie algebra $\Ve(\Dp)$ restricts to an action on the Lie subalgebra $\lielnd{\Dp} \subset \Ve(\Dp)$ because $\Ad(\psi)$ sends locally nilpotent
 vector fields to locally nilpotent vector fields.
 It turns out that any automorphism  of the Lie algebra $\lielnd{\Dp}$ comes from some automorphism of  $\Dp$.
  Moreover,
  from Lemma  \ref{new1}  and Proposition  \ref{prop8}  it follows that the action of $\Aut(\Dp)$ on $\Ve(\Dp)$  restricts to the action
  of $\Aut(\Dp)$ on the Lie subalgebra  $\Ve^\circ(\Dp) \subset \Ve(\Dp)$ of all vector fields of divergence zero 
  (see Section \ref{divergence0} for  definition). Vice versa, we prove the following result.

\begin{theorem}\label{main7}
If $\deg(p) \geq 3$, then we have: 

$(a)$ Let  $F$ be an automorphism of the Lie algebra $\lielnd{\Dp}$. Then
 $F$ coincides with  $\Ad(\varphi)$ for some automorphism $\varphi : \Dp \isorightarrow \Dp$.

$(b)$ Let  $F$ be an automorphism of the Lie algebra $\Ve^0{\Dp}$. Then
 $F$ coincides with  $\Ad(\varphi)$ for some automorphism $\varphi : \Dp \isorightarrow \Dp$.
\end{theorem}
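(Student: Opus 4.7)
The plan is to prove (a) by extracting a surface automorphism from the combinatorial structure of $\lielnd{\Dp}$, and then to reduce (b) to (a) by showing $\lielnd{\Dp}$ is characteristic in $\Ve^0(\Dp)$. The starting point for (a) is the classification of locally nilpotent derivations on a smooth Danielewski surface: every $D \in \LND(\Dp)$ is of the form $f(x)\delta_1$ or $g(y)\delta_2$, where $\delta_1 = x\,\partial_z + p'(z)\,\partial_y$ (with kernel $\mathbb{C}[x]$) and $\delta_2 = y\,\partial_z + p'(z)\,\partial_x$ (with kernel $\mathbb{C}[y]$). Writing $A_1 := \mathbb{C}[x]\delta_1$ and $A_2 := \mathbb{C}[y]\delta_2$, one has $\LND(\Dp) = A_1 \cup A_2$, with both $A_i$ infinite-dimensional abelian subalgebras of $\lielnd{\Dp}$.

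First I would give an intrinsic, Lie-theoretic characterization of these data. A non-zero $D \in \lielnd{\Dp}$ is an LND if and only if $\ad D$ is locally nilpotent on $\lielnd{\Dp}$ (the forward direction uses that the $\mathbb{C}^+$-flow of $D$ acts algebraically on vector fields; the converse follows from the explicit presentation of $\lielnd{\Dp}$ generated by $A_1$ and $A_2$). Hence $F$ permutes $\LND(\Dp)$, and since $A_1, A_2$ are precisely the two maximal abelian subalgebras contained in $\LND(\Dp)$, $F$ permutes the pair $\{A_1, A_2\}$. Composing if necessary with $\Ad(\sigma)$ for the involution $\sigma : (x,y,z) \mapsto (y,x,z)$, we may assume $F(A_i) = A_i$ for $i = 1,2$. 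To recover $\varphi$, I would use the mixed bracket
\[
[f(x)\delta_1,\,g(y)\delta_2] \;=\; f\,g'\,p'(z)\,\delta_2 \;-\; g\,f'\,p'(z)\,\delta_1 \;+\; f\,g\,[\delta_1,\delta_2],
\]
together with $[\delta_1,\delta_2] = p''(z)\,(x\partial_x - y\partial_y)$. Equivariance of $F$ under this bracket forces $F|_{A_1}$ and $F|_{A_2}$ to arise from a substitution $x \mapsto \alpha x$, $y \mapsto \alpha^{-1} y$, $z \mapsto \beta(z)$ with $p\circ\beta$ proportional to $p$; any such substitution extends uniquely to an automorphism $\varphi$ of $\Dp$. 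By simplicity of $\lielnd{\Dp}$ (Proposition \ref{simple}) and the fact that $\delta_1, \delta_2$ generate this Lie algebra, the identity $F = \Ad(\varphi)$ on the two generators propagates to all of $\lielnd{\Dp}$.

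For (b), we first show that $F$ preserves $\lielnd{\Dp}$: the latter is characterized intrinsically as the subalgebra of $\Ve^0(\Dp)$ generated by its ad-locally-nilpotent elements (semisimple fields such as $x\partial_x - y\partial_y$ fail this criterion, since they act on $\delta_1$ as a nonzero eigenvalue). Part (a) then supplies $\varphi \in \Aut(\Dp)$ with $F|_{\lielnd{\Dp}} = \Ad(\varphi)|_{\lielnd{\Dp}}$, and replacing $F$ by $\Ad(\varphi^{-1})\circ F$ reduces us to $F|_{\lielnd{\Dp}} = \id$. Since the centralizer of $\lielnd{\Dp}$ in $\Ve^0(\Dp)$ is trivial (any divergence-zero field commuting with all LNDs must vanish), this forces $F = \id$, so the original $F$ equals $\Ad(\varphi)$. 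The main obstacle is the reconstruction step of (a): translating the additive Lie-bracket data on $A_1$ and $A_2$ into a multiplicative ring automorphism of $\mathcal{O}(\Dp)$, and in particular recovering the polynomial $p$ up to affine change of coordinate. The hypothesis $\deg p \ge 3$ is used precisely here, to rule out the extra symmetries present in the degenerate case $\Dp \cong \SL_2/T$ of $\deg p = 2$.
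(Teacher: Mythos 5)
Your overall strategy for (a) --- normalize $F$ on the locally nilpotent fields attached to $\U_x$ and $\U_y$, then reconstruct a ring automorphism from the bracket relations --- is the same as the paper's (Theorem~\ref{main7} is deduced from Proposition~\ref{mainprop}, proved via Lemmas~\ref{lndstandard}--\ref{pq}), and your reduction of (b) to (a) via a centralizer argument is essentially the paper's Lemma~\ref{identityonsubalgebra}. But there is a genuine gap at the very first step of (a): the claim $\LND(\Dp)=A_1\cup A_2$ with $A_1=\C[x]\nu_x$, $A_2=\C[y]\nu_y$ is false. Proposition~\ref{groupstructure}(c) only says that every one-parameter unipotent subgroup is \emph{conjugate} into $\U_x$ or $\U_y$; since $\U(\Dp)=\U_x\ast\U_y$ is a free product, $\U_x$ is not normal, and for instance $\Ad(\alpha_{y,y}(1))\nu_x$ is a locally nilpotent field whose kernel is $\C[\,p(z-y)/y\,]\neq\C[x],\C[y]$, hence lies in neither $A_1$ nor $A_2$. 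Consequently $A_1,A_2$ are \emph{not} the only two maximal abelian subalgebras made of locally nilpotent elements --- there are infinitely many conjugates of them --- and you cannot arrange $F(A_1)=A_1$ \emph{and} $F(A_2)=A_2$ simply by composing with $\Ad(\sigma)$. After normalizing $F(\nu_x)\in A_1$, the image $F(\nu_y)$ is a priori only conjugate into $\U_u$ or $\U_v$ by some unknown word in the free product, and controlling that word is the hardest part of the paper's proof: the degree estimates of Lemma~\ref{lemdeg} combined with the constraint $\nu_u^m(G(y))=0$ in Lemma~\ref{xy}. Your proposal skips this entirely, so the argument does not close.

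Two smaller points. First, the step ``equivariance under the mixed bracket forces $F|_{A_1},F|_{A_2}$ to arise from a substitution $x\mapsto\alpha x$, $z\mapsto\beta(z)$ with $p\circ\beta$ proportional to $p$'' is asserted rather than proved; this is exactly the content of the paper's Lemmas~\ref{z}, \ref{xzyz}, \ref{16} and \ref{pq}, which require the function/vector-field duality $i_{\nu_f}\omega_p=df$ and the bracket formula~(\ref{bracketformula}) to carry out, so it is not a triviality. Second, in (b) your characterization of $\lielnd{\Dp}$ inside $\Ve^0(\Dp)$ via ad-locally-nilpotent elements needs the converse implication (ad-locally-nilpotent $\Rightarrow$ locally nilpotent), which you only gesture at; the paper avoids this by proving directly (Lemma~\ref{lndtolnd}) that any Lie algebra homomorphism between these algebras sends locally nilpotent fields to locally nilpotent fields, using the correspondence with functions. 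The centralizer step at the end of your (b) is fine, since $\lielnd{\Dp}$ is an ideal of $\Ve^0(\Dp)$ and an element Poisson-commuting with $u$ and $v$ lies in $\C[u]\cap\C[v]=\C$.
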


This is an analogue to the following result in \cite{KR17} (see also \cite{Reg13}):
each automorphism of the Lie subalgebra 
$\langle \LND(\mathbb{A}^n) \rangle  = \{  f_1 \frac{d}{\d x_1} + ... + f_n \frac{\d}{\d x_n} \in \Ve(\mathbb{A}^n) | 
\frac{d f_1}{\d x_1} + ... + \frac{\d f_n}{\d x_n} = 0  \}  \subset \Ve(\mathbb{A}^n)$ generated by all locally nilpotent vector fields on 
$\mathbb{A}^n$  is induced from an automorphism of 
$\mathbb{A}^n$.

\textbf{Acknowledgement:} 
The authors would like to thank  \name{Hanspeter Kraft}  for fruitful discussions and support during writing this paper. 
The authors would also like to thank \name{Michel Brion}, \name{Frank Kutzschebauch}, \name{Immanuel van Santen} and
 \name{Mikhail Zaidenberg} for helpful discussions and
important remarks.

\section{Preliminaries}\label{Preliminaries}

The notion of an ind-group goes back to \name{Shafarevich} who called these objects infinite dimensional groups, see \cite{Sh66}. We refer to  \cite{Kum02} and \cite{St13}  for basic notations in this context. 

\begin{definition}
By an \emph{ind-variety}  we mean a set $V$ together with an ascending filtration $V_0 \subset V_1 \subset V_2 \subset ... \subset V$ such that the following is satisfied:

(1) $V = \bigcup_{k \in \mathbb{N}} V_k$;

(2) each $V_k$ has the structure of an algebraic variety;

(3) for all $k \in \mathbb{N}$ the subset $V_k \subset V_{k+1}$ is closed in the Zariski  topology.
\end{definition}

A \emph{morphism} between ind-varieties $V = \bigcup_k V_k$ and $W = \bigcup_l W_l$ is a map $\phi: V \to W$ such that for each $k$ there is an $l \in \mathbb{N}$ such that 
$\phi(V_k) \subset W_l$ and that the induced map $V_k \to W_l$ is a morphism of algebraic varieties. \emph{Isomorphisms} of ind-varieties are defined in the usual way.

 Filtrations $V = \bigcup_{k \in N} V_k$ and $V = \bigcup_{k \in N} V'_{k}$ are called \emph{equivalent} if for any $k$ there is an $l$ such that $V_k \subset V'_{l}$ is a closed
 subvariety as well as $V'_{k} \subset V_l$. 

An ind-variety $V = \bigcup_k V_k$ has a natural topology: $S \subset V$ is \emph{closed}, resp. \emph{open}, if $S_k := S \cap  V_k \subset V_k$ is closed, resp. open, for all k. Obviously, a closed subset $S \subset V$ has a natural structure of an ind-variety. It is called an \emph{ind-subvariety}. An ind-variety $V$ is called affine if each algebraic variety $V_k$ is affine. Later on we consider only affine ind-varieties and  for simplicity we call them just ind-varieties.

For an ind-variety $V = \bigcup_{k \in \mathbb{N}} V_k$ we can define the tangent space in $x \in V$ in the obvious way. We have  $x \in V_k$ for $k \ge k_0$, and
$T_x V_k \subset T_x V_{k+1}$ for $k \ge k_0$, and then we define
$$
  T_x V := \li_{k \ge k_0} T_x V_k,
$$
which is a vector space of countable dimension. A morphism $\phi: V \to W$ induces
linear maps $d\phi_x : T_x V \to T_{\phi(x)} W$ for any $x \in X$. Clearly, for a $\mathbb{C}$-vector space $V $ of a  countable dimension and  for any $v \in V$ we have 
$T_v V = V$ in a canonical way.


The \emph{product} of two ind-varieties is defined in the clear way. This allows us to give the following definition.

\begin{definition}
An ind-variety $G$ is called an \emph{ind-group} if the underlying set $G$ is a group such that the map $G \times G \to G$, taking $(g,h) \mapsto gh^{-1}$, is a morphism of ind-varieties.
\end{definition}

Note that any closed subgroup $H$ of $G$, i.e. $H$ is a subgroup of $G$ and is a closed subset, is again an ind-group under the  closed ind-subvariety structure on $G$.
 A closed subgroup $H$ of an ind-group $G$ is an algebraic group if and only if $H$ is an algebraic subset of $G$
 i.e. $H$ is a closed subset of some $G_i$, where $G_1 \subset G_2 \subset ...$ is a filtration of $G$.

An ind-group $G$ is called \emph{connected}  if for each $g \in G$ there is an irreducible curve $C$ and a morphism $C \to G$ whose image contains $e$ and $g$.

If $G$ is an  ind-group, then $T_e G$ has a natural structure of a Lie algebra which will be denoted by $\Lie G$. The structure is obtained by showing that each $A \in T_e G$ defines a unique left-invariant vector field $\delta_A$ on $G$, see 
\cite[Proposition 4.2.2, p. 114]{Kum02}.

The next result  can be found in \cite{St13}.
\begin{proposition}\label{ind-group}
Let $X$ be an affine variety. Then $\Aut(X)$ has a natural structure of an affine ind-group.
\end{proposition}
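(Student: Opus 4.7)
The plan is to exhibit $\Aut(X)$ as a countable ascending union of algebraic varieties, each parametrizing automorphisms together with their inverses of bounded degree, and then check that group multiplication and inversion are morphisms of ind-varieties.

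First I would fix a closed embedding $X \hookrightarrow \mathbb{A}^n$, giving a surjection $\mathbb{C}[x_1, \ldots, x_n] \twoheadrightarrow \mathcal{O}(X)$ with kernel $I$. An endomorphism $\varphi$ of $X$ is determined by the tuple $(\varphi^* x_1, \ldots, \varphi^* x_n) \in \mathcal{O}(X)^n$, and any tuple $(f_1, \ldots, f_n)$ arises in this way provided $g(f_1, \ldots, f_n) = 0$ in $\mathcal{O}(X)$ for every $g \in I$. Using the degree filtration on $\mathbb{C}[x_1, \ldots, x_n]$, set
$$\End_d(X) := \{(f_1, \ldots, f_n) \in \mathcal{O}(X)_{\le d}^n : g(f_1, \ldots, f_n) = 0 \text{ for all } g \in I\},$$
which is a Zariski-closed subset of a finite-dimensional vector space, hence an affine algebraic variety. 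The composition map $\End_d(X) \times \End_e(X) \to \End_{de}(X)$ is a morphism of varieties, since it amounts to polynomial substitution and taking representatives modulo $I$.

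Next I would define
$$\Aut_d(X) := \{\varphi \in \End_d(X) : \exists\, \psi \in \End_d(X) \text{ with } \varphi \circ \psi = \psi \circ \varphi = \id_X\}.$$
Since such a $\psi$ is unique when it exists, the first projection identifies $\Aut_d(X)$ with the closed subvariety $Z_d := \{(\varphi, \psi) \in \End_d(X)^2 : \varphi \circ \psi = \psi \circ \varphi = \id_X\}$, giving $\Aut_d(X)$ the structure of a closed affine subvariety of $\End_d(X)$. The inclusions $\Aut_d(X) \hookrightarrow \Aut_{d+1}(X)$ are closed, and $\Aut(X) = \bigcup_{d \ge 1} \Aut_d(X)$ because every automorphism and its inverse lie in $\End_d(X)$ for $d$ large enough.

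Finally, one checks that multiplication sends $\Aut_d(X) \times \Aut_e(X)$ into $\Aut_{de}(X)$ by the composition estimate above, and that inversion sends $\Aut_d(X)$ into itself by the symmetric role of $\varphi$ and $\psi$ in $Z_d$; both are morphisms of algebraic varieties on each filtration piece, which is exactly what is needed for an ind-group. The main conceptual obstacle is that requiring $\varphi$ alone to be invertible is not manifestly a closed condition in $\End_d(X)$; packaging the inverse as part of the data through $Z_d$ sidesteps this cleanly. One should also remark that any two closed embeddings of $X$ produce equivalent filtrations in the sense of the definition recalled earlier, so the resulting ind-group structure is intrinsically associated to $X$.
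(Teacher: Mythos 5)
The paper does not actually prove this proposition; it only cites Stampfli's thesis \cite{St13}, so there is no in-text argument to compare against. Your construction is the standard one from that reference (and from Furter--Kraft): filter endomorphisms by degree with respect to a closed embedding $X\hookrightarrow\mathbb{A}^n$, and handle invertibility by carrying the inverse along as part of the data via $Z_d=\{(\varphi,\psi): \varphi\circ\psi=\psi\circ\varphi=\id_X\}$. This is the right approach and all the essential steps are present: $\End_d(X)$ is closed in a finite-dimensional vector space, composition is a morphism $\End_d\times\End_e\to\End_{de}$, $Z_d$ is closed in $\End_d(X)^2$, $Z_d$ is closed in $Z_{d+1}$ because $\mathcal{O}(X)_{\le d}$ is a linear (hence closed) subspace of $\mathcal{O}(X)_{\le d+1}$, and multiplication and inversion respect the filtration.

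One assertion in the middle is false, however, and you should excise it: the first projection does \emph{not} give $\Aut_d(X)$ the structure of a \emph{closed} subvariety of $\End_d(X)$. Already for $X=\mathbb{A}^2$ the maps $(x,y)\mapsto(tx,y)$ with $t\neq 0$ lie in $\Aut_1(\mathbb{A}^2)$ (their inverses also have degree $1$), yet their limit $(x,y)\mapsto(0,y)$ as $t\to 0$ is not an automorphism, so the image of $Z_1$ under the first projection is not closed in $\End_1(\mathbb{A}^2)$. This does not damage the argument, because nothing in your proof actually uses closedness of $\Aut_d(X)$ inside $\End_d(X)$: the affine variety structure on $\Aut_d(X)$ is transported from $Z_d$, which is closed in $\End_d(X)^2$, and the only closedness needed for the ind-structure is that of $Z_d$ in $Z_{d+1}$, which you have. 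So the fix is purely a matter of phrasing: define $\Aut_d(X)$ \emph{as} $Z_d$ (or as the image of the injective map $\mathrm{pr}_1$ equipped with the structure making $\mathrm{pr}_1$ an isomorphism onto it), and drop the claim that it is closed in $\End_d(X)$. With that correction, and together with your closing remark that different embeddings yield equivalent filtrations, the proof is complete.
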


By $\Ve(X)$ we denote a Lie algebra of all vector fields on an affine variety $X$.
A vector field $\nu \in \Ve(X)$ is called \emph{locally nilpotent} if the corresponding   derivation $D \in \Der \O(X)$ is locally nilpotent i.e. if for any $f$ there exist $n \in \mathbb{N}$ such that $D^n(f) = 0$. Later on we always identify a vector field on an affine variety $X$ with its corresponding derivation.  By $\lielnd{X}$ we mean a Lie subalgebra of $\Ve(X)$ generated by all
locally nilpotent vector fields.

The next result can be found in \cite[Proposition 4.2.2]{Kum02}.

\begin{proposition}\label{KZ14final}
Let $\phi: G \to H$ be a homomorphism of ind-groups. Then $\phi$ induces 
a homomorphism  $d\phi_{e}  : \Lie G \to \Lie H$ of Lie algebras.
\end{proposition}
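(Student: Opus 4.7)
The plan is to reconstruct the variety $\Dp$ (and with it an automorphism $\varphi$) from the Lie-algebraic data of $\lielnd{\Dp}$ or $\Ve^0(\Dp)$, in the spirit of the analogous result for $\mathbb{A}^n$ in \cite{KR17}. First I would record the explicit classification of LNDs on $\Dp$: up to scalar and the torus $T$-action, every LND on $\Dp$ is of the form $f(x)\,D_1$ or $g(y)\,D_2$, where
$$ D_1 := p'(z)\partial_y + x\partial_z, \qquad D_2 := p'(z)\partial_x + y\partial_z,$$
and $f\in\mathbb{C}[x]$, $g\in\mathbb{C}[y]$. Writing $\mathcal{A}_1 := \mathbb{C}[x]\,D_1$ and $\mathcal{A}_2 := \mathbb{C}[y]\,D_2$, these are abelian Lie subalgebras of $\lielnd{\Dp}$ that generate it (this fits with the amalgamated product structure of $\U(\Dp)$ for $\deg p\ge 3$), and a direct computation gives $[D_1,D_2] = p''(z)(x\partial_x - y\partial_y)$, a nonzero multiple of the semisimple grading field $E := x\partial_x - y\partial_y$.

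For (a), let $F$ be a Lie-algebra automorphism of $\lielnd{\Dp}$. The first structural step is to characterise LNDs intrinsically: I would show that $\delta\in\lielnd{\Dp}$ is an LND if and only if $\ad\delta$ is locally nilpotent on $\lielnd{\Dp}$. One direction is immediate; the converse would be verified by analysing how a putative $\ad$-nilpotent element acts on $\mathcal{A}_1\cup\mathcal{A}_2$ and using simplicity of $\lielnd{\Dp}$ (Proposition \ref{simple}) to rule out pathological cases. Granted this, $F$ sends LNDs to LNDs and maximal abelian subalgebras of LNDs to the same, so up to inner conjugacy $F$ permutes the pair $\{\mathcal{A}_1,\mathcal{A}_2\}$; one distinguishes them (or exchanges them) via their normaliser, which contains a specific semisimple element coming from $[D_1,D_2]$. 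After composing with $\Ad(\iota)$ for the swap $\iota:(x,y,z)\mapsto(y,x,z)$ if needed, and with an inner automorphism from $\U(\Dp)$, one is reduced to the case $F(\mathcal{A}_i) = \mathcal{A}_i$ for $i=1,2$.

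Once $F$ preserves $\mathcal{A}_1$ and $\mathcal{A}_2$, the $\mathbb{C}[x]$-module structure on $\mathcal{A}_1$ is visible Lie-algebraically through the centraliser of $E$, so $F|_{\mathcal{A}_1}$ induces an affine automorphism $x\mapsto \alpha x+\beta$ together with a scalar, and analogously on $\mathcal{A}_2$. The bracket relation $[D_1,D_2] = p''(z)E$ together with the higher brackets $[x^{i}D_1, y^{j}D_2]$ encode the polynomial $p(z)$; matching these against their $F$-images forces the combined affine data to be the restriction of a regular automorphism $\varphi\in\Aut(\Dp)$. Since $\mathcal{A}_1\cup\mathcal{A}_2$ generates $\lielnd{\Dp}$, the equality $F = \Ad(\varphi)$ on the generators propagates to the whole Lie algebra.

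For (b), every LND has divergence zero, so $\lielnd{\Dp}\subseteq \Ve^0(\Dp)$; I would characterise $\lielnd{\Dp}$ intrinsically inside $\Ve^0(\Dp)$ as the subalgebra generated by all locally $\ad$-nilpotent elements. Any automorphism of $\Ve^0(\Dp)$ then preserves $\lielnd{\Dp}$, and part (a) supplies a $\varphi\in\Aut(\Dp)$ with $F|_{\lielnd{\Dp}} = \Ad(\varphi)|_{\lielnd{\Dp}}$. The residual task is to upgrade this agreement to all of $\Ve^0(\Dp)$: this follows from the fact that the centraliser of $\lielnd{\Dp}$ in $\Ve^0(\Dp)$ is trivial (a field commuting with both $D_1$ and $D_2$ must vanish, since their flows have dense combined orbit on $\Dp$). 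The principal obstacle in the entire argument is the first structural step, namely the intrinsic Lie-algebraic characterisation of LNDs and the distinction between the families $\mathcal{A}_1$ and $\mathcal{A}_2$; once these invariants are pinned down, the reconstruction of $\varphi$ from generators and relations is essentially mechanical.
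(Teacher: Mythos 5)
Your proposal does not address the statement it is supposed to prove. Proposition \ref{KZ14final} is a general structural fact about ind-groups: any homomorphism of ind-groups $\phi\colon G \to H$ differentiates at the identity to a homomorphism of Lie algebras $d\phi_e\colon \Lie G \to \Lie H$. What you have written is instead a proof sketch for Theorem \ref{main7} (equivalently Proposition \ref{mainprop}) --- the statement that every automorphism of the Lie algebra $\lielnd{\Dp}$, or of $\Ve^0(\Dp)$, is of the form $\Ad(\varphi)$ for an automorphism $\varphi$ of the surface. None of the ingredients you invoke (the classification of locally nilpotent derivations on $\Dp$, the abelian subalgebras $\mathbb{C}[x]D_1$ and $\mathbb{C}[y]D_2$, the bracket $[D_1,D_2]=p''(z)(x\partial_x - y\partial_y)$, the reconstruction of $p$ from higher brackets) has any bearing on the existence and compatibility of the differential $d\phi_e$ for an arbitrary pair of ind-groups $G$ and $H$.

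A proof of the actual statement would have to work at the level of the definitions in Section \ref{Preliminaries}: one first checks that $d\phi_e$ is well defined as the direct limit of the differentials $T_e G_k \to T_e H_{l(k)}$ coming from the filtrations, and then that it intertwines the Lie brackets, which are defined by associating to each $A \in T_eG$ the left-invariant vector field $\delta_A$ on $G$ and showing that $\phi$-related invariant vector fields have $\phi$-related brackets. The paper does not reprove this; it cites \cite[Proposition 4.2.2]{Kum02}. If you want to supply an argument, that is the route to take; the Danielewski-specific material belongs to the proof of Proposition \ref{mainprop} and should be presented there (where, for what it is worth, the paper proceeds differently from your sketch: rather than characterising LNDs by $\ad$-nilpotency, it uses the correspondence between divergence-free vector fields and functions via the volume form $\omega_p$, normalises $G(x)$ and $G(y)$ using the amalgamated product structure and the degree estimates of Lemma \ref{lemdeg}, and then pins down $p$ and $q$ through Lemmas \ref{z}--\ref{pq}).
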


Since $\Aut(X)$ has a structure of an ind-group for any affine variety $X$, we can define a Lie algebra of $\Aut(X)$.
It is known that there is a homomorphism of Lie algebras  $\psi: \Lie \Aut(X) \to \Ve(X)$ which is injective on each 
$\Lie K \subset \Lie \Aut(X)$, where  $K \subset \Aut(X)$ is an algebraic subgroup. Hence, $\psi$ is injective on the Lie subalgebra 
$\Lie_u(\Aut(X)) := \langle \Lie K | K \subset \Aut(X) 
\text{ is an algebraic subgroup isomorphic  to } \\ \mathbb{C}^+   \rangle \subset \Lie \Aut(X)$ generated by Lie algebras of one-dimensional unipotent subgroups.
The map $\psi$ is injective on $\Lie_u(\Aut(X))$ and the image of  $\Lie_u(\Aut(X))$ under $\psi$ equals $\lielnd{X}$
because any locally nilpotent vector field $\nu \in \Ve(X)$ belongs to   $\psi(\Lie K)$ for some one-dimensional closed unipotent  subgroup 
$K$ of $\Aut(X)$. In fact, one can prove that  $\ker \psi$ is trivial.

Let $X$ and $Y$ be  affine  varieties such  that there is an isomorphism $\phi: \Aut(X) \xrightarrow{\sim} \Aut(Y)$  of ind-groups.
Then isomorphism of Lie algebras  $d\phi_{e}  : \Lie \Aut(X) \xrightarrow{\sim}  \Lie \Aut(Y)$ induces an isomorphism
of Lie subalgebras $\Lie_u(\Aut(X)) \cong \lielnd{X}$  and $\Lie_u(\Aut(Y)) \cong \lielnd{Y}$.
In the future we will always identify $\Lie_u(\Aut(X))$ with $\lielnd{X}$. 

\begin{definition}
 By $\U(X)$ we mean the subgroup of $\Aut(X)$ generated by all closed one-dimensional unipotent subgroups.
  \end{definition}

\section{Automorphisms of \name{Danielewski} surface}\label{Section3}

Let $p \in \mathbb{C}[t]$ be a polynomial of degree $d \ge 2$ with simple roots. Define the \name{Danielewski}-surface $\Dp \subset \mathbb{A}^3$ to be the zero set of the irreducible polynomial
$xy - p(z)$:
$$
\Dp = \{ (x,y,z) \in \mathbb{A}^3  | xy = p(z) \} \subset \mathbb{A}^3.
$$

The following is easy ($\dot{\mathbb{C}}  := \mathbb{C} \setminus \{ 0 \})$:

(a) $\Dp$ is smooth,

(b) the two projections $\pi_x:\Dp  \to \mathbb{C}$, $(x,y,z) \mapsto	 x$  and  $\pi_y: \Dp \to \mathbb{C}$, $(x,y,z) \mapsto y$ are both smooth,

(c) $(\Dp)_x  = \pi_x^{-1}(\dot{\mathbb{C}})   \isorightarrow  \dot{\mathbb{C}} \times \mathbb{C}$,  $(x,y,z) \mapsto (x,z)$    and similarly for  $\pi_y$, 

(d) $\pi_x^{-1}(0)$ is the disjoint union of $d$ copies of the affine line $\mathbb{C}$.
$\\$

For the rest of this section we assume $\de p >2$ unless stated otherwise. 
 For every nonzero $f \in \mathbb{C}[t]$ there is a $\mathbb{C}^+$-action $\alpha_f$ on $\dot{\mathbb{C}} \times \mathbb{C}$ given by $\alpha_f (s)(x, z) := (x, z + f(x))$, i.e. by translation with $f(x)$ in the fiber of $x \in \dot{\mathbb{C}}$. It is easy to see that this action extends to an action on $\Dp$ if and only if $f(0) = 0$. We denote the corresponding actions on $\Dp$ by 
$\alpha_{x,f}$, respectively $\alpha_{y,f}$. The explicit form is
$$
 \alpha_{x,f} (s)(x, y, z) = (x, \frac{p(z + sf (x))}{x} , z + sf (x))
$$
    and similarly for $\alpha_{y,f}$. The projection $\pi_x: \Dp \to \mathbb{C}$ is the quotient for all these actions, and the action on $\pi^{-1}(0)$ is trivial. Note that the corresponding vector fields are given by
$$
\nu_{x,f} := p'(z) \frac{f(x)}{x} \frac{\d }{\d y} + f(x) \frac{\d}{\d z} \text{ and } \nu_{y,f} := p'(z) \frac{f(y)}{y} \frac{\d}{\d x}  + f(y) \frac{\d}{\d z}. 
$$

Denote by $\U_x, \U_y \subset \Aut(\Dp)$ the image of $\alpha_x$ and $\alpha_y$. Note that there is also a faithful $\mathbb{C}^*$-action on $\Dp$ given by $t(x, y, z) := (t  x, t^{-1} · y, z)$ which normalizes $\U_x$ and $\U_y$. Denote by $T \subset \Aut(\Dp)$ the image of $\mathbb{C}^*$. The following result is due to Makar-Limanov.
\begin{proposition} \label{groupstructure}
 $(a)$ The group $\Aut(\Dp)$ is generated by $\U_x,\U_y,T$ and a finite subgroup F which normalizes $ \langle \U_x,\U_y,T \rangle$.

$(b)$ $ \langle \U_x, \U_y, T \rangle \subset \Aut(\Dp)$ is a closed connected subgroup 
of finite index, hence $\langle \U_x, \U_y, T \rangle = \Aut^\circ(\Dp)$. 


$(c)$ $\U(\Dp) = \U_x \ast \U_y$ is a free product   and every one-parameter unipotent subgroup of $\Aut(\Dp)$ is conjugate to a subgroup of $\U_x$ or $\U_y$.

$(d)$ $ \langle \U_x, \U_y, T \rangle = \U(\Dp) \rtimes T$ is a semi-direct product of $\U(\Dp)$ and torus  $T$. 
\end{proposition}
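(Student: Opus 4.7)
The plan is to derive parts (a)--(d) from Makar-Limanov's classification of locally nilpotent derivations on $\mathcal{O}(\Dp)$. The central input is the following: every nonzero LND $D$ of $\mathcal{O}(\Dp)$ satisfies either $\ker D = \mathbb{C}[x]$ and $D = h(x)\nu_{x,x}$ for some $h \in \mathbb{C}[x]$, or the symmetric statement with $x$ and $y$ swapped. To prove this I would use Makar-Limanov's weight-filtration argument: grade $\mathbb{C}[x,y,z]/(xy - p(z))$ so that the leading-term relation $xy - cz^d$ (where $d = \deg p$) is quasi-homogeneous, observe that any LND $D$ induces a nonzero homogeneous LND on the associated graded ring (the toric quotient singularity $\mathbb{A}^2/\mu_d$), classify such homogeneous LNDs---they must kill $x$ or $y$---and lift back to $\mathcal{O}(\Dp)$. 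The explicit computation already carried out in the text then forces $D$ into the claimed shape.

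Once this classification is in hand, every one-parameter unipotent subgroup $U \subset \Aut(\Dp)$ is the exponential of such an LND and therefore is contained (not merely conjugate) in $\U_x$ or $\U_y$. In particular $\U(\Dp) = \langle \U_x, \U_y \rangle$. The free product structure $\U(\Dp) = \U_x \ast \U_y$ will then follow from a ping-pong argument using the vector-space decomposition
$$\mathcal{O}(\Dp) = \mathbb{C}[z] \oplus \bigoplus_{i \ge 1}\bigl(x^i \mathbb{C}[z] \oplus y^i \mathbb{C}[z]\bigr)$$
to track how a reduced alternating word $u_1 v_1 u_2 v_2 \cdots$ with $u_i \in \U_x \setminus \{e\}$ and $v_i \in \U_y \setminus \{e\}$ acts on the coordinate $z$. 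Each $u_i$ adds a nonzero element of $x\mathbb{C}[x]$ to $z$; each $v_j$ then rewrites $y = p(z)/x$ and contributes a term in $y\mathbb{C}[y]$. The resulting polynomial $(u_1 v_1 u_2 \cdots)^\ast(z) - z$ has a leading term whose $x$- or $y$-degree grows strictly through the word, so it is never zero. This simultaneously shows $T \cap \U(\Dp) = \{e\}$ because every torus element fixes $z$.

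For (d), a direct computation shows $T$ normalizes both $\U_x$ and $\U_y$ via $t \cdot \alpha_{x,f}(s) \cdot t^{-1} = \alpha_{x, f_t}(s)$ with $f_t(x) = t^{-1}f(tx)$, so $\langle \U_x, \U_y, T \rangle = \U(\Dp) \rtimes T$. For (a) and (b), any $\varphi \in \Aut(\Dp)$ permutes the one-parameter unipotent subgroups by conjugation, hence permutes the $x$-type/$y$-type partition. After composing with the swap involution $\iota:(x,y,z) \mapsto (y,x,z)$ if necessary, we may assume $\varphi$ stabilizes $\U_x$; then $\varphi$ preserves the ring $\mathbb{C}[x]$ of $\U_x$-invariants and hence descends to an affine transformation of $\mathbb{A}^1 = \Spec \mathbb{C}[x]$, while the relation $\varphi^\ast(xy) = \varphi^\ast(p(z))$ constrains $\varphi^\ast(z)$ to be an affine function of $z$ whose induced action permutes the roots of $p$. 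A direct bookkeeping of these constraints reduces $\varphi$ to an element of $\langle \U_x, \U_y, T\rangle$ up to a finite group $F$ generated by $\iota$ together with the root-permuting affine $z$-substitutions, proving (a). Connectedness in (b) is immediate since $\U_x, \U_y, T$ are irreducible and contain $e$, and finite index---hence closedness, as the identity component---follows because $F$ is finite. The main obstacle throughout is the classification of LNDs in the first step; once that technical input of Makar-Limanov is established, the remaining statements reduce to standard arguments on free products, torus normalization, and the combinatorics of the roots of $p$.
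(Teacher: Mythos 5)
The paper does not actually reprove parts (a), (c), (d): it cites \cite{ML90} for the generation statement and \cite{KL13} for the free--product and semidirect--product structure, and only supplies the short argument for (b) (connected $+$ finite index $\Rightarrow$ closed $\Rightarrow$ identity component). Your proposal attempts to reconstruct the cited material, but its central input is false as stated. It is not true that every nonzero locally nilpotent derivation $D$ of $\mathcal{O}(\Dp)$ has kernel $\mathbb{C}[x]$ or $\mathbb{C}[y]$: for the nontrivial element $g=\alpha_{y,y}(1)\in\U_y$ the derivation $\Ad(g)\nu_x$ is locally nilpotent with kernel $\mathbb{C}\left[\tfrac{p(z-y)}{y}\right]$, and on $\Dp$ one has $\tfrac{p(z-y)}{y}=x-p'(z)+\tfrac{1}{2}p''(z)y-\cdots$, which lies in neither $\mathbb{C}[x]$ nor $\mathbb{C}[y]$. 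The correct classification (Makar--Limanov) is that every LND is conjugate under $\Aut(\Dp)$ (indeed under $\U(\Dp)$) to some $h(x)\nu_x$; the associated--graded argument you sketch only controls the \emph{leading term} of $D$ with respect to the weight filtration, and upgrading that to a statement about $D$ itself is exactly the degree--reduction induction that constitutes the bulk of \cite{ML90} and \cite{KL13}. Note also that your stronger claim contradicts your own part (c): if $u\in\U_x$ and $g\in\U_y$ are nontrivial, then $gug^{-1}$ generates a one--parameter unipotent subgroup which, being given by reduced words of length three in $\U_x\ast\U_y$, is not contained in $\U_x\cup\U_y$. This is precisely why the proposition asserts ``conjugate to a subgroup of $\U_x$ or $\U_y$'' rather than ``contained in''.

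The error propagates into your proof of (a): to conclude that $\varphi$ ``stabilizes $\U_x$'' after composing with the swap $\iota$, you need $\ker\bigl(\Ad(\varphi)\nu_x\bigr)=\varphi^{*-1}(\mathbb{C}[x])$ to equal $\mathbb{C}[x]$ or $\mathbb{C}[y]$, and with the corrected classification this can only be arranged after first composing $\varphi$ with a suitable element of $\U(\Dp)$ --- which is again the missing degree--reduction argument, not a consequence of the graded step alone. The remaining ingredients of your proposal are sound and consistent with what the paper does: the bidegree ping--pong for the free product is the same mechanism as Lemma \ref{lemdeg}, the observation that nontrivial elements of $\U(\Dp)$ move $z$ gives $T\cap\U(\Dp)=\{e\}$, the computation that $T$ normalizes $\U_x$ and $\U_y$ gives (d), and the connectedness/finite--index argument for (b) matches the paper's.
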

\begin{proof}
The fact that $\Aut(\Dp)$ is generated by $\U_x,\U_y,T$ and a finite subgroup is proved in \cite{ML90}. The additional claims in $(a)$, $(c)$ and $(d)$ about the structure of $\Aut(\Dp)$ are claimed in a remark of the same article and proven in \cite{KL13}. 
It is clear that the subgroup $ \langle \U_x, \U_y, T \rangle \subset \Aut(\Dp)$ is connected. Moreover, because it has a finite index, 
it is closed and $\langle \U_x, \U_y, T \rangle = \Aut^\circ(\Dp)$. Hence, $(b)$ follows.
\end{proof}

\begin{proposition}\label{Blanc}
$\Aut(\SL_2/T) = \U(\SL_2/T) \rtimes \mu_2$, where $\mu_2$ denotes a cyclic group of order $2$. In particular, $\Aut^\circ(\SL_2/T) = \U(\SL_2/T)$ is an ind-group.
\end{proposition}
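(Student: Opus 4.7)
The plan is to identify $\SL_2/T$ with the Danielewski surface $\Dp$ for $p(z)=z^{2}-z$ and then determine exactly which automorphisms of $\Dp$ lie in $\U(\SL_2/T)$. Via the $T$-invariants $x=ab$, $y=cd$, $z=ad$ on $\SL_2$, left multiplication by $\SL_2$ on $\SL_2/T$ yields a homomorphism $\SL_2\to\Aut(\SL_2/T)$. A short computation shows that the upper and lower unipotent subgroups $U^{\pm}\subset\SL_2$ act as one-parameter subgroups of $\U_{y}$ and $\U_{x}$, respectively, so $U^{\pm}\subset\U(\SL_2/T)$. Since $\SL_2$ is generated by $U^{+}$ and $U^{-}$, the standard torus $T\subset\Aut(\Dp)$ (acting as $(x,y,z)\mapsto(tx,t^{-1}y,z)$) and the involution $\tau_{1}\colon(x,y,z)\mapsto(y,x,1-z)$ coming from the Weyl element both lie in $\U(\SL_2/T)$.

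Next I would exhibit the involution $\tau_{0}\colon(x,y,z)\mapsto(y,x,z)$ and show that $\tau_{0}\notin\U(\SL_2/T)$. Consider the canonical nowhere-vanishing regular $2$-form $\omega\in\Omega^{2}(\Dp)$ with $\omega=dz\wedge dx/x$ on $\{x\neq 0\}$; every locally nilpotent vector field on $\Dp$ has zero divergence with respect to $\omega$, so the character $\chi\colon\Aut(\Dp)\to\mathbb{C}^{*}$ defined by $\varphi^{*}\omega=\chi(\varphi)\omega$ contains $\U(\SL_2/T)$ in its kernel. A direct calculation using the relation $x\,dy+y\,dx=p'(z)\,dz$ on $\Dp$ gives $\tau_{0}^{*}\omega=-\omega$, so $\chi(\tau_{0})=-1$ and hence $\tau_{0}\notin\U(\SL_2/T)$.

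Finally, I would invoke the Makar-Limanov structure result for $\Aut(\Dp)$ in the degree-$2$ case (the analogue of Proposition \ref{groupstructure}(a); see \cite{ML90}): for $p(z)=z(z-1)$ the group $\Aut(\Dp)$ is generated by $\U_{x},\U_{y},T$ together with the finite subgroup $\{\id,\tau_{0},\tau_{1},\tau_{0}\tau_{1}\}$ accounting for the swap $x\leftrightarrow y$ and for the affine root-permuting symmetry $z\mapsto 1-z$. Since $T$ and $\tau_{1}$ already lie in $\U(\SL_2/T)$, and $\tau_{0}\tau_{1}$ equals $\tau_{0}$ modulo $\tau_{1}$, the quotient $\Aut(\SL_2/T)/\U(\SL_2/T)$ is cyclic of order $2$ generated by the class of $\tau_{0}$. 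Thus $\Aut(\SL_2/T)=\U(\SL_2/T)\rtimes\langle\tau_{0}\rangle$ with $\langle\tau_{0}\rangle\cong\mu_{2}$. The subgroup $\U(\SL_2/T)$ is connected (being generated by the connected subgroups $\U_{x},\U_{y},T$) and of finite index $2$, hence equals $\Aut^{\circ}(\SL_2/T)$ and is closed in the ind-group $\Aut(\SL_2/T)$, inheriting the structure of an ind-group.

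The main obstacle is the structure theorem invoked in the last paragraph: Proposition \ref{groupstructure} is stated only for $\deg p>2$, so one has to verify separately, for the degenerate case $p(z)=z(z-1)$, that the finite quotient of $\Aut(\Dp)$ by the subgroup generated by $\U_{x},\U_{y},T$ is exactly the Klein four-group above, with no extra automorphisms coming from the homogeneous-space structure of $\SL_2/T$.
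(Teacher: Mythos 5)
Your computations in the first two paragraphs are correct and follow a genuinely different route from the paper: the paper's proof of Proposition \ref{Blanc} consists essentially of quoting \cite[Theorem 6]{Lam05}, which asserts that $\Aut(\mathrm{D}_{z(z-1)})$ is generated by the $\mathbb{C}^+$-actions together with a cyclic group of order two permuting the roots of $p$, and then using normality of $\U(\mathrm{D}_{z(z-1)})$ to get the semidirect product. Your identification of the images of $U^{\pm}$, of the torus and of the Weyl element inside $\U(\SL_2/T)$, and your use of the character $\chi$ defined by $\varphi^*\omega_p=\chi(\varphi)\,\omega_p$ to certify $\tau_0\notin\U(\SL_2/T)$, are all sound (every $\mathbb{C}^+$-action preserves $\omega_p$ since $\mathbb{C}^+$ admits no nontrivial character into $\mathbb{C}^*$ and $\mathcal{O}(\Dp)^*=\mathbb{C}^*$). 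This last point even supplies a detail the paper leaves implicit, namely that the generating involution does not already lie in $\U(\SL_2/T)$, without which the product would not be semidirect.

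However, the step you yourself flag as ``the main obstacle'' is a genuine gap, and it is exactly the step that carries the whole proof: you must know that $\Aut(\mathrm{D}_{z(z-1)})$ is generated by $\U_x,\U_y,T$ and the specific finite group you wrote down. Citing \cite{ML90} for this is not legitimate here: Proposition \ref{groupstructure} and the Makar-Limanov description are used in this paper only under the standing hypothesis $\deg p>2$, and the case $\deg p=2$ is genuinely degenerate --- $\mathrm{D}_{z(z-1)}\cong\SL_2/T$ is homogeneous, $\U_x$ and $\U_y$ no longer generate a free product (the Weyl element imposes relations), and the amalgamated structure of the automorphism group changes. The correct input, and the one the paper actually uses, is \cite[Theorem 6]{Lam05}. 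As written, your argument reduces the proposition to precisely the external generation statement whose absence you acknowledge, so it does not close. Replacing the appeal to \cite{ML90} by \cite[Theorem 6]{Lam05} (or reproving Lamy's statement) repairs the proof, after which your finer analysis of which involutions lie in $\U(\SL_2/T)$ becomes a useful supplement to the paper's argument rather than a substitute for it.
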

\begin{proof}
It is clear that $\SL_2/T \cong \mathrm{D}_{z(z-1)}$.  Note that for any two polynomials $p,q \in \mathbb{C}[z]$ of degree $2$ without multiple roots,  we have  $\Dp \cong \Dq$. 
It follows from \cite[Theorem 6]{Lam05} that $\Aut(\mathrm{D}_{z(z-1)})$ is generated by $\mathbb{C}^+$-actions and cyclic subgroup  $\mu_2$ of order $2$ which permute roots $\{ a, b \}$ of $p$,  i.e 
$\Aut(\mathrm{D}_{z(z-1)}) = \langle  \U(\mathrm{D}_{z(z-1)}), \mu_2 \rangle $.
Because $\U(\mathrm{D}_{z(z-1)})$ is normal  subgroup of $\Aut(\mathrm{D}_{z(z-1)})$,  we have  
$\Aut(\mathrm{D}_{z(z-1)}) = \U(\mathrm{D}_{z(z-1)}) \rtimes \mu_2$. 

From the decomposition $\Aut(\SL_2/T) = \U(\SL_2/T) \sqcup g \U(\SL_2/T)$, where $g \in \mu_2$ is a non-neutral element, it follows that
$\Aut^\circ(\SL_2/T) \subset \U(\SL_2/T)$. Now, the second statement follows from the fact that $\U(\SL_2/T)$ is generated by $\mathbb{C}^+$-actions.
\end{proof}

Note that $\U(\mathbb{A}^2/\mu_2) \subset \Aut(\mathbb{A}^2/\mu_2)$ is a closed subgroup (see \cite[page 9]{Reg15}). Hence, $\U(\mathbb{A}^2/\mu_2)$ has the natural structure of an ind-group.

\section{Proof of the Main Results}
We denote by  $\lielnd{\mathbb{A}^2/\mu_2} $ the Lie subalgebra of $\Ve(\mathbb{A}^2/\mu_2)$ generated by all locally nilpotent vector fields on $\mathbb{A}^2/\mu_2$.

\begin{proof}[Proof of  Theorem \ref{main}]
 Assume there is an isomorphism $\phi: \U(\SL_2/T) \xrightarrow{\sim} \U(\mathbb{A}^2/\mu_2)$ of ind-groups.
Then it induces an isomorphism $d \phi_{e} : \Lie \U(\SL_2/T) \xrightarrow{\sim} \Lie \U(\mathbb{A}^2/\mu_2)$  of  Lie algebras, and because $\phi$ maps each closed unipotent subgroup $U \cong \mathbb{C}^+$ to $\phi(U) \cong \mathbb{C}^+$, $d \phi_{e}$ induces an isomorphism of Lie algebras $\lielnd{ \SL_2/T}$ and $ \lielnd{\mathbb{A}^2/\mu_2}$.

By Proposition \ref{simple}, $\lielnd{\SL_2/T}$ is simple.
On the other hand,  we claim that $\lielnd{\mathbb{A}^2/\mu_2}$ is not simple. Indeed, since $\mathbb{A}^2/\mu_2$ has an isolated singular point $s$,  each vector field, which comes from  an  algebraic group action, vanishes at this singular point. In particular, each locally nilpotent vector field vanishes at $s$.    Because $\lielnd{\mathbb{A}^2/\mu_2}$ is generated by locally nilpotent vector fields, each  $\nu \in  \lielnd{\mathbb{A}^2/\mu_2}$ vanishes at $s$.
Let $I \subset  \lielnd{\mathbb{A}^2/\mu_2}$ be a Lie subalgebra generated by those vector fields which vanish at $s$ with multiplicity $k >1$. It is clear that
$I \neq  \lielnd{\mathbb{A}^2/\mu_2}$ because $x \dy \in \lielnd{\mathbb{A}^2/\mu_2} \setminus I$.  Moreover, it is easy to see that $[\nu,\mu] \in I$
for any $\nu \in I$ and any $\mu \in \lielnd{\mathbb{A}^2/\mu_2}$  which shows that $I$ is an ideal.  The claim follows.
%
%
%
%
%
\end{proof}

\begin{proof}[Proof of Theorem \ref{main5}]
Assume there is an isomorphism $\phi: \Aut^\circ(\Dp) \xrightarrow{\sim} \Aut^\circ(\Dq)$ of ind-groups.
Then it induces an isomorphism $d \phi_{e} : \Lie \Aut^\circ(\Dp) \xrightarrow{\sim} \Lie \Aut^\circ(\Dq)$  of  Lie algebras, and because $\phi$ maps each closed unipotent subgroup $H \cong \mathbb{C}^+$ to $\phi(H) \cong \mathbb{C}^+$, $d \phi_{e}$ induces an isomorphism of Lie algebras $\lielnd{\Dp}$ and $\lielnd{\Dq}$. Now  from Proposition \ref{mainprop}(c) it follows that $\Dp \cong \Dq$.

Let now $\Dq = \Dp$ and  $\phi: \Aut^\circ(\Dp) \xrightarrow{\sim} \Aut^\circ(\Dp)$ be an automorphism of an ind-group
$ \Aut^\circ(\Dp)$. 
The map $d \phi_{e}$ induces an automorphism of the Lie algebra $\lielnd{\Dp}$. 
By Proposition \ref{mainprop}, $d \phi_{e} \nu = \Ad(F)  \nu$, where $\nu$ is a locally nilpotent vector field
and $F: \Dp \xrightarrow{\sim} \Dp$ is an automorphism of the variety.
This implies that $\phi(h) = F^{-1} \circ h \circ F$ for any $h \in H$, where $H \subset \Aut^\circ(\Dp)$ is a closed subgroup isomorphic  
to $\mathbb{C}^+$. Hence, the restriction of  $\phi$ to  $\U(\Dp)$ is an inner automorphism. Now the claim follows from Proposition \ref{remarkC*}.
%
%
\end{proof}

Let $\theta: X \xrightarrow{\sim}  Y$ be an isomorphism of affine varieties.
For $\delta \in \Ve(X)$ we define
$$\Ad(\theta) \delta = (\theta^*)^{-1} \circ \delta \circ \theta^*,$$
where we consider $\delta$ as a derivation $\delta:  \mathcal{O}(X) \to  \mathcal{O}(X)$ and  
$\theta^* :  \mathcal{O}(Y) \to  \mathcal{O}(X)$, 
$f  \mapsto f \circ \theta$, is the co-morphism of $\theta$. 
 It is clear that $\Ad(\theta): \Ve(X) \xrightarrow{\sim}  \Ve(Y)$  is an isomorphism of Lie algebras. 
 Moreover, because  $\Ad(\theta)$ sends locally nilpotent vector fields to locally nilpotent, $\Ad(\theta)$ induces an isomorphism of Lie algebras $\langle \LND(X) \rangle \subset \Ve(X)$ and 
$\langle \LND(Y) \rangle \subset \Ve(Y)$.

The next crucial result we will prove in  Section \ref{last}. By $\Ve^0{\Dp}$ we denote the Lie subalgebra of $\Ve{\Dp}$ of
 volume preserving vector fields (see Section \ref{Duality.}).

\begin{proposition}\label{mainprop}
$(a)$ Let  $F: \lielnd{\Dp} \ \isorightarrow \ \lielnd{\Dq}$ be an isomorphism of Lie algebras and let $\deg(q) \geq 3$. 
Then $F$ is equal to $\Ad(\varphi_F)$, where $\varphi_F : \Dp \isorightarrow \Dq$ is an isomorphism of varieties.

$(b)$ Let  $F: \Ve^0{\Dp} \ \isorightarrow \ \Ve^0{\Dq}$ be an isomorphism of Lie algebras and let $\deg(q) \geq 3$. 
Then $F$ is equal to $\Ad(\varphi_F)$, where $\varphi_F : \Dp \isorightarrow \Dq$ is an isomorphism of varieties.

$(c)$ If Lie algebras $\lielnd{\Dp}$ and $\lielnd{\Dq}$ are isomorphic then the varieties $\Dp$ and $\Dq$ are isomorphic.
\end{proposition}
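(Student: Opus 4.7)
The plan is to reconstruct the variety $\Dp$ from the Lie algebra $\lielnd{\Dp}$ by identifying the relevant geometric objects purely in Lie-algebraic terms. First I would give an intrinsic characterization of the locally nilpotent vector fields inside $\lielnd{\Dp}$, for instance as the elements $\nu$ sitting in some infinite-dimensional abelian subalgebra of $\lielnd{\Dp}$ consisting entirely of elements $\mu$ for which $\ad(\mu)$ is locally nilpotent; in either formulation $F$ automatically permutes LNDs. By Proposition~\ref{groupstructure}(c) every LND on $\Dp$ is conjugate to one of the $\nu_{x,f}$ or $\nu_{y,f}$, hence the two maximal abelian subalgebras of LNDs
\[
\mathfrak{n}_x := \spa\{\nu_{x,f}\mid f\in t\mathbb{C}[t]\},\qquad \mathfrak{n}_y := \spa\{\nu_{y,f}\mid f\in t\mathbb{C}[t]\}
\]
and their conjugates form exactly two $\Aut(\Dp)$-orbits in the set of maximal abelian LND-subalgebras. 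The isomorphism $F$ either preserves this bipartition or swaps the two classes.

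Composing $F$ with a suitable $\Ad(\psi)$, $\psi\in\Aut(\Dp)$ --- the free-product structure $\U(\Dp)=\U_x\ast\U_y$ together with the $T$-action provides enough inner automorphisms to move any pair of such maximal abelian subalgebras onto the standard pair --- I may assume $F(\mathfrak{n}_x^{\Dp})=\mathfrak{n}_x^{\Dq}$ and $F(\mathfrak{n}_y^{\Dp})=\mathfrak{n}_y^{\Dq}$. The common kernel of $\mathfrak{n}_x$, viewed as a family of derivations of $\mathcal{O}(\Dp)$, equals $\mathbb{C}[x]$, and similarly for $\mathfrak{n}_y$, so $F$ induces $\mathbb{C}$-algebra isomorphisms $\mathbb{C}[x]\to\mathbb{C}[x']$ and $\mathbb{C}[y]\to\mathbb{C}[y']$. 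A direct computation of the bracket $[\nu_{x,f},\nu_{y,g}]$ and of its iterated commutators with elements of $\mathfrak{n}_x$ and $\mathfrak{n}_y$ --- whose $\d/\d z$- and $\d/\d x$-components carry the data of $p'(z)$ and $p''(z)$ --- together with the matching computation on $\Dq$, forces $\deg p=\deg q$ and pins down the $z$-coordinate and the polynomial $p$ up to an affine reparametrisation $z\mapsto\alpha z+\beta$ (with $p$ transforming accordingly). This produces an isomorphism $\varphi_F\colon\Dp\isorightarrow\Dq$, and a direct check on the generators $\nu_{x,t^k}$ and $\nu_{y,t^k}$ yields $F=\Ad(\varphi_F)$, proving (a); statement (c) is immediate.

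For part (b), Lemma~\ref{new1} and Proposition~\ref{prop8} identify $\Ve^0{\Dp}$ with $\lielnd{\Dp}$ extended by the single semisimple element $x\dx-y\dy$. Since LNDs inside $\Ve^0{\Dp}$ are still internally detectable (for instance as those $\delta$ whose $\ad$-action is locally nilpotent on the derived subalgebra), $F$ restricts to an isomorphism $\lielnd{\Dp}\isorightarrow\lielnd{\Dq}$ and we reduce to (a); the image of the semisimple direction is then determined up to the already-constructed inner action via its weight decomposition on $\lielnd{\Dq}$.

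The main obstacle will be the reconstruction step: the bracket data on $\mathfrak{n}_x\oplus\mathfrak{n}_y$ alone knows only the kernels $\mathbb{C}[x]$ and $\mathbb{C}[y]$, so one must carefully analyse the higher-order terms of $[\nu_{x,f},\nu_{y,g}]$ and its iterated brackets to extract both the $z$-coordinate and the polynomial $p$. This is where the assumption $\deg q\geq 3$ is essential: in degree $2$ the surface $\SL_2/T$ admits the extra symmetries discussed in Proposition~\ref{Blanc}, which would otherwise obstruct the matching of coordinates.
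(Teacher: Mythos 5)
Your overall strategy (normalize $F$ by composing with inner automorphisms $\Ad(\varphi)$ until the two distinguished abelian subalgebras of locally nilpotent fields are in standard position, then read off $p$ and the coordinates from bracket data) is the same as the paper's, but the two steps that carry essentially all of the difficulty are asserted rather than proved. The first is the simultaneous standardization of the pair $\bigl(F(\mathfrak{n}_x),F(\mathfrak{n}_y)\bigr)$. Proposition~\ref{groupstructure}(c) lets you conjugate a \emph{single} locally nilpotent field (equivalently, one maximal abelian subalgebra) into $\U_u$ or $\U_v$; once you have spent that conjugation to arrange $F(\nu_x)=f'(u)\nu_u$, there is no a priori reason that the remaining freedom moves $F(\mathfrak{n}_y)$ onto $\mathfrak{n}_v$ as well. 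Closing exactly this gap is the technical heart of the paper: one first uses $\nu_x^{m}(y)=0$ (with $m=\deg p+1\le\deg q+1$) to bound the min-degree in $u$ of the function $G(y)$, and then Lemma~\ref{lemdeg} --- a delicate min/max-degree induction over reduced words in the free product $\U_u\ast\U_v$ --- shows that the only conjugates of $\nu_v$ compatible with that bound are $\nu_v$ itself or one explicit one-step conjugate. ``The free-product structure together with the $T$-action provides enough inner automorphisms'' is precisely the claim that has to be proved, and it is not a formal consequence of the free product.

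The second gap is the assertion that $F$ ``induces $\mathbb{C}$-algebra isomorphisms $\mathbb{C}[x]\to\mathbb{C}[x']$.'' $F$ acts on vector fields; the induced map $G$ on functions comes from the volume-form duality $\nu_f\leftrightarrow f$ of Propositions~\ref{prop8} and~\ref{correspondents} (which your proposal never invokes), and that map is only $\mathbb{C}$-linear a priori. Its behaviour on products is exactly what the paper must extract by hand from the bracket formula (\ref{bracketformula}) in Lemmas~\ref{z}--\ref{16}, and the answer $G(x^i)=a^{i-1}u^i$ shows $G$ is \emph{not} multiplicative on the nose --- the constants $a,b,c$ it produces are what eventually assemble into the isomorphism $ca^2q(w)=p(aw+b)$ of Lemma~\ref{pq}. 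Without this computation your ``reconstruction step'' has no starting point. Two smaller issues: your intrinsic characterizations of locally nilpotent elements (via abelian subalgebras of ad-locally-nilpotent elements) are themselves unproved claims, whereas the paper derives ``LND $\mapsto$ LND'' from the function correspondence (Lemma~\ref{lndtolnd}); and in part (b) the quotient $\Ve^0(\Dp)/\lielnd{\Dp}$ has dimension $\deg p-2$ by Proposition~\ref{correspondents}, so for $\deg p>3$ it is not spanned by the single element $x\dx-y\dy$ --- the paper instead proves directly (Lemma~\ref{identityonsubalgebra}) that an isomorphism of $\Ve^0$ is determined by its restriction to $\lielnd{\cdot}$.
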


\begin{proof}[Proof of Theorem \ref{main7}]
The proof follows from Proposition \ref{mainprop}.
\end{proof}

\section{Module of differentials and vector fields}

Since $\Dp$ is smooth, the differentials $\Omega(\Dp)$ and the vector fields $\Ve(\Dp) \isorightarrow \Hom(\Omega (\Dp), \mathcal{O}(\Dp))$ are locally free
$\mathcal{O}(\Dp)$-modules, and then, projective. More precisely, we have the following description.

\begin{proposition}\label{proposition2}
	$(a)$ The module $\Omega(\Dp)$ of differentials is projective of rank $2$ and  is  generated by $dx, dy, dz$, with the unique relation $ydx + xdy - p'(z)dz = 0$.

$(b)$ The module $\Omega^2(\Dp) := 􏰬\bigwedge^2 \Omega(\Dp)$ is free of rank one  and  is   generated by 
$$
\omega_p:= \frac{1}{x} dx\wedge dz= -\frac{1}{y}dy \wedge dz= \frac{1}{p'(z)} dx\wedge dy.
$$
\end{proposition}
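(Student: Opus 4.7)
The plan is to derive both parts from the conormal (second fundamental) exact sequence for the closed immersion $\Dp \hookrightarrow \mathbb{A}^3$, combined with the fact, noted at the beginning of Section~\ref{Section3}, that $\Dp$ is smooth of dimension two.

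For part $(a)$, I would start from the standard exact sequence
$$
0 \longrightarrow I/I^2 \longrightarrow \Omega(\mathbb{A}^3) \otimes_{\mathcal{O}(\mathbb{A}^3)} \mathcal{O}(\Dp) \longrightarrow \Omega(\Dp) \longrightarrow 0,
$$
where $I = (f)$ with $f = xy - p(z)$. Exactness on the left holds because $\Dp$ is smoothly embedded in the smooth ambient space $\mathbb{A}^3$. The middle term is free on $dx, dy, dz$ over $\mathcal{O}(\Dp)$, and the conormal module $I/I^2$ is cyclic, generated by the class of $f$, which is sent to $df = y\, dx + x\, dy - p'(z)\, dz$. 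Since $df$ is nonzero and $\mathcal{O}(\Dp)$ is an integral domain, $I/I^2$ is a nonzero cyclic torsion-free module, hence free of rank~$1$. This yields the presentation
$$
0 \longrightarrow \mathcal{O}(\Dp) \xrightarrow{\;1 \mapsto (y,\,x,\,-p'(z))\;} \mathcal{O}(\Dp)^3 \longrightarrow \Omega(\Dp) \longrightarrow 0,
$$
which simultaneously gives the generating set $\{dx, dy, dz\}$, the uniqueness of the relation $y\,dx + x\,dy - p'(z)\,dz = 0$, and projectivity of rank~$2$ (the latter since $\Omega(\Dp)$ is locally free of rank $\dim \Dp = 2$).

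For part $(b)$, as $\Omega(\Dp)$ is locally free of rank~$2$, the top exterior power $\Omega^2(\Dp)$ is a line bundle. I would prove it is trivial by exhibiting $\omega_p$ as a global nowhere-vanishing section. The agreement of the three displayed expressions is obtained by wedging the relation from $(a)$ with $dz$, $dx$, $dy$ in turn: wedging with $dz$ yields $y\, dx \wedge dz = -x\, dy \wedge dz$, hence $\tfrac{1}{x} dx \wedge dz = -\tfrac{1}{y} dy \wedge dz$; wedging with $dx$ gives the identity with $\tfrac{1}{p'(z)} dx \wedge dy$; the remaining identity is analogous. So $\omega_p$ is well-defined and consistent on the three principal opens $U_x = \{x \neq 0\}$, $U_y = \{y\neq 0\}$, $U_{p'} = \{p'(z) \neq 0\}$.

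The main step, the one that really exploits the hypothesis on $p$, is to verify that $U_x \cup U_y \cup U_{p'} = \Dp$. A common zero of $x$, $y$ and $p'(z)$ on $\Dp$ would satisfy $p(z) = xy = 0$ together with $p'(z) = 0$, so $z$ would be a multiple root of $p$, which the hypothesis on $p$ forbids; this is precisely the computation that also guarantees smoothness of $\Dp$. On each of $U_x$, $U_y$, $U_{p'}$, solving the relation in $(a)$ for one of the three differentials shows that the remaining two form a free basis of the restriction of $\Omega(\Dp)$ to that open, so the corresponding expression for $\omega_p$ is a local generator of $\Omega^2(\Dp)$. Hence $\omega_p$ is globally nowhere vanishing and generates $\Omega^2(\Dp)$ freely.
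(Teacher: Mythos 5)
Your proof is correct and follows essentially the same route as the paper: the paper likewise presents $\Omega(\Dp)$ as the free module on $dx,dy,dz$ modulo the single relation $d(xy-p(z))=0$ and proves $(b)$ by checking that the three expressions for $\omega_p$ glue to a nowhere vanishing section over the cover by $D_x$, $D_y$, $D_{p'(z)}$, which covers $\Dp$ because $p$ has simple roots. The only difference is that you justify the presentation and the uniqueness of the relation via the conormal exact sequence, a step the paper dismisses as ``easy to see.''
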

\begin{proof}

(a) From above it is clear that $\Omega(\Dp)$ is the projective module of rank $2 = \dim(\Dp)$.
It is easy to see that $\Omega(\Dp) = (\mathcal{O}(\Dp)dx \oplus \mathcal{O}(\Dp)dy \oplus \mathcal{O}(\Dp)dz)/  (ydx   + xdy - p'(z)dz)$, where $ydx   + xdy - p'(z)dz = d(xy - p(z))$.
In fact, the surface $\Dp$ is covered by the special open sets $D_x,D_y,D_{p'(z)}$
and $\Omega(\Dp)$  is free module of rank  two over these open sets, generated by $(dx,dz)$, by $(dy,dz)$, and by $(dx, dy)$, respectively.

(b)  
The three expressions are well-defined in the special open sets $D_x, D_y, D_{p'(z)}$, respectively, and the relation $ydx + xdy - p'(z)dz = 0$ implies that they coincide on the intersections. Thus $\omega_p$ is a nowhere vanishing section of $\Omega^2(\Dp)$ and therefore, $\Omega^2(\Dp)$ is free of rank 1 (see also \cite[Section 3]{KK10} for details). 
\end{proof}

\begin{remark}
 In fact, for any normal hypersurface $X \subset \mathbb{C}^n$, $\Omega^{n-1}(X) := \bigwedge^{n-1} \Omega(X)$ is a free module of rank one.
\end{remark}

\begin{remark}
 Note that there is no $\delta \in \Ve(\Dp)$ such that $\delta: \mathcal{O}(\Dp) \to \mathcal{O}(\Dp)$ is surjective  because $\Omega(\Dp)$ is not free. Note also that $\omega_p$ is unique up to a constant   because 
$\mathcal{O}(\Dp)^* = \mathbb{C}^*$.
\end{remark}

It is well-known that every vector field $\delta$ on $\Dp \subset \mathbb{C}^3$ extends to a vector field $\tilde{\delta}$ on $\mathbb{C}^3$. It follows that $\delta$ can be written in the form
$$
\delta=a \dx +b\dy +c \dz,
$$
where $a,b,c \in \mathcal{O}(\Dp)$  such that $ay + bx - cp'(z) = 0$ in $\mathcal{O}(\Dp)$. In fact, considering $\delta$ as a $\mathcal{O}(\Dp)$-linear map $\Omega(\Dp) \to \mathcal{O}(\Dp)$, we have $a = \delta(dx)$, $b = \delta(dy)$  and $c = \delta(dz)$.  This presentation of $\delta$ is unique.

\begin{remark} \label{notation-vector-fields}
In fact, 
the vector fields $\Ve(\Dp)$ form a  module over $\mathcal{O}(\Dp)$ of rank $2$, generated by
$$
\nu_z:=x \dx -y \dy, \; \; \nu_x:=p'(z)\dy +x\dz, \; \; \nu_y:=p'(z)\dx +y\dz
$$
with the unique relation $x\nu_y - y\nu_x = p'(z)\nu_z$. 
\end{remark}

The next result is clear.
\begin{proposition}\label{proposition4}
 The sequence 
$$
0 \to \mathbb{C} \to \mathcal{O}(\Dp) \xrightarrow{d} d \Omega(\Dp) \xrightarrow{d} d \Omega^2(\Dp) \to 0
$$
is exact.
\end{proposition}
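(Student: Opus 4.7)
The plan is to verify the claim by recognizing the sequence as the algebraic de Rham complex of $\Dp$ and checking exactness at each term using the explicit presentations of $\Omega(\Dp)$ and $\Omega^2(\Dp)$ from Proposition~\ref{proposition2}, together with the affine open cover $\Dp=D_x\cup D_y\cup D_{p'(z)}$ over which both modules become free.

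The two outer exactness statements are elementary. Injectivity of $\mathbb{C}\hookrightarrow\mathcal{O}(\Dp)$ is automatic, and if $f\in\mathcal{O}(\Dp)$ satisfies $df=0$ in $\Omega(\Dp)$, then unwinding the unique relation $y\,dx+x\,dy-p'(z)\,dz=0$ from Proposition~\ref{proposition2}(a) forces the formal partials $\partial f/\partial x$, $\partial f/\partial y$, $\partial f/\partial z$ to vanish in $\mathcal{O}(\Dp)$; since $\Dp$ is smooth and connected this gives $f\in\mathbb{C}$. For the right end, every element of $\Omega^2(\Dp)$ has the form $g\omega_p$ with $g\in\mathcal{O}(\Dp)$, and since $\omega_p$ restricts to a product of two coordinate differentials on each of $D_x,D_y,D_{p'(z)}$ (for example $\omega_p=\frac{1}{x}dx\wedge dz$ on $D_x$), one can produce local primitives of $g\omega_p$ and splice them via the syzygy among $dx,dy,dz$.

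The main step is exactness at $\Omega(\Dp)$, i.e.\ that every closed $1$-form on $\Dp$ is exact. I would write $\omega=a\,dx+b\,dy+c\,dz$ subject to $ay+bx-cp'(z)=0$, expand $d\omega=0$ as a single identity in $\mathcal{O}(\Dp)\cdot\omega_p$, and then use the decomposition $\mathcal{O}(\Dp)=\mathbb{C}[x,z]\oplus y\cdot\mathbb{C}[y,z]$ arising from $xy=p(z)$ to integrate $\omega$ separately on $D_x$ and on $D_y$, which restrict to $\dot{\mathbb{C}}\times\mathbb{C}$ by the observation recorded in Section~\ref{Section3}. The technical heart of the argument --- and the main obstacle --- is the compatibility check showing that the two local primitives differ by a constant on $D_x\cap D_y$. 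This is exactly the vanishing of the connecting map in the Mayer-Vietoris sequence for $\Dp=D_x\cup D_y$, which in turn uses the simple-roots hypothesis on $p$: it guarantees that the classes of $dx/x$ and $dy/y$ restrict independently to $H^1(D_x\cap D_y)$, forcing $H^1_{dR}(\Dp)=0$.
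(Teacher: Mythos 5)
The paper offers no argument for this statement (it is introduced with ``The next result is clear''), so I can only assess your proposal on its own terms, and it contains one genuine error, at the right-hand end. The map $d\colon \Omega(\Dp)\to\Omega^2(\Dp)$ is \emph{not} surjective: by Grothendieck's comparison theorem the algebraic de Rham cohomology of the smooth affine surface $\Dp$ computes singular cohomology, and $H^2(\Dp;\mathbb{C})\cong\mathbb{C}^{\deg p-1}\neq 0$ for $\deg p\ge 2$. Your own Mayer--Vietoris bookkeeping already detects this: $H^1_{dR}(D_x\cap D_y)$ has dimension $\deg p+1$ (spanned by $dx/x$ and the classes $dz/(z-a_i)$ over the roots $a_i$ of $p$), while the restrictions from $D_x$ and $D_y$ span only the $2$-dimensional subspace generated by $[dx/x]$ and $[p'(z)dz/p(z)]$; the cokernel is exactly the obstruction to splicing your local primitives of $g\omega_p$, so the step ``produce local primitives and splice them via the syzygy'' fails for most $g$ (for instance, $\omega_p$ itself is not exact). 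The statement must therefore be read with $d\Omega^2(\Dp)$ denoting the image $d(\Omega(\Dp))\subset\Omega^2(\Dp)$, so that the last arrow is surjective by definition; the actual content --- and all that is used later, in the proof of Proposition \ref{prop8} --- is exactness at $\mathcal{O}(\Dp)$ and at $\Omega(\Dp)$, i.e. $H^0_{dR}(\Dp)=\mathbb{C}$ and $H^1_{dR}(\Dp)=0$.

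For those two statements your outline is essentially correct, but two repairs are needed. First, $df=0$ in $\Omega(\Dp)$ does not force the three partials to vanish individually; it only says that $(f_x,f_y,f_z)$ is an $\mathcal{O}(\Dp)$-multiple of the syzygy $(y,x,-p'(z))$. What you actually use is that $df=0$ makes $f$ locally constant on the smooth connected variety $\Dp$ in characteristic zero, which is fine. Second, $D_x\cup D_y$ is \emph{not} all of $\Dp$: it misses the $\deg p$ points $(0,0,a_i)$ with $p(a_i)=0$, which is precisely why the paper's cover also includes $D_{p'(z)}$. You can still run the argument on $D_x\cup D_y$, because the complement has codimension $2$: a regular primitive found on $D_x\cup D_y$ extends to $\Dp$ by normality, and the identity $\omega=dg$ then holds on all of $\Dp$ by density. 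With these fixes, your key computation --- that $[dx/x]$ and $[dy/y]=[p'(z)dz/p(z)]-[dx/x]$ are linearly independent in $H^1_{dR}(D_x\cap D_y)$ --- does yield $H^1_{dR}(\Dp)=0$, which is the nontrivial part of the proposition.
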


\section{Volume form and divergence.}\label{divergence0}

 For any $\theta \in \Ve(\Dp)$ we have the contraction 
$$
i_{\theta} : \Omega^{k+1} \to \Omega^k, \; \; i_{\theta}(\eta)(\theta_1,...,\theta_k) := \eta(\theta,\theta_1,...,\theta_k).
$$
In particular, for $\eta \in \Omega(\Dp)$, we have $i_{\theta}(\eta) = \eta(\theta) \in \mathcal{O}(\Dp)$, and so $i_{\theta}(df) = \theta_f$. 

The vector field $\theta \in \Ve(\Dp)$ acts on the differential forms $\Omega(\Dp)$ by the Lie
derivative $L_{\theta} := d \circ i_{\theta} + i_{\theta} \circ d$, extending the action on $\mathcal{O}(\Dp)$. One finds (see for details \cite[Section 3]{KK10})
$$
L_{\theta}(f) = \theta f, \;  L_{\theta}(df) = d(\theta f) \text{ and } L_{\theta}(h \cdot \mu) = \theta h \cdot \mu+h\cdot L_{\theta} \mu \text{ for } f,h \in \mathcal{O}(\Dp), \mu \in \Omega(\Dp).
$$
Using the volume form $\omega_p$ (see Proposition \ref{proposition2}),  this allows to define the divergence $\di(\theta)$ of a vector field $\theta$:
$$
L_{\theta}\omega_p = d(i_{\theta} \omega_p) = \di(\theta) \cdot \omega_p.
$$ 

The following Lemma is well-known:
\begin{lemma}
Let $\theta = a\dx +b\dy +c\dz \in \Ve(\Dp)$. Then $\di(\delta)=a_x+b_y+c_z$.
\end{lemma}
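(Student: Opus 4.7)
The plan is to verify the formula by a direct local computation on the distinguished open chart $D_x \subset \Dp$, where Proposition~\ref{proposition2}(b) gives $\omega_p = x^{-1}\,dx\wedge dz$. First I would apply $L_\theta$ to this expression via the Leibniz rule together with the standard identities $L_\theta f = \theta(f)$ and $L_\theta(df) = d(\theta f)$ recalled above. Since $\theta(x) = a$ and $\theta(z) = c$, this produces
\[
L_\theta \omega_p \;=\; -\,\frac{a}{x^2}\,dx\wedge dz \;+\; \frac{1}{x}\bigl(da\wedge dz + dx\wedge dc\bigr).
\]

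Next I would expand $da$ and $dc$ in the basis $dx, dy, dz$ and convert each resulting two-form back into a multiple of $\omega_p$ using the identities $dx\wedge dz = x\,\omega_p$, $dy\wedge dz = -y\,\omega_p$, and $dx\wedge dy = p'(z)\,\omega_p$ from Proposition~\ref{proposition2}(b). After collecting terms this yields
\[
L_\theta \omega_p \;=\; \Bigl( a_x + c_z + \tfrac{1}{x}\bigl(-a - y\,a_y + p'(z)\,c_y\bigr) \Bigr)\,\omega_p.
\]

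The decisive step is to identify the fractional term with $b_y$. For this I would differentiate the syzygy $a y + b x - c\,p'(z) = 0$ recorded in Remark~\ref{notation-vector-fields} with respect to $y$, obtaining $a + y\,a_y + x\,b_y - p'(z)\,c_y = 0$, equivalently $-a - y\,a_y + p'(z)\,c_y = x\,b_y$. Substituting back gives $L_\theta \omega_p = (a_x + b_y + c_z)\,\omega_p$ on $D_x$.

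Finally, since $D_x \subset \Dp$ is a dense open subset and both $L_\theta \omega_p$ and $(a_x + b_y + c_z)\,\omega_p$ are globally regular sections of $\Omega^2(\Dp)$, the identity on $D_x$ propagates automatically to all of $\Dp$; alternatively one may run the symmetric computation on $D_y$ or $D_{p'(z)}$ using the other representations of $\omega_p$. I do not foresee a serious obstacle here: the lemma simply says that the Euclidean divergence formula on $\mathbb{A}^3$ descends correctly to the smooth hypersurface $\Dp$, and the descent is mediated precisely by the defining equation $xy = p(z)$, entering the calculation through the differentiated syzygy.
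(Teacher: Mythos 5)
Your computation is correct and follows essentially the same route as the paper: expand $L_\theta\omega_p$ starting from $\omega_p = x^{-1}\,dx\wedge dz$, convert the resulting two-forms back to multiples of $\omega_p$ via $dx\wedge dz = x\,\omega_p$, $dy\wedge dz=-y\,\omega_p$, $dx\wedge dy=p'(z)\,\omega_p$, and absorb the fractional term into $b_y$ by differentiating the relation $ay+bx-cp'(z)=0$ with respect to $y$. The only cosmetic difference is that the paper computes $d(i_\theta\omega_p)$ directly rather than applying the Leibniz rule to $L_\theta$, which amounts to the same thing since $\omega_p$ is a top form.
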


\begin{proof}
 We have $i_{\theta} \omega_p = \frac{1}{x}(\theta(x)dz-\theta(z))dx= \frac{1}{x}(adz-cdx)$, hence

\begin{align*} 
\di(\theta) \cdot \omega_p = d(i_{\theta} \omega_p) =& d(\frac{1}{x}(adz-cdx)) \\
=&	\frac{1}{x^2} ((xda-adx) \wedge dz-(xdc \wedge dx).  
\end{align*}
Now we use the following equalities: $da \wedge dz = a_x \cdot dx \wedge dz + a_y \cdot dy \wedge dz$, $dx \wedge dc = c_y \cdot dx\wedge dy + c_x  \cdot dx\wedge dz$, $dy \wedge dz = -y \cdot \omega_p$, and $dx \wedge dy = p'(z) \cdot \omega_p$ (see above) to get
$$
\di(\theta)=-\frac{a}{x} + a_x - \frac{y}{x} a_y + \frac{p'(z)}{x}c_y + c_z.
$$ 
Since $ya+xb-p'(z)c=0$ we have $a+ya_y +xb_y - p'(z)c_y  = 0$, hence 
$$
-\frac{a}{x} - \frac{y}{x} a_y + \frac{p'(z)}{x} c_y = b_y,
$$
and the claim follows.
\end{proof}

There is another important formula which relates the Lie structure of $\Ve(\Dp)$ with the Lie derivative (see also \cite[Lemma 3.2]{KL13}).

\begin{lemma} For $\theta_1,\theta_2 \in \Ve(\Dp)$ and $\mu \in \Omega(\Dp)$ we have
$$
i_{[\theta_1,\theta_2]} \mu = L_{\theta_1} (i_{\theta_2} \mu) - i_{\theta_2} (L_{\theta_1} \mu).
$$
\end{lemma}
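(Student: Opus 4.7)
The plan is to reduce the identity to a pointwise statement about functions and then verify it on the module generators of $\Omega(\Dp)$. Since $\mu$ is a $1$-form, the contraction $i_{\theta_2}\mu = \mu(\theta_2)$ is an element of $\mathcal{O}(\Dp)$, so $L_{\theta_1}(i_{\theta_2}\mu) = \theta_1(\mu(\theta_2))$. Similarly $L_{\theta_1}\mu$ is a $1$-form and the identity to prove becomes
$$
\mu([\theta_1,\theta_2]) \;=\; \theta_1(\mu(\theta_2)) - (L_{\theta_1}\mu)(\theta_2).
$$

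First I would verify this for exact $1$-forms $\mu = df$ with $f \in \mathcal{O}(\Dp)$. Using the formulas $L_{\theta_1}(df) = d(\theta_1 f)$ and $i_\theta(dg) = \theta g$ recalled just above the statement, the right-hand side equals $\theta_1(\theta_2 f) - \theta_2(\theta_1 f) = [\theta_1,\theta_2] f$, which is precisely $df([\theta_1,\theta_2]) = i_{[\theta_1,\theta_2]} df$.

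Next I would extend to arbitrary $1$-forms by $\mathcal{O}(\Dp)$-linearity in $\mu$. Write $\mu = h\eta$ with $h \in \mathcal{O}(\Dp)$ and $\eta$ a $1$-form for which the identity is already known. Using the Leibniz rule $L_{\theta_1}(h\eta) = (\theta_1 h)\eta + h\cdot L_{\theta_1}\eta$ recalled above and the trivial identity $i_{\theta}(h\eta) = h\cdot i_{\theta}\eta$, both sides of the identity expand and the cross terms $(\theta_1 h)\eta(\theta_2)$ cancel, leaving $h\cdot\eta([\theta_1,\theta_2]) = (h\eta)([\theta_1,\theta_2])$. Since by Proposition \ref{proposition2} the module $\Omega(\Dp)$ is generated by $dx, dy, dz$, this two-step procedure establishes the identity for every $\mu \in \Omega(\Dp)$.

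The argument is essentially a straightforward Cartan-calculus manipulation, and the only mild obstacle is the bookkeeping needed to ensure the Leibniz expansions cancel correctly. No new input beyond the formulas for $L_\theta(f)$, $L_\theta(df)$ and $L_\theta(h\mu)$ already displayed in the text is required.
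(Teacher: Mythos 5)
Your argument is correct. The paper itself offers no proof of this lemma---it is stated as a known formula with a pointer to \cite[Lemma 3.2]{KL13}---so there is no in-text argument to compare against; your write-up is a complete, self-contained verification. The two reduction steps are both sound: for $\mu = df$ the right-hand side becomes $\theta_1(\theta_2 f) - \theta_2(\theta_1 f) = [\theta_1,\theta_2]f = i_{[\theta_1,\theta_2]}df$ exactly as you say, and in the Leibniz step the cross term $(\theta_1 h)\,\eta(\theta_2)$ produced by $L_{\theta_1}(h\cdot\eta(\theta_2))$ cancels against the one coming from $i_{\theta_2}((\theta_1 h)\eta)$, leaving $h$ times the identity for $\eta$. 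The only point worth making explicit is that you also use additivity of all three operations in $\mu$ to pass from the generators $dx, dy, dz$ (with coefficients) to a general element of $\Omega(\Dp)$; this is immediate, and since both sides of the identity are well-defined operators on the module $\Omega(\Dp)$, the relation $y\,dx + x\,dy - p'(z)\,dz = 0$ causes no trouble. No gap.
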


\section{Duality.}\label{Duality.}

The volume form $\omega_p \in \Omega^2(\Dp)$ induces the usual duality between vector 
fields and differential forms: the $\mathcal{O}(\Dp)$-isomorphism $\Ve(\Dp) \isorightarrow \Omega(\Dp)$ is given by 
$\theta \mapsto i_{\theta} \omega_p$. In particular, for every $f \in \mathcal{O}(\Dp)$ we get a vector field 
$\nu_f \in \Ve(\Dp)$ defined by $i_{\nu_f} \omega_p = df$.

Denote by $\Ve^0(\Dp) \subset \Ve(\Dp)$ the subspace of volume preserving vector fields, i.e. $\Ve^0(\Dp) := \{ \theta \in \Ve(\Dp) | \di \theta = 0 \}$.


\begin{proposition}\label{prop8}
 The map $f \mapsto \nu_f$ induces a $\mathbb{C}$-linear isomorphism 
$$
\mathcal{O}(\Dp)/\mathbb{C} \isorightarrow  \Ve^0(\Dp).
$$
\end{proposition}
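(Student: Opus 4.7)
The plan is to establish the isomorphism by checking the four standard points: well-definedness, that the image lies in $\Ve^0(\Dp)$, injectivity modulo constants, and surjectivity. The first point is immediate from the $\mathcal{O}(\Dp)$-linear duality isomorphism $\Ve(\Dp) \isorightarrow \Omega(\Dp)$, $\theta \mapsto i_\theta\omega_p$, recorded at the start of Section \ref{Duality.}: given $f \in \mathcal{O}(\Dp)$, the vector field $\nu_f$ is the unique preimage of $df$ under this isomorphism, and clearly the assignment $f \mapsto \nu_f$ is $\mathbb{C}$-linear.

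For the second point, I would apply Cartan's formula together with $d\omega_p = 0$ (since $\omega_p \in \Omega^2(\Dp)$ is a top form) to compute
$$
\di(\nu_f)\cdot \omega_p \;=\; L_{\nu_f}\omega_p \;=\; d(i_{\nu_f}\omega_p) + i_{\nu_f}(d\omega_p) \;=\; d(df) \;=\; 0,
$$
so $\nu_f \in \Ve^0(\Dp)$. For injectivity modulo $\mathbb{C}$, note that $\nu_f = 0$ in $\Ve(\Dp)$ is equivalent via the duality to $df = 0$, which by the exactness $0 \to \mathbb{C} \to \mathcal{O}(\Dp) \xrightarrow{d} \Omega(\Dp)$ at the left of Proposition \ref{proposition4} forces $f \in \mathbb{C}$.

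The main (and really only substantive) step is surjectivity. Given $\theta \in \Ve^0(\Dp)$, consider the $1$-form $\mu := i_\theta\omega_p \in \Omega(\Dp)$. Again by Cartan's formula,
$$
d\mu \;=\; d(i_\theta\omega_p) \;=\; L_\theta\omega_p - i_\theta(d\omega_p) \;=\; \di(\theta)\cdot\omega_p \;=\; 0,
$$
so $\mu$ is closed. Invoking the exactness $\mathcal{O}(\Dp) \xrightarrow{d} \Omega(\Dp) \xrightarrow{d} \Omega^2(\Dp)$ part of Proposition \ref{proposition4} yields some $f \in \mathcal{O}(\Dp)$ with $df = \mu$; then $i_{\nu_f}\omega_p = df = i_\theta\omega_p$, and uniqueness in the duality gives $\theta = \nu_f$. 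Thus the induced map $\mathcal{O}(\Dp)/\mathbb{C} \to \Ve^0(\Dp)$ is an isomorphism.

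The only real obstacle is the surjectivity step, and it is handled entirely by the vanishing of the first de Rham cohomology of $\Dp$ provided by Proposition \ref{proposition4}; the rest is formal manipulation of Lie derivatives, contractions, and the volume form.
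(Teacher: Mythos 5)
Your proof is correct and is essentially the paper's argument: the paper packages the same reasoning into a single commutative diagram comparing the de Rham complex of Proposition \ref{proposition4} with the sequence $\mathcal{O}(\Dp) \xrightarrow{\nu} \Ve(\Dp) \xrightarrow{\di} \mathcal{O}(\Dp)$ via the duality $\theta \mapsto i_\theta\omega_p$ and the identity $d(i_\theta\omega_p) = \di(\theta)\cdot\omega_p$, whereas you unpack that diagram into the four elementwise checks. No substantive difference.
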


 \begin{proof} Since $d(i_{\theta}\omega_p‰) = \di(\theta) \cdot \omega_p$, we have the following commutative diagram:

$$\begin{CD}
0 @>{}>>  \mathbb{C} @>{d}>> \mathcal{O}(\Dp) @>{d}>> \Omega(\Dp) @>{d}>> \Omega^2(\Dp)  @>{}>> 0 \\
 @.   @.   @AA{=} A    @A{\simeq}A{\theta \mapsto i_{\theta} \omega_p}A@A{\simeq}A{h \mapsto h \cdot \omega_p}A \\
@ .  @.   \mathcal{O}(\Dp) @>{\nu}>> \Ve(\Dp)  @>{\di}>>  \mathcal{O}(\Dp)
\end{CD}
$$
Now the claim follows   because the first row is exact (see Proposition \ref{proposition4}).
\end{proof}

The following result can be found in \cite[Theorem 3.26]{KL13}.

\begin{proposition}\label{correspondents}
Any vector field $\nu \in \Ve^0(\Dp)$ on the Danielewski surface $\Dp$ is a Lie combination of locally nilpotent vector fields if and only if its corresponding function with $i_{\nu} \omega_p = df$ is of the form (modulo constant)
\begin{equation}\label{functions}    f(x,y,z)= \sum_{i=1, j=0}^{k} a_{ij}x^iz^j + \sum_{i=1, j=0}^{l} b_{ij}y^iz^j +(pr)'(z)  \end{equation}
for a polynomial $r \in \mathbb{C}[z]$. If $\deg p \ge 3$, then  any $f \in \mathcal{O}(\Dp)/\mathbb{C}$ bijectively corresponds to some $\nu_f \in  \Ve^{0}(\Dp) := \lielnd{\Dp} \oplus \bigoplus_{i=0}^{\deg p -3} \mathbb{C} z^i(x\dx - y\dy)$.
\end{proposition}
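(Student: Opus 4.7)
The plan is to use Proposition \ref{prop8} to pass from the Lie algebra $\lielnd{\Dp} \subseteq \Ve^0(\Dp)$ to the corresponding Poisson-Lie subalgebra of $\mathcal{O}(\Dp)/\mathbb{C}$. Under $f \leftrightarrow \nu_f$ the Lie bracket on $\Ve^0(\Dp)$ transports to the Poisson bracket defined by $\{f,g\}\omega_p = df \wedge dg$; using the three presentations of $\omega_p$ from Proposition \ref{proposition2} one reads off
$$\{x,y\} = p'(z), \qquad \{x,z\} = x, \qquad \{y,z\} = -y.$$
A direct contraction of $\nu_{x,x^k} = p'(z)x^{k-1}\dy + x^k\dz$ with $\omega_p = \frac{1}{p'(z)} dx \wedge dy$ shows that $\nu_{x,x^k}$ corresponds (modulo constants) to the function $-x^k/k$, and symmetrically $\nu_{y,y^k}$ to $y^k/k$. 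By Proposition \ref{groupstructure}(c) every one-parameter unipotent subgroup of $\Aut(\Dp)$ is conjugate to one in $\U_x$ or $\U_y$, so $\lielnd{\Dp}$ corresponds to the Lie subalgebra $L \subseteq \mathcal{O}(\Dp)/\mathbb{C}$ generated under $\{\cdot,\cdot\}$ by $\{x^k, y^k : k \ge 1\}$.

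Writing $F$ for the subspace of functions of the form (\ref{functions}), I would prove $L = F$ in two steps. For $L \subseteq F$, one checks that $F$ is Poisson-closed: brackets among $x^{*}z^{*}$-monomials or $y^{*}z^{*}$-monomials stay in $F$ by the rules $\{x,z\} = x$ and $\{y,z\} = -y$; brackets with pure $z$-functions reduce via $\{h(z),g\} = h'(z)\{z,g\}$; and the only delicate case is the mixed bracket $\{x^iz^j, y^kz^l\}$, where Leibniz together with $xy = p(z)$ collapses the factor $x^iy^k$ into $x^{i-k}p^k$ or $y^{k-i}p^i$, and the identity $p^{m-1}p' = \frac{1}{m}(p^m)'$ converts the $p'(z)$-contribution into an element of $F$ whose pure-$z$ residue is of the form $(pr)'$.

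For the reverse inclusion $F \subseteq L$, the decisive computation is
$$\{x^k z^j, y^k\} \;=\; k(p^k z^j)'(z), \qquad k \ge 1,\; j \ge 0,$$
obtained from Leibniz together with $\{x^k, y^k\} = k(p^k)'(z)$ and $\{z^j, y^k\} = jk\, y^k z^{j-1}$. As $k,j$ vary, the right-hand side spans all $(pr)'$ with $r \in \mathbb{C}[z]$, since the polynomials $p^{k-1}z^j$ span $\mathbb{C}[z]$. To produce $x^k z^j$ in $L$ in the first place, I would start from the brackets $\{x^a, y^b\} = a x^{a-b}(p^b)'(z)$ (for $a \ge b$), which directly populate $L$ with $x^{a-b}(p^b)'$ and its $y$-symmetric, and then iterate brackets with the generators $x^m$ and $y^m$. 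A degree induction, triangular in the $z$-degree, then separates monomials and builds every $x^iz^j, y^iz^j$ with $i \ge 1$.

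For the second assertion, every $f \in \mathcal{O}(\Dp)$ can be uniquely written (using $xy = p(z)$) as
$$f = f_0(z) + \sum_{i\ge 1} a_i(z)x^i + \sum_{i\ge 1} b_i(z)y^i,$$
so $f \in F$ iff $f_0 \in \{(pr)' : r \in \mathbb{C}[z]\}$. The complement of this subspace inside $\mathbb{C}[z]/\mathbb{C}$ has basis $z, z^2, \ldots, z^{\deg p - 2}$, whose images under $f \mapsto \nu_f$ are nonzero scalar multiples of $z^i \nu_z$ for $i = 0, \ldots, \deg p - 3$, giving the claimed direct-sum decomposition. The main technical obstacle is the bookkeeping: showing closure of $F$ under the Poisson bracket in all cross-cases, and the inductive reconstruction of every monomial $x^iz^j, y^iz^j$ and every $(pr)'$ from $\{x^k, y^k : k \ge 1\}$. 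The identity $\{x^kz^j, y^k\} = k(p^kz^j)'$ is the key observation that makes the otherwise unexpected third summand $(pr)'(z)$ appear naturally.
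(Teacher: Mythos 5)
First, a point of reference: the paper does not prove this proposition at all --- it imports it verbatim as \cite[Theorem 3.26]{KL13}, together with the surrounding machinery (the correspondence $f\leftrightarrow\nu_f$, the formulas (\ref{corresp})--(\ref{bracketformula})). Your proposal is therefore being compared against that cited source rather than an in-text argument. Your framework is the correct one and is exactly the one the paper borrows: pass to functions via $i_{\nu_f}\omega_p=df$, transport the Lie bracket to the Poisson bracket, identify the generators of $\lielnd{\Dp}$ on the function side as $x^k,y^k$ (up to the sign conventions of (\ref{corresp}), your brackets differ from (\ref{bracketformula}) by a global sign, which is harmless). The inclusion $L\subseteq F$ does check out: in the mixed case $\{x^iz^j,y^kz^l\}$ with $i=k$ the two terms combine to $\pm i\,(p^iz^{j+l})'=\pm i\,(p\cdot p^{i-1}z^{j+l})'$, so the pure-$z$ residue really lands in $\{(pr)'\}$; your key identity $\{x^kz^j,y^k\}=\mp k(p^kz^j)'$ is correct and does explain the third summand; and the dimension count for the complement $\bigoplus_{i=0}^{\deg p-3}\C z^i(x\frac{\d}{\d x}-y\frac{\d}{\d y})$ is right, since $(pr)'$ has degree at least $\deg p-1$.

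The genuine gap is the inclusion $F\subseteq L$, specifically the production of \emph{every} monomial $x^iz^j$ (and $y^iz^j$) from the generators $\{x^k,y^k\}$. This is not bookkeeping; it is the substantive content of the cited theorem, and the two mechanisms you name do not obviously suffice. The brackets $\{x^a,y^b\}$ with $a\ge b$ only yield $x^m\cdot(p^b)'$, i.e.\ $x^m$ times elements of $p'\,\C[p]$, which is a proper subspace of $\C[z]$; and brackets purely within the $x$-side, $\{x^iz^j,x^kz^l\}=(jk-il)\,x^{i+k}z^{j+l-1}$, strictly \emph{increase} the $x$-degree, so they can never reach $xz^j$ for $j$ large. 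Reaching low $x$-degree with high $z$-degree forces you back through the $y$-side and the relation $xy=p(z)$, e.g.\ $\{x^iz^j,y\}=\pm x^{i-1}(ip'z^j+jpz^{j-1})$, and one must then show that the resulting triangular system of $z$-polynomials (whose degrees are shifted by $\deg p-1$) together with the vanishing coefficients $jk-il=0$ on certain diagonals still spans everything. Your phrase ``a degree induction, triangular in the $z$-degree, then separates monomials'' asserts precisely the conclusion that needs proving; without exhibiting the induction (as \cite{KL13} does), the proof of the forward inclusion --- and hence of the proposition --- is incomplete. A secondary, smaller gap: identifying $L$ with the Poisson algebra generated by $\{x^k,y^k\}$ uses that $\lielnd{\Dp}$ is generated by the standard fields $x^i\nu_x, y^i\nu_y$ alone; Proposition \ref{groupstructure}(c) gives this only after an $\Ad$-invariance argument (or another appeal to \cite{KL13}), which you should at least record.
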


Some corresponding functions are given as follows
(see \cite[Lemma 3.1]{KL13}): Let $h$ be a polynomial in one variable and let $\nu_x,\nu_y, \nu_z$ be the vector fields from Remark \ref{notation-vector-fields}. Then
\begin{equation}\label{corresp} f_{h'(x)\nu_x} = -h(x), \quad f_{h'(y)\nu_y}= h(y), \quad f_{h'(z)\nu_z}= h(z). \end{equation}

We also recall the useful relation that describes the corresponding function of a Lie bracket of two vector fields $\nu,\mu\in \Ve^\circ \Dp$ (see  \cite[Lemma 3.2]{KL13}):
\begin{equation}\label{bracket}
 f_{[\nu,\mu]} = \nu(f_\mu),
\end{equation}
where $\nu(f_\mu)$ is $\nu$ applied as a derivation to the function $f_\mu$. The function $f_{[\nu,\mu]}$ may also be calculated by the following formula (see 
\cite[formula after Lemma 3.2]{KL13}):
\begin{flalign}
\label{bracketformula}f_{[\nu,\mu]} = \lbrace f_\nu,f_\mu \rbrace := \ 
&p'(z)\big((f_\nu)_y(f_\mu)_x-(f_\nu)_x(f_\mu)_y\big) + \\& 
\nonumber x\big((f_\nu)_z(f_\mu)_x - (f_\nu)_x(f_\mu)_z\big) -y\big((f_\nu)_z(f_\mu)_y-(f_\nu)_y(f_\mu)_z\big),
\end{flalign}
where the subindex denotes the partial derivative to the respective variable.

Let $I\subset \lielnd{\Dp}$ be a non-trivial ideal and let $\tilde I$ be the set of functions corresponding to this ideal by the correspondence in (\ref{functions}).
Since $I$ is an
ideal,  we have, using (\ref{bracket}), that
\begin{equation}\label{ideal}
 \left\lbrace\nu \in I \left(\Leftrightarrow f_\nu\in\tilde I\right) \text{ and } \ \mu \in\lielnd{\Dp} \right\rbrace\implies \nu(f_\mu), \ \mu(f_\nu) \in \tilde I.
\end{equation}

Our next goal is to prove the following result.

\begin{proposition}\label{simple}
The Lie algebra $ \lielnd{\Dp}$ is  simple.
\end{proposition}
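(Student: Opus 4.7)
Plan. Via the bijection of Proposition~\ref{prop8} and the description of $\lielnd{\Dp}$ in Proposition~\ref{correspondents}, simplicity is equivalent to showing that the Poisson subalgebra $\tilde L \subset \mathcal O(\Dp)/\mathbb C$ of functions of the form (\ref{functions}) has no nonzero proper ideal under the bracket (\ref{bracketformula}). I would fix such a putative ideal $\tilde I$ and some $0 \neq f \in \tilde I$, and aim to produce the element $y \in \tilde I$ (equivalently, $\nu_y \in I$), from which generating everything is easy. The three fundamental bracket operations are $\{x,\cdot\} = -\nu_x$, $\{y,\cdot\} = \nu_y$, and $\{p'(z),\cdot\} = p''(z)\nu_z$; all preserve $\tilde I$, because $x, y, p'(z)=(p\cdot 1)'$ all lie in $\tilde L$. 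Under the $T$-grading (weights $-1, +1, 0$ on $x, y, z$) the first two shift weight by $\mp 1$, while the third multiplies the weight-$w$ component by $w\,p''(z)$ and hence annihilates the weight-zero $(pr)'(z)$-part.

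The key steps would be: (i) apply $\{p'(z),\cdot\}$ to $f$ to kill its $(pr)'(z)$-summand; (ii) from the resulting element, use combined iterations of $\nu_x, \nu_y$ together with $\mathbb C$-linear combinations to isolate a single-weight element $y^n g(z) \in \tilde I$ (or $x^n g(z)$), exploiting that each nonzero weight slot of $\tilde L$ is the one-dimensional $\mathbb C[z]$-module $y^n \mathbb C[z]$ or $x^n \mathbb C[z]$; (iii) clear the $z$-dependence via the clean identity $\nu_y(y^n g(z)) = y^{n+1} g'(z)$ iterated $\deg g$ times, producing a pure monomial $y^M \in \tilde I$; (iv) descend $M$ to $1$ using the identities $\nu_x(y^M) = M p'(z) y^{M-1}$ and $\nu_x^2(y^M) = M\bigl(p(z)p''(z) + (M-1)p'(z)^2\bigr) y^{M-2}$ (both from $xy=p(z)$); and (v) from $y \in \tilde I$, use $\{x^k, y\} = -k p'(z) x^{k-1}$ together with further iterations of $\nu_x,\nu_y$ to recover every basis element of $\tilde L$.

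The main obstacle lies in steps (ii) and (iv). In (ii), the operation $\{p'(z), \cdot\}$ does kill the $(pr)'$-part but only multiplies the remaining weight components by $w\,p''(z)$; so isolating a single weight via $\mathbb C$-linear combinations of iterated brackets is obstructed by $p''(z)$ being non-constant, and a more delicate induction interleaving $\nu_x$ and $\nu_y$ operations is required. In (iv), each $\nu_x^2$-application cuts $y$-degree by $2$ at the price of inserting a $z$-polynomial $Q_M(z) = p(z)p''(z) + (M-1)p'(z)^2$ of degree $2\deg p - 2$; clearing it by $\nu_y$-iterations \emph{raises} the $y$-degree again, so a naive descent spirals upwards rather than terminating. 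Termination should be forced by a carefully chosen joint ``size'' function (combining $y$-degree and $z$-degree) together with the observation that $Q_M$ depends nontrivially on $M$, so that $\mathbb C$-linear combinations of reductions applied to $y^M$ for several values of $M$ can isolate cleaner residues. The assumption $\deg p \geq 2$ enters essentially throughout, since $p''(z) \not\equiv 0$ is what makes step (i), and hence the entire weight-extraction scheme, effective.
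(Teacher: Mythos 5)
Your framework---passing to the ideal $\tilde I$ of corresponding functions, acting by the three operators $\nu_x$, $\nu_y$, $p''(z)\nu_z$ obtained from bracketing with $x$, $y$, $p'(z)$, and bookkeeping with degrees in $x$---is exactly the paper's. But the two places you flag as ``main obstacles'' are precisely where the proof has to happen, and your proposed workarounds do not close them. For your step (ii): since $\lbrace p'(z),\cdot\rbrace$ multiplies the weight-$w$ slot by $w\,p''(z)$, no $\mathbb{C}$-linear (Vandermonde-type) combination of its iterates can isolate a single slot, because the extraneous non-constant factor $(p'')^k$ cannot be divided out inside the Lie algebra; you correctly identify this obstruction but leave it unresolved. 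The paper sidesteps it entirely: iterate $\nu_x$ until you land in $\ker\nu_x=\mathbb{C}[x]$, obtaining one clean element of bidegree $(l,k)$ with $l,k\ge 1$, and then run a \emph{terminating} descent on the bidegree, alternating $\nu_y$ (which lowers both entries by one, Lemma~\ref{Step4}) with $p''(z)\nu_z$ (which pushes the min-degree off $0$ while preserving the max-degree, Lemma~\ref{Step3}), until a non-constant $h(z)\in\tilde I$ appears (Lemma~\ref{ExistsH}).

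The second gap, your step (iv), is fatal as stated: by your own computation the reduction $y^M\rightsquigarrow Q_M(z)\,y^{M-2}$ inserts a factor of degree $2\deg p-2$, and clearing it with $\nu_y$ raises the $y$-degree by $2\deg p-2\ge 2$, so the procedure spirals for $\deg p\ge 3$ and merely stalls for $\deg p=2$; the suggested ``joint size function'' and ``combinations over several $M$'' are not substantiated, and indeed $Q_{M_1}y^{M_1-2}$ and $Q_{M_2}y^{M_2-2}$ lie in different weight slots, so they cannot be combined. The missing idea is the one in Lemmas~\ref{AllStartingN} and~\ref{AllStartingOne}: first go \emph{up}, establishing $r(z)x^{n}\in\tilde I$ for \emph{all} $r\in\mathbb{C}[z]$ and all large $n$ (by applying $(p(z)s(z))''\nu_z$ for varying $s$ and then powers of $\nu_x$, so that the whole coefficient ring $\mathbb{C}[z]$ is swept out), and only then descend one degree at a time by combining $\nu_y(z^jx^{m})=mp'(z)z^jx^{m-1}+jp(z)z^{j-1}x^{m-1}$ with $x^{m-1}\nu_x(yz^j)=p'(z)z^jx^{m-1}+jp(z)z^{j-1}x^{m-1}$ to place $x^{m-1}\cdot(p')$ and $x^{m-1}\cdot(p)$ in $\tilde I$, and invoking $(p(z),p'(z))=\mathbb{C}[z]$. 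This is where the hypothesis that $p$ has simple roots enters decisively; your proposal credits the hypotheses only with $p''\not\equiv 0$ and never uses the coprimality of $p$ and $p'$, without which no descent of this kind can break the degree barrier.
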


We prove Proposition \ref{simple} in  several steps and start with the following Lemma.

\begin{lemma}\label{Step1}
Let $f$ be a regular function on $\Dp$.  Then $f$ can be written uniquely as $f(x,y,z) = \sum_{i=l}^{k} a_i(z)x^i$ for some $k,l \in \mathbb{Z}$. 
\end{lemma}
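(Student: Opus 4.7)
The plan is to pull back the structure of $\mathcal{O}(\Dp)$ through the open immersion $(\Dp)_x \hookrightarrow \Dp$, exploiting fact $(c)$ stated at the beginning of Section \ref{Section3}: the projection $\pi_x$ restricts to an isomorphism $(\Dp)_x \isorightarrow \dot{\mathbb{C}} \times \mathbb{C}$, $(x,y,z) \mapsto (x,z)$, which identifies $\mathcal{O}((\Dp)_x)$ with the Laurent polynomial ring $\mathbb{C}[x, x^{-1}, z]$. Since the polynomial $xy - p(z)$ is irreducible (being monic of degree one in $y$ over the domain $\mathbb{C}[x,z]_x$), the coordinate ring $\mathcal{O}(\Dp)$ is a domain and $x$ is a non-zero-divisor. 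Consequently the localization map $\mathcal{O}(\Dp) \hookrightarrow \mathcal{O}(\Dp)_x \cong \mathbb{C}[x,x^{-1},z]$ is injective.

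Now $\{x^i\}_{i \in \mathbb{Z}}$ is a free $\mathbb{C}[z]$-basis of $\mathbb{C}[x,x^{-1},z]$, so any element of the Laurent polynomial ring has a unique expansion $\sum_{i=l}^{k} a_i(z)\, x^i$ with $a_i \in \mathbb{C}[z]$ and $k, l \in \mathbb{Z}$. Applying this to the image of $f$ under the localization map, and using the injectivity just established, yields both existence and uniqueness of the representation in $\mathcal{O}(\Dp)$.

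For a more hands-on proof of existence I would proceed directly: write $f$ as a polynomial in $x,y,z$, use the relation $xy = p(z)$ repeatedly to eliminate every mixed monomial, reducing $f$ to the canonical normal form
$$
f \;=\; \sum_{i \geq 0} a_i(z)\, x^i \;+\; \sum_{j \geq 1} b_j(z)\, y^j,
$$
and then substitute $y = p(z)/x$ (valid in the localization) to absorb the $y$-terms into negative powers of $x$, setting $a_{-j}(z) := b_j(z)\, p(z)^j$ for $j \geq 1$.

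I do not anticipate any real obstacle here: the lemma is a structural fact about $\mathcal{O}(\Dp)$ that follows from the triviality of the fibration $\pi_x$ over $\dot{\mathbb{C}}$, recorded already in fact $(c)$. The only point requiring any care is the injectivity of the localization, which reduces to the irreducibility of $xy - p(z)$.
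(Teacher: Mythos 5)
Your proof is correct and follows essentially the same route as the paper: the paper's one-line argument is to write $f$ in the normal form $\sum_i a_i(z)x^i + \sum_j b_j(z)y^j$ and substitute $y = p(z)/x$, which is exactly your ``hands-on'' existence step. Your localization argument $\mathcal{O}(\Dp) \hookrightarrow \mathcal{O}(\Dp)_x \cong \mathbb{C}[x,x^{-1},z]$ is just a cleaner packaging of the same idea, with the added merit of making the uniqueness claim explicit, which the paper leaves implicit.
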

\begin{proof}
Let us take the form of $f$ as in (\ref{functions}) and replace $y$ by
$p(z)/x$.  The claim follows.   
\end{proof}

Choose $l,k \in \mathbb{Z}$ such that $a_l,a_k \neq 0$  and  denote by $\deg(f) = (l,k)$ the pair of min- and max-degree in $x$.

\begin{lemma}\label{Step2}
 Let  $f \in \mathcal{O}(\Dp)$.  Then $\nu_x(f)$ and $\nu_y(f)$  are never non-zero constants.
\end{lemma}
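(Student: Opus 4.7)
The plan is to use the unique Laurent-in-$x$ representation provided by Lemma 1 and exploit the divisibility conditions that cut out the regular functions inside $\mathbb{C}[x,x^{-1},z]$.

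First I would write $f = \sum_{i=l}^{k} a_i(z)\, x^i$ with $a_i \in \mathbb{C}[z]$. The next observation to record is that, via the localization $\mathcal{O}(\Dp) \hookrightarrow \mathcal{O}(\Dp)_x = \mathbb{C}[x, x^{-1}, z]$ in which $y$ becomes $p(z)/x$, the image of $y^j$ is $p(z)^j x^{-j}$; hence a Laurent polynomial $\sum a_i(z) x^i$ lies in $\mathcal{O}(\Dp)$ if and only if $p(z)^{-i}$ divides $a_i(z)$ for every $i < 0$. From $\nu_x(x) = 0$, $\nu_x(z) = x$ and the Leibniz rule one immediately gets
$$\nu_x(f) = \sum_{i=l}^{k} a_i'(z)\, x^{i+1}.$$

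Supposing $\nu_x(f) = c$ for a non-zero constant $c$, applying the uniqueness of the Laurent form to the right-hand side $c = c \cdot x^0$ would force $a_i'(z) = 0$ for all $i \neq -1$ and $a_{-1}'(z) = c$, so $a_{-1}(z) = cz + d$ for some $d \in \mathbb{C}$. The finishing punch is that the divisibility condition recorded above requires $p(z) \mid a_{-1}(z)$, and since $\deg p \geq 2 > 1 \geq \deg(cz + d)$ this forces $a_{-1} = 0$, hence $c = 0$, a contradiction. The case of $\nu_y$ would be handled by the symmetric representation $f = \sum \tilde a_i(z)\, y^i$ arising from the localization $\mathcal{O}(\Dp)_y \cong \mathbb{C}[y, y^{-1}, z]$ together with $\nu_y(y) = 0$ and $\nu_y(z) = y$. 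I do not anticipate any real obstacle here; the substance is already contained in Lemma 1, and the rest is divisibility bookkeeping combined with the trivial degree inequality $\deg p \geq 2 > \deg(cz+d)$.
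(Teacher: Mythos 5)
Your argument is correct and is essentially identical to the paper's own proof: both expand $f=\sum_{i}a_i(z)x^i$ via Lemma \ref{Step1}, compute $\nu_x(f)=\sum_i a_i'(z)x^{i+1}$, and rule out a non-zero constant value because the coefficient $a_{-1}$ would have to be linear while being divisible by $p$ with $\deg p\ge 2$. You merely make explicit the divisibility criterion for membership in $\mathcal{O}(\Dp)\subset\mathbb{C}[x,x^{-1},z]$ that the paper uses implicitly; no further comment is needed.
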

\begin{proof}
Any regular function $f$ on $\Dp$ can be written in the form  $\sum_{i=l}^{k} a_i(z)x^i$ by Lemma \ref{Step1}. Then $\nu_x(f) = \sum_{i=l}^{k}  a'_i(z)x^{i+1}$, in particular, $\nu_x(f)$ is constant only if $a_{-1}$ is  linear,  which is not the case since  $a_{-1}$ is divisible by $p$.  The  case of $\nu_y(f)$ is analogous.
\end{proof}

\begin{lemma}\label{Step4}
Let  $\deg f = (l,k)$ and  $l,k \ge 1$. Then  $\deg \nu_y(f)=(l-1,k-1)$.
\end{lemma}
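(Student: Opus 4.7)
The plan is straightforward computation combined with a degree argument. Using the formula $\nu_y = p'(z)\dx + y\dz$ from Remark \ref{notation-vector-fields}, I would start by writing $f = \sum_{i=l}^{k} a_i(z) x^i$ as in Lemma \ref{Step1}. Since $l \geq 1$, the function $f$ involves no negative powers of $x$, and in particular no terms depending only on $y$. This is the key structural point that makes the calculation clean.

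Applying $\nu_y$ gives
\[
\nu_y(f) = p'(z) \sum_{i=l}^{k} i\, a_i(z) x^{i-1} + y \sum_{i=l}^{k} a_i'(z) x^i.
\]
Because $l \geq 1$, every index $i$ in the second sum satisfies $i \geq 1$, so I can use the relation $xy = p(z)$ to rewrite $y x^i = p(z) x^{i-1}$ without introducing any $x^{-1}$ terms. Combining and re-indexing with $j = i-1$ yields
\[
\nu_y(f) = \sum_{j=l-1}^{k-1} \left[(j+1) p'(z) a_{j+1}(z) + p(z) a_{j+1}'(z)\right] x^j.
\]

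The remaining task is to show the extremal coefficients (at $j = k-1$ and $j = l-1$) do not vanish. The useful observation is that
\[
(j+1) p' a_{j+1} + p a_{j+1}' = \frac{1}{p(z)^j} \cdot \frac{d}{dz}\bigl[p(z)^{j+1} a_{j+1}(z)\bigr].
\]
Thus vanishing of this coefficient would force $p(z)^{j+1} a_{j+1}(z)$ to be a constant. For $j+1 = k$ (or $j+1 = l$), the polynomial $p(z)^{j+1} a_{j+1}(z)$ has degree at least $(j+1)\deg p \geq 2$, so it cannot be a nonzero constant, and a vanishing constant would force $a_{j+1} = 0$, contradicting the assumption that $a_l$ and $a_k$ are the extremal nonzero coefficients of $f$.

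The only subtle point is ensuring that the reindexed expression really is the unique Laurent form guaranteed by Lemma \ref{Step1}, i.e.\ that no further collapse happens between the two summands. This is automatic because both contributions land in $\mathbb{C}[z] \cdot x^j$ for the same range of $j$, and the uniqueness of the representation in Lemma \ref{Step1} forbids any hidden cancellations beyond what appears directly in the bracketed coefficient. I expect no real obstacle; the main thing to be careful about is the use of $l \geq 1$ to justify the substitution $y x^i = p(z) x^{i-1}$ without spawning negative powers of $x$, and the degree bound $\deg p \geq 2$ to rule out the extremal coefficients vanishing.
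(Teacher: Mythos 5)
Your proof is correct and follows essentially the same route as the paper: write $f=\sum_{i=l}^k a_i(z)x^i$, compute $\nu_y(f)=\sum_{i=l}^k\bigl(i\,p'a_i+p\,a_i'\bigr)x^{i-1}$ using $yx^i=p(z)x^{i-1}$, and check the extremal coefficients survive. The paper merely asserts that $i\,p'a_i+p\,a_i'\neq 0$ when $a_i\neq 0$; your observation that this expression equals $p^{-(i-1)}\frac{d}{dz}\bigl[p^{i}a_{i}\bigr]$ supplies the justification the paper leaves implicit.
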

\begin{proof}
Let $ f = \sum_{i=l}^{k} a_i(z)x^i$.
Then $\nu_y(f) = \sum_{i=l}^{k} (i p'(z) a_i(z) + p(z) a'_i(z))x^{i-1}$.  
If $a_i(z) \neq 0$, then $i p'(z) a_i(z) + p(z) a'_i(z) \neq 0$ and the claim follows. 
\end{proof}

\begin{lemma}\label{Step3}
Let $\deg f = (l,k)$, where $k>l\ge 0$, then $\deg \left(p''(z)\nu_z(f)\right) = (\tilde{l},k)$, where  $\tilde{l} = l$  if $l \ge 1$ and  $\tilde{l} > l$ if  $l = 0$.  
\end{lemma}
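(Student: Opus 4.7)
The plan is to apply $\nu_z$ directly to the unique Laurent representation of $f$ provided by Lemma \ref{Step1}. Writing $f = \sum_{i=l}^{k} a_i(z)\, x^i$ with $a_l, a_k \neq 0$ and $l \geq 0$, I observe that this is already an honest polynomial in $x$ and $z$, so the vector field $\nu_z = x\dx - y\dy$ can be evaluated using the lift $\tilde f(x,y,z) = \sum_{i=l}^{k} a_i(z)\, x^i$ to $\mathbb{A}^3$, which satisfies $\d_y \tilde f = 0$. This gives
\[
\nu_z(f) \;=\; x\, \d_x \tilde f \;=\; \sum_{i=l}^{k} i\, a_i(z)\, x^i,
\]
and crucially the right-hand side is already in the canonical form of Lemma \ref{Step1}, so its minimum and maximum $x$-degrees can be read off directly.

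Next I would split into cases. If $l \geq 1$, the $x^l$-coefficient is $l\, a_l(z) \neq 0$ and the $x^k$-coefficient is $k\, a_k(z) \neq 0$ (since $k > l \geq 1$), whence $\deg \nu_z(f) = (l,k)$. If $l = 0$, the $x^0$-coefficient is $0 \cdot a_0(z) = 0$, so the minimum $x$-degree of $\nu_z(f)$ is strictly greater than $l = 0$, while the top coefficient $k\, a_k(z)$ is still nonzero; thus $\deg \nu_z(f) = (\tilde l, k)$ with $\tilde l > l$.

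To conclude, multiplication by $p''(z)$ does not disturb the $x$-degree pattern: under the running assumption $\deg p \geq 3$ of this section, $p''(z)$ is a nonzero element of $\mathbb{C}[z]$, so it only rescales each coefficient $a_i(z)$ by a nonzero polynomial in $z$ and neither creates nor kills any nonzero $x$-power. The two degree computations above therefore transfer verbatim to $p''(z)\nu_z(f)$, yielding the claimed equality. I do not foresee a real obstacle here; the only point meriting care is that $\nu_z(f)$ lies in the unique form of Lemma \ref{Step1} without any rewriting modulo $xy = p(z)$, which is clear from $\nu_z(z) = 0$ and $\nu_z(x^i) = i x^i$.
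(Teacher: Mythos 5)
Your proof is correct and follows essentially the same route as the paper: write $f=\sum_{i=l}^k a_i(z)x^i$ via Lemma \ref{Step1}, compute $\nu_z(f)=\sum_i i\,a_i(z)x^i$, and read off the degrees, the $i=0$ term dropping out when $l=0$. The paper's proof is just this one-line computation; your added remarks (that the lift with $\partial_y\tilde f=0$ is legitimate and that multiplying by the nonzero polynomial $p''(z)$ preserves both extreme $x$-degrees) only make explicit what the paper leaves implicit.
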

\begin{proof}
Let $f = \sum_{i=l}^{k} a_i(z)x^i$.  Then the claim follows  from the equality  $\nu_z(f) = \sum_{i=\tilde{l}}^{k} i  a_i(z)x^i$.
\end{proof}

\begin{lemma}\label{ExistsH}
Let $\tilde{I}\neq 0$. Then there exists some $h\in\tilde{I}$ such that $h\in\C[z]\setminus\C$ is a non-constant polynomial in $z$.
\end{lemma}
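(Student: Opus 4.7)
The plan is to produce a non-constant $h\in\mathbb{C}[z]\cap\tilde I$ by successively reducing a representative of $\tilde I$ using derivations that come from $\lielnd{\Dp}$ and hence preserve $\tilde I$ by (\ref{ideal}). The three tools I will use are $\nu_x$, $\nu_y$, and $p''(z)\nu_z=[\nu_x,\nu_y]$, the last of which one verifies from $\{-x,y\}=p'(z)$ via (\ref{bracketformula}) and (\ref{corresp}). The single structural fact I will invoke throughout is that, for any $f\in\tilde I$ written in Lemma~\ref{Step1} form as $\sum_{i=l}^{k}a_i(z)x^i$, the coefficient $a_0(z)$ is necessarily of the form $(pr)'(z)$ for some $r\in\mathbb{C}[z]$; this drops out of converting (\ref{functions}) into Laurent form in $x$ using $y=p(z)/x$. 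Under the standing hypothesis $\deg p\ge 3$, any $(pr)'(z)$ is either identically zero or non-constant of degree at least $\deg p-1$, since otherwise $pr=cz+d$ with $p\mid cz+d$, which is impossible for $r\ne 0$. Consequently, every nonzero pure-in-$z$ element of $\tilde I$ is automatically non-constant, and the lemma reduces to producing \emph{any} nonzero element of $\tilde I\cap\mathbb{C}[z]$.

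Given $0\ne f\in\tilde I$ of bi-degree $(l,k)$, I carry out the reduction in two phases. In the first phase, if $l<0$, I apply $\nu_x$ exactly $|l|$ times: writing $a_l(z)=c_{|l|}(z)p(z)^{|l|}$ with $c_{|l|}\ne 0$, a direct computation yields $\nu_x(c_j(z)y^j)=(jp'c_j+pc_j')(z)y^{j-1}=((p^jc_j)'/p^{j-1})(z)y^{j-1}$, which is nonzero whenever $c_j\ne 0$ because $p^jc_j$ is non-constant. Iterating on the minimum-degree summand $c_{|l|}y^{|l|}$ lands after $|l|$ steps at $y^0$, producing a nonzero polynomial in $z$ at the $x^0$-level, while all other summands of $f$ get pushed to strictly positive $x$-powers. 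The outcome is $f_1\in\tilde I$ of bi-degree $(0,k_1)$ with nonzero $x^0$-coefficient and some $k_1\ge 0$. In the second phase I induct on $k_1$: if $k_1=0$ then $f_1\in\mathbb{C}[z]$ is nonzero, hence non-constant by the structural observation, and I take $h:=f_1$; if $k_1\ge 1$, I apply $p''(z)\nu_z$ first, which multiplies each $a_i$ by $ip''(z)$ and therefore annihilates the $a_0$-term while keeping $a_{k_1}$ nonzero, and then apply $\nu_y$, which by Lemma~\ref{Step4} drops the bi-degree to $(l',k_1-1)$ with $l'\ge 0$ and top coefficient equal to $k_1(p^{k_1}p''a_{k_1})'/p^{k_1-1}$, nonzero since $p^{k_1}p''a_{k_1}$ has degree $\ge 3k_1+1>0$. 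The inductive hypothesis applied to this function of strictly smaller max-degree then produces $h$.

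The main obstacle is the bookkeeping: one must check that each intermediate function remains nonzero (equivalently, not constant as a class in $\mathcal{O}(\Dp)/\mathbb{C}$) at every step of both phases. The entire verification collapses to the two identities $c\mapsto (p^jc)'/p^{j-1}$ (Phase~1) and $a\mapsto (p^kp''a)'/p^{k-1}$ (Phase~2), both of which send nonzero polynomials to nonzero polynomials because $\deg p\ge 3$ forces the polynomial inside the derivative to have strictly positive degree. Once the reduction terminates at a pure-in-$z$ element of $\tilde I$, the structural observation of the first paragraph automatically upgrades \emph{nonzero} to \emph{non-constant}, delivering the desired $h$.
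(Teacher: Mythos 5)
Your proof is correct and follows essentially the same route as the paper: starting from a nonzero element of $\tilde I$, one descends to bidegree $(0,0)$ by repeatedly applying $\nu_x$, $\nu_y$ and $p''(z)\nu_z=[\nu_x,\nu_y]$, tracking the min- and max-degree in $x$ exactly as in Lemmas \ref{Step3} and \ref{Step4}. The only differences are cosmetic: the paper first uses local nilpotence of $\nu_x$ to land in $\mathbb{C}[x]\setminus\mathbb{C}$ (so min- and max-degree are both $\geq 1$) before descending, whereas you clear only the negative $x$-powers and then argue separately (via the observation that the $x^0$-coefficient of an element of $\tilde I$ has the form $(pr)'$, which the paper leaves implicit) that the terminal pure-in-$z$ element is non-constant.
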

\begin{proof}
Take a non-zero $f \in \tilde{I}$. Since $\nu_x$ is locally nilpotent, there is $k \in \mathbb{N}$ such that $\nu_x(\nu_x^{k}(f)) = 0$. Thus we have a function $g = \nu_x^{k}(f) \in \mathbb{C}[x] \setminus \mathbb{C}$, which means $\deg g = (l,k)$ for $k,l \ge 1$. From (\ref{ideal}) 
it follows that $g\in\tilde{I}$. By applying  Lemma \ref{Step4} and Lemma \ref{Step3} step by step,  we will get that here is a non-constant $h \in \tilde{I}$ such that 
$\deg h = (0,0)$, which proves the claim.
\end{proof}

\begin{lemma}\label{AllStartingN}
Let $\tilde{I}\neq 0$. Then there is some $m\in\N$ such that $r(z)x^{n+1} \in \tilde{I}$ for any $r  \in \mathbb{C}[z]$ and $n \ge m$.
\end{lemma}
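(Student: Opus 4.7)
My plan is to begin from the nonzero $h(z)\in\tilde I$ supplied by Lemma~\ref{ExistsH} and produce, by successive applications of the ideal property~(\ref{ideal}), first a pure power $x^d$, then the whole of $x^{d+1}\mathbb{C}[z]$, and finally every $x^{n+1}\mathbb{C}[z]$ for $n\ge d$. For the first step, observe that $\nu_x = p'(z)\dy + x\dz$ is LND, hence lies in $\lielnd{\Dp}$, and that $\nu_x$ kills $x$. A short induction gives $\nu_x^j(x^k g(z)) = x^{k+j}g^{(j)}(z)$ for any $g\in\mathbb{C}[z]$ and $j,k\ge 0$. Applying this with $k=0$ to $h(z)$ and iterating~(\ref{ideal}) yields $x^j h^{(j)}(z) \in \tilde I$ for every $j\ge 0$; with $j = d := \deg h$ we obtain $x^d \in \tilde I$ up to a nonzero scalar.

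Next I would exploit the ``mixed'' admissible functions appearing in~(\ref{functions}): for every $j\ge 1$ the function $xz^j$ has the form~(\ref{functions}), so by Proposition~\ref{correspondents} there is a vector field $\mu_j\in\lielnd{\Dp}$ with $f_{\mu_j}=xz^j$. From the defining relation $i_{\mu_j}\omega_p = d(xz^j)$ combined with $\omega_p = x^{-1}dx\wedge dz$, one reads off $\mu_j(x) = j\,x^2 z^{j-1}$. Now~(\ref{ideal}), applied to $f_\mu = x^d\in \tilde I$ and $\nu=\mu_j$, gives
\[
\mu_j(x^d) = d\,x^{d-1}\mu_j(x) = dj\cdot x^{d+1}z^{j-1} \in \tilde I
\]
for every $j\ge 1$. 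Letting $j$ range over $\mathbb{N}$, we conclude $x^{d+1}z^k\in \tilde I$ for all $k\ge 0$, that is, $x^{d+1}\mathbb{C}[z] \subset \tilde I$. Finally, whenever $x^N\mathbb{C}[z]\subset\tilde I$ holds, applying $\nu_x$ gives $\nu_x(x^N r(z)) = x^{N+1}r'(z) \in \tilde I$ for each $r\in\mathbb{C}[z]$; since $d/dz\colon\mathbb{C}[z]\to\mathbb{C}[z]$ is surjective, this upgrades to $x^{N+1}\mathbb{C}[z]\subset\tilde I$. Iterating from $N=d+1$ yields $x^{n+1}\mathbb{C}[z]\subset\tilde I$ for every $n\ge d$, so the lemma holds with $m:=d$.

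The only conceptual obstacle is the middle step: the naive derivations $\nu_x$ and $\nu_y$ applied to $h(z)$ produce polynomials in $z$ only of the restricted form $(pr)'$ and its iterated derivatives, and such polynomials do not exhaust $\mathbb{C}[z]$. The vector fields $\mu_j$ coming from the mixed admissible functions $xz^j$ bypass this restriction by injecting the missing factors $z^{j-1}$ directly into the resulting expression, at the cost of raising the $x$-degree by one.
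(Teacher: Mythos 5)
Your argument is correct, and every step checks out: $\nu_x^j(h)=x^jh^{(j)}(z)$ does yield $x^d\in\tilde I$ for $d=\deg h\ge 1$; the field $\mu_j=\nu_{xz^j}$ lies in $\lielnd{\Dp}$ by Proposition \ref{correspondents}, satisfies $\mu_j(x)=jx^2z^{j-1}$ (equivalently $\lbrace xz^j,x\rbrace=jx^2z^{j-1}$ by (\ref{bracketformula})), so (\ref{ideal}) gives $x^{d+1}z^{j-1}\in\tilde I$; and the surjectivity of $d/dz$ on $\C[z]$ propagates $x^N\C[z]\subset\tilde I$ upward. The overall bootstrap is the same as the paper's, but your middle step takes a genuinely different route. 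The paper never forms a pure power of $x$: it passes from $h$ to $h'(z)x$, multiplies by $(ps)''$ using the fields $(ps)''(z)\nu_z\in\lielnd{\Dp}$, then raises the $x$-degree with $\nu_x^{m-1}$ and $x^{n-m}\nu_x$, and obtains arbitrary $r(z)$ only at the end by varying $s$ --- which silently relies on the (easy, degree-count) surjectivity of $s\mapsto((ps)''h')^{(m)}$ onto $\C[z]$. You instead kill the $z$-dependence first, reaching $x^d$, and re-inject arbitrary powers of $z$ via the mixed generators $\nu_{xz^j}$, after which only the trivial surjectivity of differentiation is needed. Your version makes the ``all of $\C[z]$'' step transparent, at the cost of invoking the correspondence of Proposition \ref{correspondents} for the functions $xz^j$ rather than staying with the standard fields $\nu_x,\nu_z$; both arguments produce an explicit bound ($m=\deg h$ for you, $m=\deg(p''h')$ in the paper), either of which suffices since the lemma only asserts the existence of some $m$.
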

\begin{proof}
By Lemma \ref{ExistsH} we have a non-constant $h(z)\in\tilde{I}$.
By (\ref{ideal}),  we get that $\nu_x(h) = h'(z)x  \in  \tilde{I}$ and $(p(z)s(z))''\nu_z(h'(z)x) = (p(z)s(z))''h'(z)x \in \tilde{I}$ for any $s \in \C[z]$. 
Let $m = \deg p''h'$ and $n \ge  m$.  Then  applying  (\ref{ideal}) $m-1$ times for $\mu = \nu_x$  we get
$$
\nu_x^{m-1}  ((p(z)s(z))'' h'(z)x) = ((ps)'' h' )^{(m-1)} (z)x^m \in \tilde{I}.
$$
Now apply (\ref{ideal}) once more for $\mu = x^{n-m} \nu_x$ and  get
$$
x^{n-m} \nu_x(((ps)''h')^{(m-1)}(z)x^m ) = ((ps)''h')^{(m)}(z) x^{n+1} \in \tilde{I},
̃$$
and thus varying $s(z)$ we get $r(z)x^{n+1} \in \tilde{I}$  for any $r \in \mathbb{C}[z]$.
\end{proof}

\begin{lemma}\label{AllStartingOne}
Let $\tilde{I}\neq 0$. Then $r(z)x^{n} \in \tilde{I}$ for all $r  \in \mathbb{C}[z]$ and $n \ge 1$.
\end{lemma}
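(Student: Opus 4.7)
The plan is to prove the lemma by descending induction on the exponent of $x$: I claim that if $r(z)x^{k}\in\tilde{I}$ for every $r\in\mathbb{C}[z]$ and some $k\ge 2$, then $r(z)x^{k-1}\in\tilde{I}$ for every $r\in\mathbb{C}[z]$. Iterating this from $k=m+1$ (available by Lemma~\ref{AllStartingN}) down to $k=2$ then gives $r(z)x^{n}\in\tilde{I}$ for all $r\in\mathbb{C}[z]$ and all $n\ge 1$.

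For the descent step, I would bracket $\nu:=\nu_{rx^{k}}\in I$ with the vector fields $\nu_{yz^{s}}$, which lie in $\lielnd{\Dp}$ for every $s\ge 0$ by Proposition~\ref{correspondents}. Reading off the components of $\nu$ from $i_{\nu}\omega_{p}=d(rx^{k})$ and simplifying via formula~(\ref{bracket}) together with the relation $yx^{k}=p(z)x^{k-1}$, one obtains
$$f_{[\nu,\nu_{yz^{s}}]}=\nu(yz^{s})=-x^{k-1}\bigl[k\,r(z)\bigl(z^{s}p(z)\bigr)'+z^{s}r'(z)p(z)\bigr]\in\tilde{I}.$$
Consequently $x^{k-1}\cdot V\subset\tilde{I}$, where $V\subset\mathbb{C}[z]$ denotes the $\mathbb{C}$-linear span of the polynomials $A_{r,s}:=k\,r\,(z^{s}p)'+z^{s}r'p$ as $r$ and $s$ vary.

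The main obstacle is to show $V=\mathbb{C}[z]$, and this is where the hypothesis that $p$ has only simple roots becomes essential. Specializing to $r=z^{t}$ gives $A_{z^{t},s}=kz^{s+t}p'+(ks+t)z^{s+t-1}p$; fixing the total degree $u=s+t$ and letting $s$ vary, the coefficient $(k-1)s+u$ of $z^{u-1}p$ is nonconstant (this uses $k\ge 2$), so two such elements with different values of $s$ differ by a nonzero multiple of $z^{u-1}p$. This yields $p\mathbb{C}[z]\subset V$, and feeding this back into $A_{1,s}=kz^{s}p'+sz^{s-1}p$ shows $p'\mathbb{C}[z]\subset V$. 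Since $p$ has only simple roots, $\gcd(p,p')=1$, and B\'ezout's identity gives $p\mathbb{C}[z]+p'\mathbb{C}[z]=\mathbb{C}[z]$; hence $V=\mathbb{C}[z]$, which completes the descent step and the proof.
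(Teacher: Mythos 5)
Your proof is correct and follows essentially the same route as the paper: a descent on the exponent of $x$ obtained by bracketing with the vector fields $\nu_{yz^{s}}$, followed by the observation that $\gcd(p,p')=1$ forces the resulting span of $z$-polynomials to be all of $\mathbb{C}[z]$. Your two-parameter family $A_{r,s}$ specializes (at $s=0$ and at $r=1$) to exactly the two one-parameter families the paper combines, so the arguments coincide up to organization; the only blemish is a dropped factor of $k$ in $A_{1,s}$, which does not affect the conclusion.
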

\begin{proof}
We prove this lemma by induction. 
By Lemma \ref{AllStartingN} we know that  $r(z)x^{n} \in \tilde{I}$ for any $r  \in \mathbb{C}[z]$ and $n \ge m$. 
Using (\ref{ideal}) we have the following:
$$
\nu_y(z^{j}x^{m}) = m p'(z)z^{j}x^{m-1} + jp(z)z^{j-1} x^{m-1} \in \tilde{I}
$$
for all $j \in \mathbb{N} \cup \{ 0 \}$. On the other hand, since $x^m \in \tilde{I}$,  from (\ref{corresp}) it follows that  
$x^{m-1}\nu_x \in \tilde{I}$.  Hence,  by (\ref{bracketformula}) and (\ref{ideal}), 
$$
x^{m-1} \nu_x(yz^j) = p'(z)z^j x^{m-1} + jp(z)z^{j-1}x^{m-1} \in \tilde{I}
$$
for all $j \in \mathbb{N} \cup \{ 0 \}$. By taking suitable linear combinations of the above expressions we see that
$x^{m-1} \cdot (p'(z)) \subset \tilde{I}$ and $x^{m-1} \cdot  (p(z)) \subset \tilde{I}$, where $(p'(z))$ and $(p(z))$ denote the ideals
in $\mathbb{C}[z]$ generated by $p'(z)$ and $p(z)$ respectively. Since $p$ has only simple roots,  the ideal $(p'(z),p(z))$ generated by both $p'(z)$ and $p(z)$ is equal to 
$\mathbb{C}[z]$ and thus $x^{m-1} \cdot (p'(z),p(z)) =x^{m-1} \cdot \mathbb{C}[z] \subset \tilde{I}$. Therefore, $r(z)x^{n} \in \tilde{I}$ for any $r  \in \mathbb{C}[z]$ and $n \ge m-1$. The claim follows.
\end{proof}

\begin{proof}[Proof of Proposition \ref{simple}]
	Let $I$ be a nontrivial ideal of $\lielnd{\Dp}$ and let $\tilde{I}$ be the corresponding ideal on the level of functions.  By Lemma \ref{AllStartingOne} we have that $r(z)x^{n} \in \tilde{I}$ for any $r  \in \mathbb{C}[z]$ and $n\in\N$.
	Analogously,  interchanging $x$ and $y$ in Lemmas  \ref{AllStartingN} and \ref{AllStartingOne}, we get that $r(z)y^{n} \in \tilde{I}$ for any $r(z) \in \mathbb{C}[z]$ and $n \in \mathbb{N}$. 
	Since $r(z)x \in \tilde{I}$ for any $r(z)$, from (\ref{bracketformula}) it follows that $\nu_y(r(z)x) = (p(z)r(z))' \in \tilde{I}$ for any $r(z)$. Thus,
	 $\tilde{I}$ contains all functions that correspond to vector fields in $\lielnd{\Dp}$ or, equivalently,  $I = \lielnd{\Dp}$,
	which concludes the proof.
\end{proof}

\section{Proof of Proposition \ref{mainprop}.}\label{last}

Let $p,q$ be two polynomials with simple zeroes and let $\deg p \ge 2$ and  $\deg q \geq 3$ unless stated otherwise. Furthermore, let 
$\deg p \le \deg q$. Let us denote by $\Dp=\lbrace xy=p(z) \rbrace$ and by
$\Dq=\lbrace uv=q(w)\rbrace$. On $\Dq$ we introduce similar vector fields as on $\Dp$: 
\[
 \nu_u  =  q'(w)\frac{\partial}{\partial v} + u \frac{\partial}{\partial w}, \quad
\nu_v  =  q'(w)\frac{\partial}{\partial u} + v \frac{\partial}{\partial w}, \quad
\nu_w  =  u\frac{\partial}{\partial u} - v \frac{\partial}{\partial v}.
\]

We start with a simple Lemma:
\begin{lemma} \label{lndtolnd}
	Let $F$ be homomorphism from either $\lielnd{\Dp}$ or $\Ve^0(\Dp)$ to $\Ve^0(\Dp)$ and let $\nu\in\Ve^0(\Dp)$.
 The kernel of $\nu$ is mapped by $G$ onto the kernel of $F(\nu)$. In particular, if $\nu$ is locally nilpotent, then $F(\nu)$ is locally nilpotent.
\end{lemma}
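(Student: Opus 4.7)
The plan is to extract both assertions from the single intertwining identity
$$F \circ \ad_\nu = \ad_{F(\nu)} \circ F,$$
which holds because $F$ is a Lie-algebra homomorphism, and then to use Proposition~\ref{prop8} to convert statements about $\ad$-nilpotence on $\Ve^0(\Dp)$ into statements about nilpotence of the corresponding derivation on $\mathcal O(\Dp)$.

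First I would settle the kernel claim. Reading $\ker\nu$ as the centralizer $\ker \ad_\nu = \{\mu : [\nu,\mu]=0\}$ inside the domain of $F$ — which, under the duality $\mathcal O(\Dp)/\mathbb C \isorightarrow \Ve^0(\Dp)$ of Proposition~\ref{prop8} and via (\ref{bracket}), corresponds exactly to the derivation-kernel of $\nu$ on $\mathcal O(\Dp)/\mathbb C$ — the inclusion $F(\ker \ad_\nu) \subseteq \ker \ad_{F(\nu)}$ is immediate from $F([\nu,\mu]) = [F(\nu), F(\mu)]$. For the reverse inclusion I would use that $F$ is injective (in the applications of the lemma it is in fact an isomorphism, hence surjective): every $\eta \in \ker \ad_{F(\nu)}$ lifts to a unique $\mu$ satisfying $F([\nu,\mu]) = [F(\nu),\eta] = 0$, and injectivity then forces $[\nu,\mu] = 0$.

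For the locally nilpotent statement, the crucial point is a Lie-algebraic characterization of LND. Proposition~\ref{prop8} together with (\ref{bracket}) identifies $\ad_\nu$ on $\Ve^0(\Dp)$ with $\nu$ acting as a derivation on $\mathcal O(\Dp)/\mathbb C$, so $\nu$ is locally nilpotent on $\mathcal O(\Dp)$ if and only if $\ad_\nu$ is locally nilpotent on $\Ve^0(\Dp)$. Iterating the intertwining identity gives $F \circ \ad_\nu^k = \ad_{F(\nu)}^k \circ F$; hence if $\nu$ is LND, then for each $\mu$ in the domain one has $\ad_{F(\nu)}^{k(\mu)}(F\mu) = 0$ for some $k(\mu)$, so $\ad_{F(\nu)}$ is locally nilpotent on the image $F(\mathrm{dom}\,F)$.

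The main obstacle is then to promote local nilpotence from the image of $F$ to all of $\Ve^0(\Dp)$. Here I would invoke Proposition~\ref{correspondents}: the quotient $\Ve^0(\Dp)/\lielnd{\Dp}$ is finite-dimensional, spanned by the vector fields $z^i\nu_z$ with $0 \le i \le \deg p - 3$, and a direct calculation with the Poisson formula (\ref{bracketformula}) shows that $\ad_\eta$ sends every $z^i\nu_z$ into $\lielnd{\Dp}$ whenever $\eta \in \lielnd{\Dp}$. Since in the applications $F$ is an isomorphism (so its image contains $\lielnd{\Dp}$), a single application of $\ad_{F(\nu)}$ lands any element of $\Ve^0(\Dp)$ inside the image, where the previous step then finishes the argument. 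One final application of the LND characterization returns the statement that $F(\nu)$ is locally nilpotent.
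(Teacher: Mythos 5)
Your argument is correct, but it takes a genuinely different route from the paper's, which is a two-line computation on the function side. The paper simply notes that $G$ intertwines the Poisson brackets, so that for $f\in\ker\nu$ one has $F(\nu)(G(f))=\{f_{F(\nu)},G(f)\}=G(\{f_\nu,f\})=G(\nu(f))=0$, and iterating the same identity gives $F(\nu)^i(G(f))=G(\nu^i(f))=0$, whence $F(\nu)$ is locally nilpotent. Under the duality $\mathcal O(\Dp)/\mathbb C\isorightarrow\Ve^0(\Dp)$ of Proposition~\ref{prop8} this is literally your intertwining relation $F\circ\ad_\nu^k=\ad_{F(\nu)}^k\circ F$, so the core mechanism is the same; what differs is the endgame. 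The paper passes from ``$F(\nu)^i$ kills $\operatorname{im}G$'' directly to ``$F(\nu)$ is locally nilpotent on $\mathcal O(\Dq)$,'' which tacitly uses that the image of $G$ generates (in the applications $F$ is an isomorphism, so this is harmless). You instead set up an intrinsic Lie-algebraic characterization ($\nu$ is LND iff $\ad_\nu$ is locally nilpotent on $\Ve^0$) and then close the gap between $\operatorname{im}F$ and all of $\Ve^0(\Dq)$ by observing that the derived algebra lands in $\lielnd{\Dq}$ — a fact which does hold (with $f_{\eta}$ of the form (\ref{functions}) and $f_{z^i\nu_z}=z^{i+1}/(i+1)$, formula (\ref{bracketformula}) gives $\{f_\eta,z^{i+1}\}=(i+1)z^i\bigl(y(f_\eta)_y-x(f_\eta)_x\bigr)$, again of the form (\ref{functions}), and $[z^i\nu_z,z^j\nu_z]=0$). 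Your version is heavier but more careful on exactly the point the paper glosses over; the price is that you lean on injectivity/surjectivity of $F$ in several places (the ``onto'' half of the kernel claim, and $\operatorname{im}F\supseteq\lielnd{\Dq}$), which the bare statement of the lemma does not grant but which holds in every use the paper makes of it. Both readings of ``kernel'' (centralizer versus derivation-kernel) agree modulo constants via (\ref{bracket}), as you note, so that identification is fine.
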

\begin{proof}
	Let $G$ be the induced homomorphism on the level of functions.
	Let $f\in\ker\nu$ then by (\ref{bracket})
	$$F(\nu)(G(f)) = \lbrace f_{F(\nu)},G(f) \rbrace = \lbrace G(f_\nu),G(f) \rbrace = G(\lbrace\ f_\nu,f\rbrace) = G(\nu(f))=G(0)=0.$$
	If $\nu$ is  locally nilpotent then for every $f\in \mathcal{O}(\Dp)$ we have $\nu^i(f)=0$ for some $i\geq1$. Thus, by a similar calculation as above using (\ref{bracket}) 
	we get $F(\nu)^i(G(f))=0$. Therefore, for every $f\in  \mathcal{O}(\Dq)$ we have $F(\nu)^i(f)=0$ for some $i\geq 0$ meaning that $F(\nu)$ is locally nilpotent.
\end{proof}

For the rest of the paper we assume that $$F:\lielnd{\Dp} \quad  \isorightarrow \quad     \lielnd{\Dq}$$ 
is an isomorphism of Lie algebras and  $G$   
 is the induced 
isomorphism on the level of functions using the correspondence (\ref{functions}). The following three lemmas will be needed in the future. Recall that here $\deg(f) = (l,k)$ denotes the pair of min- and max-degree in $u$, where $f \in  \mathcal{O}(\Dq)$. The following Lemma is clear by direct calculations.

\begin{lemma} \label{lemBiDegree}
Let $f,g\in \mathcal{O}(\Dq)$ with $\deg(f) = (\alpha^-,\alpha^+)$ and $\deg(g) = (\beta^-,\beta^+)$. Then $\deg(fg) = (\alpha^- + \beta^-, \alpha^+ + \beta^+)$. Therefore, if $f$ is divisible by $g$ we get $\deg(f/g) = (\alpha^- - \beta^-, \alpha^+ - \beta^+)$. Let $p(t)$ be a polynomial of degree $n$. Then $\deg(p(f)) =(n\alpha^-,n\alpha^+)$.
\end{lemma}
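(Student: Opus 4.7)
My plan is to reduce the entire statement to a direct bookkeeping exercise using the uniqueness of the representation furnished by Lemma \ref{Step1}. Recall from that lemma that every $h \in \mathcal{O}(\Dq)$ is uniquely of the form $\sum_{i} c_i(w) u^i$ with $c_i(w) \in \mathbb{C}[w]$ and only finitely many terms nonzero; the bi-degree $\deg h$ simply records the first and last indices for which $c_i$ is nonzero.

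First I would verify multiplicativity. Write $f = \sum_{i=\alpha^-}^{\alpha^+} a_i(w) u^i$ and $g = \sum_{j=\beta^-}^{\beta^+} b_j(w) u^j$ and expand formally. For any index $k$, the coefficient of $u^k$ in $fg$ is $\sum_{i+j=k} a_i(w) b_j(w)$. Since $i \leq \alpha^+$ and $j \leq \beta^+$ force $i+j \leq \alpha^+ + \beta^+$ with equality only for $(i,j) = (\alpha^+, \beta^+)$, the coefficient of $u^{\alpha^+ + \beta^+}$ reduces to the single term $a_{\alpha^+}(w) b_{\beta^+}(w)$, which is nonzero because $\mathbb{C}[w]$ is an integral domain; the analogous computation works at the bottom. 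No power of $u$ outside $[\alpha^- + \beta^-, \alpha^+ + \beta^+]$ appears. By uniqueness this yields $\deg(fg) = (\alpha^- + \beta^-, \alpha^+ + \beta^+)$, and the divisibility formula is an immediate consequence of $f = g \cdot (f/g)$.

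For the polynomial evaluation $p(t) = \sum_{i=0}^n c_i t^i$ with $c_n \neq 0$, I would iterate the product formula to obtain $\deg(f^i) = (i\alpha^-, i\alpha^+)$, and then observe that the extremal powers of $u$ at the indices $n\alpha^\pm$ coming from $c_n f^n$ strictly dominate the contributions of the lower-degree terms $c_i f^i$ with $i < n$, so no cancellation occurs at the extreme indices; in particular the top coefficient equals $c_n \, a_{\alpha^+}(w)^n \neq 0$, and similarly at the bottom, giving $\deg(p(f)) = (n\alpha^-, n\alpha^+)$.

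I do not anticipate any genuine obstacle: once Lemma \ref{Step1} is invoked, the lemma collapses into elementary manipulations of Laurent-polynomial-like expressions over the integral domain $\mathbb{C}[w]$, with uniqueness handling all questions of well-definedness.
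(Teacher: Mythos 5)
The paper offers no argument for this lemma (it is declared ``clear by direct calculations''), and your reduction to the unique expansion $\sum_i a_i(w)u^i$ of Lemma \ref{Step1} is precisely that calculation. Your treatment of the first two claims is correct: the extremal coefficients of $fg$ are $a_{\alpha^-}b_{\beta^-}$ and $a_{\alpha^+}b_{\beta^+}$, nonzero because $\mathbb{C}[w]$ is an integral domain, and the divisibility statement follows from $f=g\cdot(f/g)$.

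The third claim is where your argument has a genuine gap. The ``strict domination'' step is only valid when $\alpha^-<0<\alpha^+$: if $\alpha^+\le 0$ then $i\alpha^+\ge n\alpha^+$ for all $i<n$, so every term $c_if^i$ may contribute to the coefficient of $u^{n\alpha^+}$, and symmetrically at the bottom when $\alpha^-\ge 0$. Indeed the assertion itself fails outside that range: for $f=u$ and $p(t)=t^2+1$ one has $\deg(p(f))=\deg(u^2+1)=(0,2)\ne(2,2)$, and for $f=v+1$ and $p(t)=t-1$ one gets $\deg(p(f))=(-1,-1)\ne(-1,0)$. This is not a purely cosmetic issue, because in the proof of Lemma \ref{lemdeg} the formula is invoked once with $\alpha^-=0$, namely for $\deg\bigl(q(w+ua_1(u))\bigr)$ where $\deg\bigl(w+ua_1(u)\bigr)=(0,n_1)$. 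The conclusion $(0,nn_1)$ is still correct there, but not for your stated reason: the $u^0$-coefficient of $q(w+ua_1(u))$ is $q(w)$, not $c_nw^n$, and it is nonzero because the constant-in-$u$ part $w$ is a non-constant polynomial (were it a constant root of $q$, the min-degree would jump). You should either restrict the third claim to $\alpha^-<0<\alpha^+$ and verify the boundary application separately, or add the observation that when $\alpha^-=0$ (resp.\ $\alpha^+=0$) the extremal coefficient of $p(f)$ equals $p(a_0(w))$, which is nonzero whenever $a_0$ is non-constant.
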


\begin{lemma}\label{lemdeg}
 Let $n=\deg(q)\geq 3$, and let $\varphi = (\varphi_u,\varphi_v,\varphi_w) \in \U(\Dq)=\U_u  \ast \U_v$. Let us denote $\deg(\varphi_u) = 
(\alpha^-,\alpha^+)$ and $\deg(\varphi_v)=(\beta^-,\beta^+)$. If $\alpha^-\geq -1$ then $\varphi_u=u$ and if $\beta^-\geq -1$ then $\varphi_v=v$ or $\varphi_v = 
u^{-1}q(w + ua(u))$ for some $a\in\C[u]$.
\end{lemma}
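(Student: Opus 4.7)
My plan is to put $\varphi$ into its normal form in the free product $\U(\Dq) = \U_u \ast \U_v$ and to track the minimum $u$-degree $\deg_u^-$ of each coordinate through successive left-compositions. Write $\varphi = \psi_k \circ \cdots \circ \psi_1$ as a reduced alternating word, where elements of $\U_u$ have the form $\eta_f \colon (u,v,w) \mapsto (u,\, q(w+f(u))/u,\, w+f(u))$ with $f \in u\C[u]$ of degree $\geq 1$, and elements of $\U_v$ are the symmetric $\tau_g$. Direct computation gives the composition rules $(\eta_f \circ \varphi)_u = \varphi_u$, $(\eta_f \circ \varphi)_w = \varphi_w + f(\varphi_u)$, $(\eta_f \circ \varphi)_v = q(\varphi_w + f(\varphi_u))/\varphi_u$, and symmetrically for $\tau_g$. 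In particular $\varphi_u = u$ iff $\varphi$ has no $\U_v$-factor (that is, $\varphi \in \U_u$), and $\varphi_v = v$ iff $\varphi \in \U_v$.

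The heart of the argument is an inductive estimate on the length $k$. Since $n = \deg q \geq 3$, we have the valuation identity $\deg_u^- q(h) = n\cdot \deg_u^- h$ whenever $\deg_u^- h < 0$. I claim: (i) if $\psi_k \in \U_v$ then $\deg_u^- \varphi_u \leq 1-n$; and (ii) if $\psi_k \in \U_u$ and $k \geq 2$ then $\deg_u^- \varphi_v \leq -(n-1)^2$. The base case of (i), $\varphi = \tau_g$: since $v = q(w)/u$ has $\deg_u^- = -1$, $g(v)$ has $\deg_u^- = -\deg g$, hence $q(w+g(v))/v$ has $\deg_u^- = 1 - n\deg g \leq 1 - n$. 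For the inductive step of (i), apply (ii) to $\varphi^{(k-1)} = \psi_{k-1} \circ \cdots \circ \psi_1$ (whose top $\psi_{k-1} \in \U_u$) to get $\deg_u^- \varphi^{(k-1)}_v \leq -(n-1)^2$, or use the base subcase $k-1=1$ giving $\deg_u^- \varphi^{(k-1)}_v = -1$; the formula for $(\tau_{g_k} \circ \varphi^{(k-1)})_u$ then yields $\deg_u^- \varphi_u \leq (n e_k - 1)\cdot\deg_u^- \varphi^{(k-1)}_v \leq 1 - n$, since $\deg_u^- \varphi^{(k-1)}_v \leq -1$ and $n e_k - 1 \geq n - 1$. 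The step (ii) is symmetric, using (i) for $\varphi^{(k-1)}$.

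Combining these estimates: if $\alpha^- \geq -1$, then (i) forbids $\psi_k \in \U_v$; as composing with $\eta$-factors on the left preserves $\varphi_u$, no $\U_v$-factor can occur anywhere in the word, so $\varphi \in \U_u$ and $\varphi_u = u$. If $\beta^- \geq -1$, then (ii) forbids $\psi_k \in \U_u$ with $k \geq 2$. The remaining possibilities are either $k \leq 1$ (giving directly $\varphi_v \in \{v,\, u^{-1}q(w+ua(u))\}$) or $\psi_k \in \U_v$, so $\varphi_v = \varphi^{(k-1)}_v$ with $\varphi^{(k-1)}$ having top factor in $\U_u$; applying (ii) to $\varphi^{(k-1)}$ forces $k-1 \leq 1$, and thus $\varphi^{(k-1)}$ is the identity (giving $\varphi_v = v$) or a single $\eta_{f_1}$ (giving $\varphi_v = q(w+f_1(u))/u = u^{-1}q(w+ua(u))$ with $a = f_1/u$).

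The main obstacle is the careful bookkeeping of $\deg_u^-$ through compositions. Beyond $\deg_u^- \varphi^{(k-1)}_u$ and $\deg_u^- \varphi^{(k-1)}_v$, one must also control $\deg_u^- \varphi^{(k-1)}_w$ and verify that the leading $u$-term of the sum $\varphi^{(k-1)}_w + g_k(\varphi^{(k-1)}_v)$ (and its analog in step (ii)) does not accidentally cancel. This is handled by tracking the leading coefficients, which are products of non-zero constants and non-zero polynomials in $w$ obtained from the leading coefficients of $q, f_i, g_i$; the hypothesis $n \geq 3$ ensures the dominant contribution comes from the $q$-amplification and survives division by $\varphi^{(k-1)}_v$ or $\varphi^{(k-1)}_u$.
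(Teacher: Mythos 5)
Your proposal is correct and follows essentially the same route as the paper: decompose $\varphi$ as a reduced alternating word in $\U_u \ast \U_v$ and run an induction on the word length tracking the extremal $u$-degrees of the three components, with bounds ($1-n$ and $-(n-1)^2$) that agree exactly with the paper's computed values $\alpha_2^- = 1-nn_2$ and $\beta_3^- = (nn_3-1)(1-nn_2)$. The one point you leave informal --- controlling the min-degree of the $w$-component so that no cancellation occurs in $\varphi^{(k-1)}_w + g_k(\varphi^{(k-1)}_v)$ --- is handled in the paper simply by strengthening the induction hypothesis to include the $w$-component (namely $\alpha_{2k}^- < \delta_{2k}^- \le 0$, $\beta_{2k-1}^- < \delta_{2k-1}^- \le 0$ and the analogous max-degree inequalities), after which the two summands have distinct extremal degrees and no leading-coefficient analysis is needed.
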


\begin{proof} Let $\varphi=\varphi_N\circ\ldots\circ\varphi_1$ be the reduced composition  of automorphisms, where
$\varphi_i \in \U_u$ or $\varphi_i \in \U_v$.
 This means that 
$\varphi_i$ 
is given either by (where $a_i$ is a nonzero polynomial)
\begin{align*}&\varphi_i(u,v,w) = (u,\ u^{-1}q(w+ua_i(u)),\ w + ua_i(u)) \quad \text{or} \\ &\varphi_i(u,v,w)= (v^{-1}q(w+va_i(v)),\ v,\ w+va_i(v))\end{align*}
depending whether it is in $\U_u$ or in $\U_v$. 

First we consider the case when $\varphi_1\in \U_u$, meaning that $\varphi_i\in \U_u$ when $i$ is odd and 
$\varphi_i\in \U_v$ when $i$ is even. Let us take a look how the degree of the components evolves when composing with the next
automorphism. 
Let 
$$ (u_i,v_i,w_i) = \varphi_i\circ \ldots\circ\varphi_1(u,v,w)$$ and $$(\deg u_i, \deg v_i, \deg w_i)= 
((\alpha_i^-,\alpha_i^+),(\beta_i^-,\beta_i^+),(\delta_i^-,\delta_i^+))$$
We will see that the complexity of the components (measured by the degree) is growing. Looking at the first two compositions we get:
\begin{eqnarray*}
 (u_0,v_0,w_0)&=&(u,v,w)\\
 (u_1,v_1,w_1)&=&(u,u^{-1}q(w+ua_1(u)),w+ua_1(u))\\
 (u_2,v_2,w_2)&=&(v_1^{-1}q(w_1+v_1a_2(v_1)),v_1,w_1+v_1a_2(v_1))
\end{eqnarray*}
We will use Lemma \ref{lemBiDegree} to calculate the degrees of the component. Lemma \ref{lemBiDegree} will be used for all calculations in this proof. Let $n_i=\deg a_i+1$, then we get:
\begin{eqnarray*}
 \Big((\alpha_0^-,\alpha_0^+),(\beta_0^-,\beta_0^+),(\delta_0^-,\delta_0^+)\Big) &=& \Big((1,1),(-1,-1),(0,0)\Big)\\
\Big((\alpha_1^-,\alpha_1^+),(\beta_1^-,\beta_1^+),(\delta_1^-,\delta_1^+)\Big) &=& \Big((1,1),(-1,nn_1-1),(0,n_1)\Big)
\end{eqnarray*}
The computation of $\Big((\alpha_2^-,\alpha_2^+),(\beta_2^-,\beta_2^+),(\delta_2^-,\delta_2^+)\Big)$ is more tedious: Since $\deg w_1 = (0,n_1)$ and $\deg (v_1a_2(v_1)) = (-n_2,n_2(nn_1-1))$ we see that $\deg(w_1 + v_1a_2(v_1))= \deg (v_1a_2(v_1))$. Therefore, $\deg(q(w_1 + v_1a_2(v_1))) = (-nn_2,nn_2(nn_1-1))$. Together with $\deg v_1 = (-1, nn_1 - 1)$ this yields $(\alpha_2^-,\alpha_2^+) = (-nn_2+1,  (nn_2-1)(nn_1-1))$. Similar considerations show

$$ \Big((\alpha_2^-,\alpha_2^+),(\beta_2^-,\beta_2^+),(\delta_2^-,\delta_2^+)\Big) \quad \quad =$$ 
$$ \Big((-nn_2+1,(nn_2-1)(nn_1-1)),(-1,nn_1-1),(-n_2,n_2(nn_1-1))\Big).$$

These calculations show the claim for the case $N=1,2,3$ with $\varphi_1\in \U_u$. For larger $N$ it is enough to show that $\alpha_i^-<-1$ and $\beta_i^-<-1$ 
for all $4\leq i\leq N$. We show it by induction. We claim that 
$\alpha_{2k}^-<\delta_{2k}^- \le 0$, $\alpha_{2k}^+>\delta_{2k}^+ \ge 0$, $\beta_{2k-1}^-<\delta_{2k-1}^-\le 0$ and $\beta_{2k-1}^+>\delta_{2k-1}^+ \ge 0$ for $k\geq 1$. For $k=1$ 
these inequalities follow from the above. For the step $(k-1)\rightsquigarrow k$ we need the following computations:
\begin{eqnarray*}
&&(u_{2k-1},v_{2k-1},w_{2k-1}) \quad = \\&& (u_{2k-2},u_{2k-2}^{-1}q(w_{2k-2}+u_{2k-2}a_{2k-1}(u_{2k-2})),w_{2k-2}+u_{2k-2}a_{2k-1}(u_{2k-2}))\\&&
 (u_{2k},v_{2k},w_{2k}) \quad = \\ &&(v_{2k-1}^{-1}q(w_{2k-1}+v_{2k-1}a_{2k}(v_{2k-1})),v_{2k-1},w_{2k-1}+v_{2k-1}a_{2k}(v_{2k-1}))
\end{eqnarray*}

Let us, for example, make the calculation of $\deg v_{2k-1} = (\beta_{2k-1}^-,\beta_{2k-1}^+)$. We have $\deg w_{2k-2} = (\delta_{2k-2}^-,\delta_{2k-2}^+)$ and $$\deg (u_{2k-2}a_{2k-1}(u_{2k-2})) = (n_{2k-1}\alpha_{2k-2}^-,n_{2k-1}\alpha_{2k-2}^+).$$ Since $\alpha_{2k-2}^- < \delta_{2k-2}^-$ and $\alpha_{2k-2}^+>\delta_{2k-2}^+$ by induction, we get $$\deg(w_{2k-2} + u_{2k-2}a_{2k-1}(u_{2k-2})) = (n_{2k-1}\alpha_{2k-2}^-,n_{2k-1}\alpha_{2k-2}^+).$$
Together with $$\deg u_{2k-2} = (\alpha_{2k-2}^-,\alpha_{2k-2}^+)$$ we conclude that
$$ (\beta_{2k-1}^-, \beta_{2k-1}^+) = (nn_{2k-1}\alpha_{2k-2}^- -\alpha_{2k-2}^-, nn_{2k-1}\alpha_{2k-2}^+-\alpha_{2k-2}^+).$$

Similar calculations show:
\begin{eqnarray*}
  && \Big((\alpha_{2k-1}^-,\alpha_{2k-1}^+),(\beta_{2k-1}^-,\beta_{2k-1}^+),(\delta_{2k-1}^-,\delta_{2k-1}^+)\Big) \quad =\\ 
&&\Big((\alpha_{2k-2}^-,\alpha_{2k-2}^+),((nn_{2k-1}-1)\alpha_{2k-2}^-, (nn_{2k-1}-1)\alpha_{2k-2}^+),\\ && \hspace{4cm}(n_{2k-1}\alpha_{2k-2}^-,n_{2k-1}\alpha_{2k-2}^+)\Big)\\
  && \Big((\alpha_{2k}^-,\alpha_{2k}^+),(\beta_{2k}^-,\beta_{2k}^+),(\delta_{2k}^-,\delta_{2k}^+)\Big) \quad =\\ 
&&\Big(((nn_{2k}-1)\beta_{2k-1}^-, (nn_{2k}-1)\beta_{2k-1}^+),(\beta_{2k-1}^-,\beta_{2k-1}^+),\\ && \hspace{4cm}(n_{2k}\beta_{2k-1}^-,n_{2k}\beta_{2k-1}^+)\Big)
\end{eqnarray*}

Since $\alpha_{2k-2}^-<0$ by induction, we see that $$\beta_{2k-1}^- = (nn_{2k-1}-1)\alpha_{2k-2}^- < n_{2k-1}\alpha_{2k-2}^- $$
is negative because  $n_{2k-1}\alpha_{2k-2}^- = \delta_{2k-1}^- < 0.$   The inequalities $\alpha_{2k}^-<\delta_{2k}^- \le 0$, $\alpha_{2k}^+>\delta_{2k}^+ \ge 0$ and $\beta_{2k-1}^+>\delta_{2k-1}^+ \ge 0$ follow in a similar way. From the same calculations we  deduce that 
$\alpha_i^-\leq \alpha_{i-1}^-$, and $\beta_i^-\leq 
\beta_{i-1}^-$. Together with the fact that $\alpha_2^-<-1$ and $\beta_3^-<-1$ this leads as desired to $\alpha_i^-<-1$ and $\beta_i^-<-1$ 
for all $4\leq i\leq N$. Therefore, the claim of the Lemma follows in the case 
$\varphi_1\in \U_u$.

For the case $\varphi_1\in \U_v$ a similar calculation shows that $\varphi_u = u$ whenever $\alpha^-\geq -1$ and that $\varphi_v = v$ whenever $\beta^-\geq -1$.
\end{proof}

\begin{lemma}\label{new1}
	Let $\varphi: \Dp \to \Dq$ be an isomorphism and $f\in \mathcal{O}(\Dp)$. Then $\Ad(\varphi)\nu_f = \nu_{\tilde{f}}$, for some 
	$\tilde{f} \in  \mathcal{O}(\Dq)$.  Moreover, 
	 $f = \frac{\varphi^*\tilde{f}}{k(\varphi)}$, where $k(\varphi) \in \C$ is given by $\varphi^*\omega_q = k(\varphi)\omega_p$. 
\end{lemma}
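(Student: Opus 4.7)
The plan is to exploit that the top form on each Danielewski surface is essentially unique. Since $\Omega^2(\Dq)$ is free of rank one generated by $\omega_q$ (Proposition \ref{proposition2}(b)) and $\varphi$ is an isomorphism, $\varphi^*\omega_q$ is a nowhere-vanishing global section of $\Omega^2(\Dp)$; writing it as $k(\varphi)\cdot\omega_p$ forces $k(\varphi)\in\mathcal{O}(\Dp)^\times = \mathbb{C}^*$. This explains the existence (and constancy) of $k(\varphi)$ and gives the normalization $\varphi^*\omega_q = k(\varphi)\omega_p$.

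Next I would invoke the standard naturality of contraction under pushforward. For the isomorphism $\varphi\colon \Dp\to\Dq$, the operator $\Ad(\varphi)$ is by definition the pushforward of vector fields, and for any differential form $\mu$ on $\Dq$ one has the identity $\varphi^*(i_{\Ad(\varphi)\theta}\,\mu) = i_\theta(\varphi^*\mu)$ (a direct point-wise computation from the chain rule). Applying this to $\theta = \nu_f$ and $\mu = \omega_q$ and using $i_{\nu_f}\omega_p = df$, I obtain
$$\varphi^*\bigl(i_{\Ad(\varphi)\nu_f}\,\omega_q\bigr) = i_{\nu_f}\bigl(\varphi^*\omega_q\bigr) = k(\varphi)\cdot i_{\nu_f}\omega_p = k(\varphi)\, df.$$

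Since $\varphi^*$ commutes with the exterior derivative and is injective, the displayed equation forces $d\bigl(i_{\Ad(\varphi)\nu_f}\omega_q\bigr)=0$. By Proposition \ref{proposition4} applied to $\Dq$, every closed $1$-form on $\Dq$ is exact, so there exists $\tilde f\in\mathcal{O}(\Dq)$ with $i_{\Ad(\varphi)\nu_f}\omega_q = d\tilde f$; by definition this means $\Ad(\varphi)\nu_f = \nu_{\tilde f}$. For the explicit relation, the same displayed equation reads $d(\varphi^*\tilde f) = \varphi^*(d\tilde f) = k(\varphi)\,df$, so $\varphi^*\tilde f$ and $k(\varphi) f$ differ by a constant, which can be absorbed into the ambiguity of $\tilde f$ modulo $\mathbb{C}$, giving $f = \varphi^*\tilde f/k(\varphi)$. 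There is no serious obstacle in this argument: the non-formal inputs are just the freeness of $\Omega^2$, the fact that $\mathcal{O}(\Dp)^\times = \mathbb{C}^*$, and the exactness of closed $1$-forms recorded in Proposition \ref{proposition4}.
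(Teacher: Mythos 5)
Your proof is correct and is essentially the paper's argument run in the opposite direction: the paper starts from $\tilde f$ and, via the same naturality identity $\varphi^*\bigl(i_{\Ad(\varphi)\theta}\mu\bigr)=i_\theta(\varphi^*\mu)$ together with $\varphi^*\omega_q=k(\varphi)\omega_p$, verifies in one chain of equalities that $f=\varphi^*\tilde f/k(\varphi)$ is the function corresponding to $\Ad(\varphi)^{-1}\nu_{\tilde f}$, so existence of $\tilde f$ comes for free from the bijectivity of $\varphi^*$. Your forward version additionally needs, and correctly supplies, the exactness of closed $1$-forms from Proposition \ref{proposition4} to produce $\tilde f$, and your remark that $k(\varphi)\in\mathcal{O}(\Dp)^\times=\mathbb{C}^*$ makes explicit a point the paper leaves unstated.
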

\begin{proof}
	In order to find the corresponding function $f \in \mathcal{O}(\Dp)$ of the vector field $\Ad(\varphi)^{-1}\nu_{\tilde{f}}$ we need to find 
	$f$ such that $d f = i_{\Ad(\varphi)^{-1}\nu_{\tilde{f}}}\omega_p$.
	The calculation
		$$ d\left(\frac{\varphi^*\tilde{f}}{k(\varphi)}\right) = \varphi^*\left( \frac{d\tilde{f}}{k(\varphi)}\right) =\varphi^*\left( \frac{i_{\nu_{\tilde{f}}} \omega_q}{k(\varphi)}\right)  = \varphi^* \left(i_{\nu_{\tilde{f}}}\left( \frac{\omega_q}{k(\varphi)}\right)\right) $$
	$$ = \varphi^*i_{\nu_{\tilde{f}}}\left( (\varphi^{-1})^*\omega_p\right) = i_{(\Ad(\varphi))^{-1}\nu_{\tilde{f}}}\omega_p
	= i_{\nu_f} \omega_p. $$	
	shows that $f = \frac{\varphi^*\tilde{f}}{k(\varphi)}$ is the desired formula.
\end{proof}


In the next two lemmas we will show that there is an automorphism $\varphi : \Dq \isorightarrow \Dq$ such that up to composition with 
$\Ad(\phi)$, $F$ has a certain form. 

\begin{lemma}\label{lndstandard}
 Up to composition with some automorphism $\Ad(\varphi)$ of $\lielnd{\Dq}$ we have $G(x)=f(u)$ for some polynomial $f \in \mathbb{C}[u]$. 
\end{lemma}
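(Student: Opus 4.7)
The plan is to exploit the fact that $\nu_x$ is a distinguished locally nilpotent element of $\lielnd{\Dp}$ whose corresponding function is $f_{\nu_x} = -x$ (take $h(x)=x$ in (\ref{corresp})). Since $G(f_\nu) = f_{F(\nu)}$ by definition of $G$, the identity $G(x) = f(u) \in \mathbb{C}[u]$ will follow if we can arrange, after a suitable conjugation, that $F(\nu_x)$ lies in the Lie algebra of $\U_u$.

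First I would apply Lemma \ref{lndtolnd} to conclude that $F(\nu_x)$ is locally nilpotent on $\Dq$, and hence integrates to a one-parameter unipotent subgroup $H \subset \Aut(\Dq)$. By Proposition \ref{groupstructure}(c), $H$ is conjugate via some $\varphi \in \Aut(\Dq)$ to a subgroup of $\U_u$ or of $\U_v$. In the latter case, I would precompose $\varphi$ with the involution $\iota \colon (u,v,w) \mapsto (v,u,w)$, which is well-defined on $\Dq$ by the symmetry of the equation $uv = q(w)$ and which interchanges $\U_u$ and $\U_v$. Thus, after replacing $F$ by $\Ad(\varphi) \circ F$, one may assume $F(\nu_x) \in \Lie(\U_u)$.

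A direct computation of the exponentials $\alpha_{u,f}$ with $f \in u\mathbb{C}[u]$ shows that $\Lie(\U_u) = \{g(u)\nu_u \mid g \in \mathbb{C}[u]\}$ (the vector field of $\alpha_{u,ug}$ is $g(u)\nu_u$). Thus $F(\nu_x) = g(u)\nu_u$ for some $g \in \mathbb{C}[u]$. Letting $h$ be an antiderivative of $g$, the analogue of (\ref{corresp}) on $\Dq$ yields $f_{g(u)\nu_u} = -h(u)$. Combining with $-G(x) = G(f_{\nu_x}) = f_{F(\nu_x)} = -h(u)$ gives $G(x) = h(u) \in \mathbb{C}[u]$, as required.

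The only delicate point is the reduction step: one has to use Proposition \ref{groupstructure}(c) inside the full group $\Aut(\Dq)$ (which is what matters, since $\Ad(\varphi)$ is an automorphism of $\lielnd{\Dq}$ for every $\varphi \in \Aut(\Dq)$), and one must invoke the involution $\iota$ to eliminate the $\U_v$ alternative. Both are immediate from the set-up; the remainder is a routine application of the function/vector-field dictionary.
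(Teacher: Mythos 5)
Your proposal is correct and follows essentially the same route as the paper: apply Lemma \ref{lndtolnd} to see that $F(\nu_x)$ is locally nilpotent, use Proposition \ref{groupstructure}(c) to conjugate it into $\U_u$ (the paper leaves the reduction from the $\U_v$ case implicit, whereas you make the involution $(u,v,w)\mapsto(v,u,w)$ explicit), and then read off $G(x)$ from the correspondence (\ref{corresp}). The extra details you supply (the identification $\Lie(\U_u)=\{g(u)\nu_u\}$ and the antiderivative step) are exactly what the paper compresses into its one-line conclusion.
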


\begin{proof}
 Since $\nu_x$ is locally nilpotent, $F(\nu_x)$ is also locally nilpotent by Lemma \ref{lndtolnd}. From Proposition \ref{groupstructure}, 
 it follows that $F(\nu_x)$ is conjugate to  $r(u)\nu_u$ by  $\Ad(\varphi)$ for some automorphism $\varphi: \Dq \to \Dq$,  where 
 $r \in \mathbb{C}[u]$.  Hence, by (\ref{corresp}) we get $G(x)=f(u)$.
\end{proof}

Note that equality $G(x)=f(u)$ implies that $F(\nu_x) = f'(u)\nu_u$.


\begin{lemma}\label{xy}
 Up to composition with an induced automorphism $\Ad(\varphi)$ of $\lielnd{\Dq}$ we have $G(x)=u$ and $G(y)=cv$ for some $c\in\C^*$.
\end{lemma}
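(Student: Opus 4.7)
Building on Lemma \ref{lndstandard}, which furnishes $G(x)=f(u)$ for some polynomial $f\in\C[u]$, the goal is to produce an automorphism $\Ad(\psi)$ of $\lielnd{\Dq}$, with $\psi\in\Aut(\Dq)$, whose composition with $F$ satisfies $G(x)=u$ and $G(y)=cv$ for some $c\in\C^*$.

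By Lemma \ref{lndtolnd}, $F(\nu_y)$ is locally nilpotent, and Proposition \ref{groupstructure}(c) gives $\psi\in\Aut^\circ(\Dq)$ and $r\in\C[t]$ with either $F(\nu_y)=\Ad(\psi)(r(u)\nu_u)$ or $F(\nu_y)=\Ad(\psi)(r(v)\nu_v)$. After possibly composing with the involution $(u,v,w)\mapsto(v,u,w)$ of $\Dq$, I may assume that $F(\nu_x)$ and $F(\nu_y)$ lie in the two distinct $\Aut^\circ(\Dq)$-conjugacy classes of one-parameter unipotent subgroups, i.e.\ $F(\nu_x)$ in the $\U_u$-class and $F(\nu_y)=\Ad(\psi)(r(v)\nu_v)$ in the $\U_v$-class. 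That this is possible — rather than having both end up in the same class — will be justified by analyzing the centralizers: since the centralizers of $\nu_x$ and $\nu_y$ in $\lielnd{\Dp}$ are the distinct abelian subalgebras $\C[x]\nu_x$ and $\C[y]\nu_y$, and $[\nu_x,\nu_y]=\nu_{p'(z)}\ne 0$, the free product structure $\U(\Dq)=\U_u\ast\U_v$ forces the images into different classes up to the involution.

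Replacing $F$ with $\Ad(\psi^{-1})\circ F$, I obtain $F(\nu_y)=r(v)\nu_v$, hence $G(y)=R(v)\in\C[v]$, while $F(\nu_x)=\Ad(\psi^{-1})(f'(u)\nu_u)$ has corresponding function $k(\psi^{-1})\,f(\psi_u)$ by Lemma \ref{new1}. For this to remain a polynomial in $u$ alone, $\psi_u$ itself must be polynomial in $u$ only; Lemma \ref{lemdeg} applied to the $\U(\Dq)$-part of $\psi$ then forces $\psi_u=u$, so $\psi\in\U_u\rtimes T$. The $\U_u$-component commutes with $\nu_u$, so it does not affect $F(\nu_x)$ and only shifts $R$ in a controllable way; the torus $T$ merely rescales. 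After this simultaneous normalization, $G(x)=\tilde f(u)$ and $G(y)=R(v)$ for polynomials $\tilde f,R$.

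Finally, $F([\nu_x,\nu_y])=[F(\nu_x),F(\nu_y)]$ combined with formula (\ref{bracketformula}) yields the identity $G(p'(z))=-\tilde f'(u)R'(v)q'(w)$. Iterating with the brackets $\{x^a,y\}$ and $\{x,y^b\}$ produces the relations $G(x^{a-1}p'(z))=\tfrac{1}{a}\,q'(w)g_a'(u)R'(v)$ and $G(y^{b-1}p'(z))=\tfrac{1}{b}\,q'(w)\tilde f'(u)R_b'(v)$, where $g_a=G(x^a)\in\C[u]$ and $R_b=G(y^b)\in\C[v]$. Comparing this system against the requirement that $G$ be a linear bijection on the function spaces $\mathcal{F}_p/\C\isorightarrow\mathcal{F}_q/\C$ of Proposition \ref{correspondents}, a degree comparison in $u$ and $v$ forces $\deg\tilde f=\deg R=1$. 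A final rescaling by a torus element yields $G(x)=u$ and $G(y)=cv$. The main obstacle is the simultaneous normalization in the third paragraph, where the degree bookkeeping of Lemma \ref{lemdeg} and the free product structure of $\U(\Dq)$ must be used in concert to pin down $\psi$ consistently with both $F(\nu_x)$ and $F(\nu_y)$.
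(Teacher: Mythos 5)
Your overall strategy (normalize $F(\nu_x)$ via Lemma \ref{lndstandard}, then normalize $F(\nu_y)$ by a further conjugation controlled by Lemma \ref{lemdeg}, exclude the case where both images land in the same class via the nonvanishing bracket, and finish with a degree count) matches the paper's in outline, but the central step is circular and the key input is missing. When you replace $F$ by $\Ad(\psi^{-1})\circ F$ to arrange $F(\nu_y)=r(v)\nu_v$, you write that ``for this to remain a polynomial in $u$ alone, $\psi_u$ must be polynomial in $u$ only.'' There is no reason the new $G(x)=k(\psi^{-1})\,f(\psi_u)$ should remain a polynomial in $u$: that normalization was achieved by one specific conjugator in Lemma \ref{lndstandard}, and an unrelated conjugator chosen to standardize $F(\nu_y)$ will in general destroy it. You are assuming exactly what has to be proved, namely that a single conjugator can be chosen compatibly with both vector fields. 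Likewise, the claim that the free product structure together with $[\nu_x,\nu_y]\neq 0$ forces $F(\nu_x)$ and $F(\nu_y)$ into different conjugacy classes is unjustified: two locally nilpotent fields can each be conjugate into $\U_u$ by \emph{different} elements and still have nonzero bracket, so no contradiction arises at the level of conjugacy classes alone.

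The missing idea is the paper's use of the relation $\nu_x^{m}(y)=0$ with $m=\deg p+1$. Transporting it through $F$ gives $f'(u)^m\nu_u^{m}(G(y))=0$, and together with the standing hypothesis $\deg p\le\deg q$ this forces $G(y)=\sum_{i=0}^{m-1}a_i(u)w^i+\lambda(u)v$, i.e.\ the min-degree of $G(y)$ in $u$ is at least $-1$. Writing $G(y)=g(\varphi_u)$ or $g(\varphi_v)$ for a conjugator $\varphi\in\U(\Dq)$, this lower bound is precisely the hypothesis needed to apply Lemma \ref{lemdeg}, which pins $\varphi$ down so that $G(y)$ must equal $g(u)$ (excluded by the bracket), $g(v)$, or $g(u^{-1}q(w+ua(u)))$; the same bound forces $\deg g=1$ since $v$ and $u^{-1}q(w+ua(u))$ both have min-degree $-1$ in $u$. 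Without this step Lemma \ref{lemdeg} has nothing to bite on, and your simultaneous normalization cannot be carried out. Finally, your closing ``degree comparison'' against the bijectivity of $G$ is left unproved; the paper gets $\deg f=1$ directly and cleanly from $\nu_y^{m}(x)=0$, hence $\nu_v^{m}(f(u))=0$.
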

\begin{proof}
By the previous Lemma we can assume that $G(x)=f(u)$ for some polynomial $f \in \mathbb{C}[u]$.
 Let $m=\deg p +1$. Thus, we have $\nu_x^m(y)=0$, and hence $F(\nu_x)^m(G(y))= f'(u)^m \nu_u^m(G(y)) =0$. From $\nu_u^m(G(y))=0$ and $\deg p\leq \deg q$ we 
 conclude that $G(y)=\sum_{i=0}^{m-1} a_i(u)w^i + \lambda(u) v$ for some $a_i\in\C[u]$ and $\lambda  \in\C[u]$. Hence, we have $\deg G(y) = (\alpha^-,\alpha^+)$ with 
$\alpha^-\geq -1$. Since $\nu_y$ is locally nilpotent, by Lemma \ref{lndtolnd}  $F(\nu_y)$ is also locally nilpotent. 
 Let us 
choose $\varphi\in\U(\Dq)$ such that $({\Ad(\varphi))^{-1}}F(\nu_y)$ is either in $\U_u$ or in $\U_v$ (see \cite[Theorem 2.15]{KL13}). This means by Lemma \ref{new1} that $(\Ad(\varphi))^{-1}G(y)$ is either a polynomial in $u$ or in $v$. Therefore,  $G(y)=g(\varphi_u)$ 
or $G(y)=g(\varphi_v)$ for $\varphi=(\varphi_u,\varphi_v,\varphi_w)$ and some polynomial $g$. Since $\alpha^-\geq -1$ and since $g$ is a polynomial either the min-degree in $u$ of $\varphi_u$ or $\varphi_v$ is greater or equal than $-1$, which is exactly the assumption of Lemma \ref{lemdeg}. Thus, either $G(y)=g(u)$, $G(y)=g(v)$ or $G(y)=g(u^{-1}q(w+ua(u)))$ for some $a\in\C[u]$. The first case can be excluded since $[\nu_x,\nu_y]\neq 0$ implies that
$[f'(u)\nu_u,F(\nu_y)] \neq 0$. In the latter two cases we directly see that $\deg g =1$, because the min-degree in $u$ of both $v$ and $u^{-1}q(w+ua(u))$ is equal to $-1$. Since the correspondence (\ref{corresp}) is only 
up to constants we have $g(t)=c_2t$ for some $c_2 \in \C$. In the second case we have $G(y) = c_2 v$. In the third case we define the automorphism 
$$\varphi:  (u,v,w) \mapsto (u,u^{-1}q(w+ua(u)),w+ua(u))$$
and after composition with $(\Ad(\varphi))^{-1}$ we get $G(x)=f(u)$ and $G(y)=c_2v$. 

Since $\nu_y^m(x)=0$ we have that $\nu_v^m(f(u))=0$. Thus, $\deg f = 1$ and we have $f(u)=c_1u$ for some $c_1\in\C$. After composing
$G$ with the induced map of
$$\psi(u,v,w)=(c_1u,c_1^{-1}v,w)$$
the claim follows.
\end{proof}

Since now on we assume $F$ and respectively $G$ to be as in Lemma \ref{xy}.

\begin{lemma}\label{z}
 We have $G(p'(z))=cq'(w)$. Moreover, there is a basis $\{ h_i(w) \mid i \ge 0 \}$ of $\C[w]$ such that $G((p(z)z^i)')=c(q(w)h_i(w))'$.
\end{lemma}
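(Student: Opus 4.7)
For the first identity, using (\ref{bracket}) and $f_{\nu_y} = y$ from (\ref{corresp}), I compute $f_{[\nu_x,\nu_y]} = \nu_x(f_{\nu_y}) = \nu_x(y) = p'(z)$. Lemma \ref{xy} gives $G(x) = u$ and $G(y) = cv$, which translates via injectivity of the function correspondence to $F(\nu_x) = \nu_u$ and $F(\nu_y) = c\nu_v$. Therefore $F([\nu_x,\nu_y]) = c[\nu_u,\nu_v]$, whose corresponding function on $\Dq$ is $cq'(w)$ by the analogous computation, giving $G(p'(z)) = cq'(w)$.

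For the second identity, the plan is to characterize the subspace $V_z := \{(pr)'(z) : r \in \C[z]\}$ of corresponding functions as precisely the centralizer of $[\nu_x,\nu_y]$ in $\lielnd{\Dp}$ (under the correspondence $\nu \leftrightarrow f_\nu$), and similarly $V_w \subset \mathcal{O}(\Dq)/\C$ for $[\nu_u,\nu_v]$. The subspace $V_z$ is exactly the $T$-weight-$0$ component of the image of $\lielnd{\Dp}$ under the correspondence, where $x,y$ carry weights $\pm 1$, and it is identified with $\C[z]$ via the bijection $r \mapsto (pr)'$ by Proposition \ref{correspondents}. To verify the centralizer identification, I use that the Lie bracket on $\lielnd{\Dp}$ becomes the Poisson bracket on corresponding functions by (\ref{bracketformula}). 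A direct computation gives $\nu_{p'(z)} = p''(z)\nu_z$, so $\{p'(z), f\} = p''(z)\nu_z(f)$. Decomposing $f = \sum_k f_k$ into weight-homogeneous components with $\nu_z(f_k) = kf_k$, the centralizer condition $\{p'(z), f\} = $ constant forces each weight-$k$-homogeneous term $p''(z) \cdot kf_k$ (for $k \neq 0$) to vanish in the integral domain $\mathcal{O}(\Dp)$, hence $f_k = 0$ for $k \neq 0$. So $f \in V_z$, and conversely $V_z$ is Poisson-abelian and contains $p'(z)$, so it lies in the centralizer.

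Since $F$ sends $[\nu_x,\nu_y]$ to $c[\nu_u,\nu_v]$ and a Lie isomorphism maps centralizers bijectively, the restriction $G|_{V_z} \colon V_z \isorightarrow V_w$ is a $\C$-linear bijection. Hence $G((pz^i)') = c(qh_i)'(w)$ for some $h_i \in \C[w]$. Composing $G|_{V_z}$ with the $\C$-linear bijections $\C[z] \isorightarrow V_z$, $r \mapsto (pr)'$, and $V_w \isorightarrow \C[w]$, $c(qh)' \mapsto h$, yields a $\C$-linear iso $\C[z] \to \C[w]$ sending $z^i \mapsto h_i$, so $\{h_i : i \geq 0\}$ is a basis of $\C[w]$. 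The main obstacle is the centralizer identification, specifically the weight-decomposition argument forcing weight-nonzero components of $f$ to vanish; this works uniformly for $\deg p \geq 2$, though the case $\deg p = 2$ (where $p''(z)$ is a nonzero constant) deserves separate verification.
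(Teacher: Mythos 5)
Your proof is correct and follows essentially the same route as the paper: the first identity is the paper's exact computation $G(p'(z))=F(\nu_x)(G(y))=\nu_u(cv)=cq'(w)$, and for the second the paper invokes Lemma \ref{lndtolnd} to say that $G$ maps $\ker(p''(z)\nu_z)$ onto $\ker(cq''(w)\nu_w)$, which is the same invariance of the subspace $\{(pr)'(z)\}$ that you obtain by identifying it as the centralizer of $[\nu_x,\nu_y]$. Your weight-decomposition argument identifying that centralizer with the weight-zero part $\{(pr)'(z)\}$ is a detail the paper leaves implicit (and your worry about $\deg p=2$ is unfounded, since $p''$ being a nonzero constant causes no problem in the integral-domain step).
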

\begin{proof}
 The first statement follows from a direct calculation using (\ref{corresp}),(\ref{bracket}) and Lemma \ref{xy}:
$$ G(p'(z))=G(f_{[\nu_x,\nu_y]}) =  \{  f_{F(\nu_x)}, f_{F(\nu_y)}  \}  = F(\nu_x)(G(y)) = \nu_u(cv) = cq'(w). $$
The second statement holds since the kernel of $p''(z)\nu_z$ is mapped by $G$ onto the kernel of $F(p''(z)\nu_z)=cq''(w)\nu_w$ 
by Lemma \ref{lndtolnd}.
\end{proof}

\begin{lemma}\label{xzyz}
 We have that $G(xz^i) = u(aw+b)^i$, $G(yz^i)=cv(aw+b)^i$ and $G((p(z)z^i)')= c(q(w)(aw+b)^i)'$ for some $a,c \in \mathbb{C}^*$, $b \in \mathbb{C}$.
\end{lemma}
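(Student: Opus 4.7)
The plan is to bootstrap from Lemmas \ref{xy} and \ref{z} by applying the bracket formula (\ref{bracketformula}) to a few well-chosen pairs. A direct calculation shows $\nu_y(xz^i)=\nu_x(yz^i)=(p(z)z^i)'$, so applying $G$ (using $F(\nu_y)=c\nu_v$ and $F(\nu_x)=\nu_u$) yields
$$\nu_v(G(xz^i))=(qh_i)'\quad\text{and}\quad\nu_u(G(yz^i))=c(qh_i)'.$$
Working in the $(u,w)$-chart on $\Dq$, where $\nu_v=q'(w)\partial_u+(q(w)/u)\partial_w$, and writing $G(xz^i)=\sum_j \alpha_j(w)u^j$ as a Laurent series in $u$, the first identity becomes a system of decoupled ODEs on the coefficients $\alpha_j$. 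Combined with the requirement that $G(xz^i)$ lies in the form (\ref{functions}), this forces, modulo constants,
$$G(xz^i)=uh_i(w)+\phi_i(v),\quad \phi_i\in v\,\C[v],$$
and, symmetrically, $G(yz^i)=cv\,h_i(w)+\tilde\phi_i(u)$ with $\tilde\phi_i\in u\,\C[u]$ (the polynomial in front of $v$ is the same $h_i$ up to a constant that is absorbed into $c$).

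To pin down $h_1$, use $\nu_x(x^2)=0$, whence $G(x^2)\in\ker\nu_u=\C[u]$. Since $\nu_u(G(xz))=u^2h_1'(w)+q'(w)\phi_1'(v)$, expressing $v=q(w)/u$ and separating powers of $u$ forces, using $\deg q\ge 3$, both $\phi_1=0$ and $h_1'\in\C$; hence $h_1(w)=\alpha w+\beta$. Moreover $\alpha\ne 0$: otherwise $h_1$ is a nonzero constant and $G((p(z)z)')=c(qh_1)'=c\beta q'$ would be proportional to $G(p'(z))=cq'$, contradicting the linear independence of $(p(z))'$ and $(p(z)z)'$ in $\lielnd{\Dp}$ together with the injectivity of $G$.

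The remainder uses the bracket identity $\{yz^k,xz^\ell\}=(p(z)z^{k+\ell})'$, immediate from (\ref{bracketformula}). Applying $G$ and expanding $\{cvh_k+\tilde\phi_k(u),\,uh_\ell+\phi_\ell(v)\}$ via (\ref{bracketformula}) together with $uv=q(w)$ gives, after a short calculation,
$$c(q(h_kh_\ell-h_{k+\ell}))'=u^2\tilde\phi_k'(u)h_\ell'(w)+cv^2h_k'(w)\phi_\ell'(v)+q'(w)\tilde\phi_k'(u)\phi_\ell'(v).$$
Specializing to $k=1$ (where $\tilde\phi_1=0$ and $h_1'=\alpha\ne 0$), the right hand side reduces to $c\alpha v^2\phi_\ell'(v)$, a function of $v$, while the left hand side is a function of $w$; both must therefore be constants. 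Since $\phi_\ell\in v\C[v]$, the identity $c\alpha v^2\phi_\ell'(v)=\mathrm{const}$ forces $\phi_\ell=0$, and then $q(h_1h_\ell-h_{1+\ell})=\mathrm{const}$ forces $h_{1+\ell}=h_1h_\ell$ modulo constants. By induction $h_i=h_1^i=(\alpha w+\beta)^i$; specializing $\ell=1$ instead symmetrically yields $\tilde\phi_k=0$ for every $k$. Setting $a=\alpha$ and $b=\beta$ gives $G(xz^i)=u(aw+b)^i$ and $G(yz^i)=cv(aw+b)^i$, and Lemma \ref{z} then gives $G((p(z)z^i)')=c(q(w)(aw+b)^i)'$. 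The main technical effort is the Laurent-and-ODE analysis in the first paragraph; the rest is routine unwinding of (\ref{bracketformula}).
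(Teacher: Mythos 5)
Your proof is correct and follows essentially the same route as the paper: both first derive $G(xz^i)=uh_i(w)+A(v)$ and $G(yz^j)=cvh_j(w)+B(u)$ from $\nu_y(xz^i)=\nu_x(yz^i)=(p(z)z^i)'$, and then expand the bracket $\lbrace xz^i,yz^j\rbrace=-(p(z)z^{i+j})'$ via (\ref{bracketformula}) to kill the extra summands and obtain $h_ih_j=h_{i+j}$, hence $h_i=h_1^i$ with $h_1$ linear. The only real difference is organizational: you pin down $h_1$ (degree one, $\phi_1=0$) by the separate observation $G(x^2)\in\ker\nu_u=\mathbb{C}[u]$ and then induct with $k=1$, whereas the paper runs the bracket identity for arbitrary $(i,j)$ in one stroke and extracts $\deg h_1=1$ from the basis property of $\lbrace h_i\rbrace$ recorded in Lemma \ref{z}.
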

\begin{proof}
 We have $\nu_y(xz^i) = (p(z)z^i)'$, and thus $c\nu_v(G(xz^i))= c(q(w)h_i(w))'$ by Lemma \ref{xy} and  Lemma \ref{z}. This can happen only if $G(xz^i) = 
uh_i(w) + A(v)$ for some $A\in\C[v]$. Similarly, we have $\nu_x(yz^i) = (p(z)z^i)'$, and thus $\nu_u(G(yz^j)) = c(q(w)h_i(w))'$, which only happens if $G(yz^j) = 
cvh_j(w) + B(u)$ for some $B\in\C[u]$.

 From (\ref{bracketformula}) we get
$$ \lbrace xz^i,yz^j\rbrace = - (p(z)z^{i+j})'.$$
Thus by the above Lemma we have
$$ \lbrace uh_i(w) + A(v),cvh_j(w) + B(u) \rbrace = -c(q(w)h_{i+j}(w))'.$$
On the other hand, by (\ref{bracketformula}) we have 
\begin{eqnarray*} 
&\lbrace uh_i(w) + A(v),cvh_j(w) + B(u) \rbrace =&\\& - c(q(w)h_i(w)h_j(w))' + q'(w)A'(v)B'(u) + u^2h_i'(w)B'(u) + cv^2h_j'(w)A'(v).&
\end{eqnarray*}
These equations can only hold if $A(v)=B(u) = 0$ and if $h_i(w)h_j(w)=h_{i+j}$ for all $i,j\geq 1$. From the latter we can conclude that $h_i(w) = (h_1(w))^i$ 
for some $h_1(w)$. Because  $\{ h_i(w) | i\geq 0 \}$  form a basis of $\C[w]$ we know that $h_1(w)=aw+b$ is of degree 1. Thus the claim follows.
\end{proof}

\begin{lemma}\label{16}
 We have $G(x^i)=a^{i-1}u^i$ and $G(y^i)=a^{i-1}(cv)^i$.
\end{lemma}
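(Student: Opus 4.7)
The plan is to use the fact that $G$ intertwines the Poisson bracket $\{\cdot,\cdot\}$ on functions (since $F$ is a Lie algebra homomorphism and the bracket on vector fields corresponds to the Poisson bracket on functions via (\ref{bracket}) and (\ref{bracketformula})), and then leverage the values of $G$ on $x$, $y$, $xz$, $yz$ already established in Lemma \ref{xy} and Lemma \ref{xzyz} to bootstrap to $G(x^i)$ and $G(y^i)$ by induction on $i$.

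First I would observe that the base case $i=1$ is exactly the content of Lemma \ref{xy}: $G(x) = u$ and $G(y) = cv$. For the inductive step, the key identity is a direct computation with (\ref{bracketformula}):
\[
\{xz, x^{i-1}\} \;=\; (i-1)\,x^i, \qquad \{yz, y^{i-1}\} \;=\; -(i-1)\,y^i,
\]
for $i\ge 2$. Note that both $xz$ and $x^{i-1}$ (respectively $yz$, $y^{i-1}$) lie in the space of functions of the form (\ref{functions}), so they correspond to vector fields in $\lielnd{\Dp}$ and $G$ is defined on them.

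Next I would apply $G$ to these bracket identities. Using Lemma \ref{xzyz}, $G(xz) = u(aw+b)$ and, inductively, $G(x^{i-1}) = a^{i-2}u^{i-1}$. A routine computation with (\ref{bracketformula}) on $\Dq$ gives
\[
\{u(aw+b),\; a^{i-2}u^{i-1}\} \;=\; (i-1)\,a^{i-1}\,u^i,
\]
since the only contributing term is $u\cdot(ua)\cdot(i-1)a^{i-2}u^{i-2}$. Dividing by $i-1$ yields $G(x^i) = a^{i-1}u^i$. The argument for $G(y^i) = a^{i-1}(cv)^i$ is entirely analogous, with the extra factor of $c$ coming from $G(y) = cv$ and $G(yz) = cv(aw+b)$, and the sign from $\{yz, y^{i-1}\} = -(i-1)y^i$ cancelling the sign produced on the $\Dq$ side.

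No step here is the genuine obstacle — the main work, namely identifying the correct form of $G$ on the generators $x, y, xz^i, yz^i$, has already been carried out in Lemmas \ref{xy}--\ref{xzyz}. The only thing to be careful about is keeping track of the constants $a$ and $c$ through the bracket computation and checking the bookkeeping on the signs in (\ref{bracketformula}); the induction itself is then a one-line calculation on each side.
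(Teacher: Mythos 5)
Your proof is correct and follows essentially the same route as the paper: both induct on $i$ by taking the Poisson bracket of $xz$ (resp.\ $yz$) with a power of $x$ (resp.\ $y$) and using the values of $G$ on $x$, $y$, $xz$, $yz$ from Lemmas \ref{xy} and \ref{xzyz}. The only cosmetic difference is that the paper first notes $G(x^i)\in\mathbb{C}[u]$ via the kernel argument of Lemma \ref{lndtolnd} and then solves the recursion $if_{i+1}(u)=af_i'(u)u^2$, whereas you substitute the inductive hypothesis directly into the bracket, which works just as well.
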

\begin{proof}
 We know that functions in $x$ are mapped by $G$ onto functions in $u$ since the $\ker\nu_x$ is mapped onto $\ker\nu_u$ by Lemma \ref{lndtolnd}. Let 
$f_i(u)=G(x^i)$.
 From $ ix^{i+1} =  \{  x^i,xz \} $ we get $$if_{i+1}(u)=   \{ G(x^i), G(xz) \} = f_i'(u)\nu_u(G(xz))=f_i'(u)\nu_u(u(aw+b))=af_i'(u)u^2$$
using (\ref{corresp}) and Lemma \ref{xzyz}.
Similarly we get for $g_i(v)=G(y^i)$
 $$ig_{i+1}(v)=g_i'(v)\nu_v(G(yz))=g_i'(v)\nu_v(cv(aw+b))=acg_i'(v)v^2. $$
Since $f_1(u)=u$ and $g_1(v)=cv$ by Lemma \ref{xy} the claim follows.
\end{proof}

\begin{lemma}\label{pq}
 We have $ca^2q(w)=p(aw+b)$.
\end{lemma}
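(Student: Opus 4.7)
The plan is to apply $G$ to two carefully chosen Poisson-bracket identities on $\Dp$, both of which produce expressions involving $p(aw+b)$, and then combine the results to isolate the claim.

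First I would start from the bracket $\{y, x^2\} = 2xp'(z)$ computed directly from (\ref{bracketformula}). Using $G(y)=cv$ and $G(x^2)=au^2$ from Lemmas \ref{xy} and \ref{16}, together with $G(xz^j) = u(aw+b)^j$ (which comes from linearly extending Lemma \ref{xzyz}), one computes on $\Dq$ that $\{cv, au^2\} = 2acu\,q'(w)$ and $G(2xp'(z)) = 2u\,p'(aw+b)$. Matching the two sides yields
\[
p'(aw+b) = ac\,q'(w),
\]
which integrates with respect to $w$ to $p(aw+b) = a^2 c\,q(w) + C$ for some constant $C \in \C$ that remains to be determined.

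To pin down $C$, I would repeat the argument with $\{x^2, y^2\} = -4xyp'(z) = -2(p(z)^2)'$. The right hand side is of the form $(p(z)r(z))'$ with $r = p$, and Lemma \ref{xzyz} extends linearly in $r$ to
\[
G\bigl((p(z)r(z))'\bigr) = c\bigl(q(w)\,r(aw+b)\bigr)'
\]
for every $r \in \C[z]$; specialising to $r=p$ this gives $G((p^2)') = c\bigl(q(w)\,p(aw+b)\bigr)'$. On the other hand the direct computation $\{au^2, ac^2 v^2\} = -2a^2c^2(q(w)^2)'$ matches this, and integrating produces $q(w)\,p(aw+b) = a^2 c\,q(w)^2 + D$ for another constant $D \in \C$.

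Substituting $p(aw+b) = a^2 c\,q(w) + C$ into the second integrated identity gives $C\cdot q(w) = D$, and since $\deg q \ge 3$ the polynomial $q$ is non-constant, forcing $C = 0$ (and $D=0$). Hence $p(aw+b) = a^2 c\,q(w)$. The only subtlety I foresee is the linear extension of Lemma \ref{xzyz} from monomial $r = z^i$ to arbitrary polynomials $r$, which is what lets the second bracket produce an expression involving $p(aw+b)$ itself rather than just its derivative.
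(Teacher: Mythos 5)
Your proof is correct and is essentially the paper's argument: your second bracket $\{x^2,y^2\}=-2(p(z)^2)'$ is exactly the paper's identity $x\nu_x(y^2)=(p^2(z))'$, transported by $G$ via Lemma \ref{16} and the linear extension of Lemma \ref{xzyz}. Your only addition is the preliminary bracket $\{y,x^2\}$ used to kill the integration constant, which the paper handles implicitly (evaluating at a root of $q$ and dividing by $q$); both routes are fine.
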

\begin{proof}
 We have $x\nu_x(y^2)= (p^2(z))'$ which yields $au\nu_u(ac^2v^2) = c(q(w)p(aw+b))'$ by the Lemma \ref{xzyz} and 
 Lemma \ref{16} . On the other hand, we have 
$au\nu_u(ac^2v^2)=a^2c^2(q^2(w))'$. Hence, the claim follows.
\end{proof}

\begin{lemma}\label{new2}
	Let $F_1,F_2: \lielnd{\Dp} \ \isorightarrow \ \lielnd{\Dq}$ be two isomorphisms of Lie algebras such that $F_1(x^{i}\nu_x) = F_2(x^{i}\nu_x)$ and $F_1(y^{i}\nu_y) = F_2(y^{i}\nu_y)$ for all $i \ge 0$. Then $F_1 = F_2$.
\end{lemma}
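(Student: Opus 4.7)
The plan is to consider $\Phi := F_2^{-1} \circ F_1$, an automorphism of $\lielnd{\Dp}$ that fixes every $x^i \nu_x$ and $y^i \nu_y$ with $i \ge 0$. Since the fixed locus of an automorphism of a Lie algebra is a subalgebra, it suffices to show that the Lie subalgebra $\mathfrak{g} \subset \lielnd{\Dp}$ generated by $\{x^i \nu_x, y^i \nu_y : i \ge 0\}$ equals the whole of $\lielnd{\Dp}$; this then forces $\Phi = \mathrm{id}$ and hence $F_1 = F_2$.

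I will argue on the level of corresponding functions via the bijection of Proposition \ref{prop8}. By (\ref{corresp}), the generators correspond, up to nonzero scalars, to $\{x^i : i \ge 1\} \cup \{y^j : j \ge 1\}$, and Lie brackets translate to Poisson brackets via (\ref{bracketformula}). By Proposition \ref{correspondents}, the target subspace of $\mathcal{O}(\Dp)/\C$ is spanned modulo constants by $\{x^i z^j\}_{i \ge 1, j \ge 0} \cup \{y^i z^j\}_{i \ge 1, j \ge 0} \cup \{(pz^k)'\}_{k \ge 0}$. The essential tool is identity (\ref{bracket}): for every $\nu \in \mathfrak{g}$, the function space $\tilde{\mathfrak{g}}$ corresponding to $\mathfrak{g}$ is stable under the derivation $\nu : \mathcal{O}(\Dp) \to \mathcal{O}(\Dp)$. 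Thus $\tilde{\mathfrak{g}}$ is preserved by every operator $x^i \nu_x$ and $y^j \nu_y$ with $i, j \ge 0$. From $\nu_x^k(y) = p^{(k)}(z)\,x^{k-1}$ and further composition with $x^j \nu_x$, I obtain $p^{(k)}(z)\, x^m \in \tilde{\mathfrak{g}}$ for all $m \ge k-1$, and symmetrically with $y$. The bracket $\{x^i, y^j\} = -ij\,p'(z)\,x^{i-1}y^{j-1}$ produces $p'(z)\,x^a y^b \in \tilde{\mathfrak{g}}$ for all $a, b \ge 0$, while the computation $\nu_y(x\,p^{(k)}(z)) = (p\,p^{(k)})'$ yields $(p\,p^{(k)})' \in \tilde{\mathfrak{g}}$.

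The remaining step is a downward propagation in the powers of $x$ and $y$, in the spirit of Lemmas \ref{Step1}--\ref{AllStartingOne}. Taking $k = \deg p - 1$ in $p^{(k)}(z)\, x^{k-1}$ yields an element of the form $(\alpha z + \beta)\,x^{\deg p - 2} \in \tilde{\mathfrak{g}}$ with $\alpha \neq 0$, hence $z\, x^{\deg p - 2} \in \tilde{\mathfrak{g}}$; varying $k$ downward gives $z^j x^{\deg p - 2}$ for successive $j$. Alternating applications of $\nu_x$ and $\nu_y$, combined with the coprimality of $p$ and $p'$ exactly as in Lemma \ref{AllStartingOne}, propagate this to all $z^j x^m$ and $z^j y^m$ with $m \ge 1,\ j \ge 0$. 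Finally, the identity $\{z^j x, y\} = -(p(z)\,z^j)'$ transfers the result to the last family, producing all $(pz^j)'$ and completing the generation of the spanning set.

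The principal obstacle is this last downward propagation: unlike in the proof of Proposition \ref{simple}, one cannot invoke ideal closure but must realise each reduction of powers of $x$ or $y$ by brackets with the specific operators $x^i \nu_x$ and $y^j \nu_y$. The key point that makes this possible is that such brackets already implement both the derivations $\nu_x, \nu_y$ and multiplication by the required powers of $x$ and $y$; nevertheless, verifying that the inductive reduction really stays inside $\mathfrak{g}$ is the central bookkeeping challenge of the proof.
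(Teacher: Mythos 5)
Your first paragraph is exactly the paper's proof: the paper observes that two Lie algebra homomorphisms agreeing on a generating set agree everywhere, and then simply cites \cite{KL13} for the fact that the vector fields $x^{i}\nu_x$ and $y^{i}\nu_y$ generate $\lielnd{\Dp}$. So the reduction to the generation statement is correct and is all the paper does; had you stopped there and invoked \cite{KL13}, the proof would be complete.

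The remainder of your argument, an attempted proof of the generation statement itself, has a genuine gap at precisely the step you flag. The mechanism of Lemma \ref{AllStartingOne} does not transfer to your setting: that lemma exploits the two-sided ideal property (\ref{ideal}), e.g.\ it puts $p'(z)z^jx^{m-1}+jp(z)z^{j-1}x^{m-1}$ into $\tilde I$ by applying $x^{m-1}\nu_x\in I$ to the function $yz^j$, where $yz^j$ is an \emph{arbitrary} element of the ambient algebra. For your subalgebra $\mathfrak g$ you may only bracket two elements both already known to lie in $\mathfrak g$, and $yz^j\in\tilde{\mathfrak g}$ is part of what you are trying to prove; so the linear combinations that isolate $p'(z)z^jx^{m-1}$ and $p(z)z^{j-1}x^{m-1}$ (and hence the appeal to $\gcd(p,p')=1$) are not available. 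There are also smaller inaccuracies along the way: ``varying $k$ downward'' in $p^{(k)}(z)x^{m}$ only produces $z^jx^{\deg p-2}$ for $j\le\deg p-1$, since the polynomials $p^{(\deg p-1)},\dots,p'$ together with constants span only $\C[z]_{\le \deg p-1}$, and $\nu_y(xp^{(k)}(z))=(pp^{(k)})'$ is only licensed for $k\le 2$ by your earlier inclusion $p^{(k)}(z)x^m\in\tilde{\mathfrak g}$, $m\ge k-1$. None of these observations is fatal to the truth of the generation statement (it is Theorem material in \cite{KL13}), but your inductive reduction of the $x$-degree is not carried out, and the tool you propose for it fails, so the proof as written is incomplete. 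Either cite \cite{KL13} as the paper does, or supply a genuine induction that stays inside $\mathfrak g$ at every step.
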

\begin{proof}
	The claim follows directly from the fact that the vector fields $x^{i}\nu_x$ and $y^{i}\nu_y$ generate $\lielnd{\Dp}$ (see \cite{KL13}).
\end{proof}

\begin{lemma} \label{identityonsubalgebra}
	Let $F_1, F_2: \Ve^0{\Dp} \ \isorightarrow \ \Ve^0{\Dq}$ be two isomorphisms of Lie algebras such that restrictions of $F_1$ and 
	$F_2$  to $\lielnd{\Dp}$ coincide. Then $F_1 = F_2$.
\end{lemma}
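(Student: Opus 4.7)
The plan is to exploit the direct sum decomposition
$\Ve^0{\Dp} = \lielnd{\Dp} \oplus \bigoplus_{i=0}^{\deg p - 3} \mathbb{C}\,z^i \nu_z$
provided by Proposition~\ref{correspondents}: since $F_1$ and $F_2$ already agree on the first summand, it suffices to prove that they also agree on each extra generator $z^i\nu_z$. I would do this by showing that the only element of $\Ve^0{\Dq}$ commuting with all of $\lielnd{\Dq}$ is zero, and then applying this to the differences $F_1(z^i\nu_z) - F_2(z^i\nu_z)$.

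The first preparatory step is to check that $\lielnd{\Dp}$ is a Lie ideal of $\Ve^0{\Dp}$. Via the formula $f_{[\nu,\mu]} = \nu(f_\mu)$ from~(\ref{bracket}), this reduces to verifying that for $\nu = z^k \nu_z$ and any $\mu$ whose corresponding function has the form~(\ref{functions}), the function $\nu(f_\mu)$ is again of that form; this is a direct computation because $z^k(x\partial_x - y\partial_y)$ scales each monomial $x^iz^j$ (resp.\ $y^iz^j$) appearing in $f_\mu$ by $iz^k$ (resp.\ $-iz^k$) and annihilates the $(pr)'(z)$ summand.

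Now fix $\nu \in \Ve^0{\Dp}$ and set $\xi := F_1(\nu) - F_2(\nu) \in \Ve^0{\Dq}$. For any $\mu \in \lielnd{\Dp}$ the ideal property gives $[\nu,\mu] \in \lielnd{\Dp}$, so $F_1([\nu,\mu]) = F_2([\nu,\mu])$; expanding both sides by the homomorphism property and using $F_1(\mu) = F_2(\mu)$ yields $[\xi, F_1(\mu)] = 0$. Lemma~\ref{lndtolnd} applied to $F_1$ and $F_1^{-1}$ shows that $F_1(\lielnd{\Dp}) = \lielnd{\Dq}$, so $\xi$ commutes with every element of $\lielnd{\Dq}$; in particular $[\xi,\nu_u] = [\xi,\nu_v] = 0$. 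Translating back via~(\ref{bracket}) gives $\nu_u(f_\xi) = \nu_v(f_\xi) = 0$, so $f_\xi$ belongs to $\ker\nu_u \cap \ker\nu_v$ inside $\mathcal{O}(\Dq)$. Since these kernels are the invariant rings $\mathbb{C}[u]$ and $\mathbb{C}[v]$ of the two standard $\mathbb{C}^+$-actions on $\Dq$ (as one sees from the trivialization $(\Dq)_u \cong (\mathbb{C}\setminus\{0\})\times\mathbb{C}$ noted in Section~\ref{Section3}), their intersection equals $\mathbb{C}$, so Proposition~\ref{prop8} forces $\xi = 0$.

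I expect the only genuine obstacle to be the verification that $\lielnd{\Dp}$ is an ideal inside $\Ve^0{\Dp}$; once that is in hand, the remainder is essentially a standard centralizer computation carried out through the dictionary between divergence-zero vector fields and their corresponding functions.
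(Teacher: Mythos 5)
Your proposal is correct and follows essentially the same route as the paper: the paper likewise bounds the problem by Proposition~\ref{correspondents}, observes via~(\ref{bracketformula}) that $\lbrace f, g(z)\rbrace = g'(z)\lbrace f,z\rbrace$ stays in the image of $\lielnd{\Dp}$ (your ideal property), deduces that $G_1(g(z))-G_2(g(z))$ Poisson-commutes with everything coming from $\lielnd{\Dq}$, and concludes from $\ker\nu_u\cap\ker\nu_v=\mathbb{C}[u]\cap\mathbb{C}[v]=\mathbb{C}$ that the difference is a constant, hence the zero vector field. Your repackaging as ``$\lielnd{\Dp}$ is an ideal plus the centralizer of $\lielnd{\Dq}$ in $\Ve^0(\Dq)$ is trivial'' is just a cleaner phrasing of the identical computation (the only pedantic point, present in the paper's version too, is that the bracket with $\nu_u$ is a priori a constant and one needs the analogue of Lemma~\ref{Step2} to see it is zero).
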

\begin{proof}
Let $G_1$ and $G_2$ be the isomorphism on the level of functions which correspond to $F_1$ and $F_2$ respectively.  
  Let also $g\in\C[z] $ be a polynomial.  From  Proposition \ref{correspondents} and (\ref{bracketformula})
  it follows that $ \lbrace f, g(z) \rbrace = g'(z) \lbrace f, z \rbrace$ has form (\ref{functions}) for any $f \in  \mathcal{O}(\Dp)$ such that $\nu_f \in \lielnd{\Dp}$.
   Since  restrictions of $F_1$ and 
	$F_2$  to $\lielnd{\Dp}$ coincide,
   $G_1(f) = G_2(f)$ and $G_1(\lbrace f, g(z) \rbrace) = G_2(\lbrace f, g(z) \rbrace)$
   for any $f$
  such that $\nu_f \in \lielnd{\Dp}$. Hence,  
  $\lbrace G_1(f), G_1(g(z)) - G_2(g(z)) \rbrace) = 0$ which implies that $\lbrace h, G_1(g(z)) - G_2(g(z)) \rbrace) = 0$
  for any $h \in \mathcal{O}(\Dq)$ such that $\nu_h \in  \lielnd{\Dq}$. 
 Using (\ref{bracketformula}) it is easy to see that the equality  $\lbrace u, G_1(g(z)) - G_2(g(z)) \rbrace)
 = \nu_u(G_1(g(z)) - G_2(g(z)) = 0$ implies that $G_1(g(z)) - G_2(g(z)) \in \mathbb{C}[u]$.
  Analogously, because $\lbrace v, G_1(g(z)) - G_2(g(z)) \rbrace) = 0$ we have 
  $G_1(g(z)) - G_2(g(z)) \in \mathbb{C}[v]$. Therefore, $G_1(g(z)) - G_2(g(z)) \in \mathbb{C}$. The claim follows
  from Proposition \ref{correspondents}.
%
 %
%
%
\end{proof}

\begin{proof}[Proof of Proposition \ref{mainprop}]
	$(a)$: 
	 By Lemma \ref{16}, we know that up to composition with an  automorphism $\Ad(\varphi)$ of $\lielnd{\Dq}$, where 
	 $\varphi: \Dq \to \Dq$ is some automorphism,  the following holds: there are $a,c\in\mathbb{C}^*$ and $b\in\mathbb{C}$ such that $G(x^{i}) = a^{i-1}u^{i}$ and $G(y^{i}) = a^{i-1}(cv)^{i}$ for all $i$. Let us define an isomorphism 
	 $\psi: \mathbb{A}^3 \isorightarrow \mathbb{A}^3$ in the following way: $\psi: (x,y,z) \mapsto (\frac{1}{a}u,\frac{1}{ac}v,\frac{w-b}{a})$. By Lemma \ref{pq}, $ca^2q(w)=p(aw+b)$. Thus, $\psi$ restricts to an isomorphism $\Dp \isorightarrow \Dq$ and 
	 then $\Ad(\psi): \lielnd{\Dp} \isorightarrow \lielnd{\Dq}$ is the isomorphism of the Lie algebras. Let $G_\psi$ be the corresponding map on the level of functions. By Lemma \ref{new1}, we have $G_\psi(x^{i}) = (au)^{i}/k(\psi)$ and $ G_\psi(y^{i}) = (acv)^{i}/k(\psi)$. Since $\psi^*\omega_q = a\omega_p$ and thus $k(\psi) = a$, we have $G(x^i)=G_\psi(x^i)$ and $G(y^i) = G_\psi(y^i)$ for all $i\geq 1$. Therefore, the conditions of Lemma \ref{new2} are satisfied, thus $F = \Ad(\psi)$.
	
	The proof of $(c)$ follows directly from $(a)$ when either $\deg p$ or $\deg q$ is greater or equal than $3$. Now assume that $\deg p=\deg q = 2$. Since any $\Dh$ is isomorphic to $\Dz(z-1)$ (see \cite{DP09}), where $\deg h = 2$, $(c)$ follows.

	$(b)$ Let  $F: \Ve^0{\Dp} \ \isorightarrow \ \Ve^0{\Dq}$ be an isomorphism of Lie algebras and let $\deg q \geq 3$. The map $F$ restricts to an isomorphism $\tilde{F}:  \lielnd{\Dp} \ \isorightarrow \ \lielnd{\Dq}$, because $F$ and $F^{-1}$ send locally nilpotent vector fields to locally nilpotent vector fields by Lemma \ref{lndtolnd}. From $(a)$ it follows that there is an isomorphism $\varphi: \Dp \isorightarrow \Dq$ such that 
	$\tilde{F} = \Ad(\varphi)$. 
	Hence, from Lemma \ref{identityonsubalgebra}  it follows that   $F=\Ad(\varphi)$ on $\Ve^0(\Dp)$.
\end{proof}

The next statement is used in the proof of Theorem \ref{main5}.

\begin{proposition}\label{remarkC*}
Let $\phi: \Aut^\circ(\Dp) \isorightarrow \Aut^\circ(\Dp)$ be an automorphism of an  ind-group  such that
restriction of $\phi$ to $\U(\Dp)$ is the identity map. Then $\phi$ is the identity.
\end{proposition}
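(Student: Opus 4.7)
My plan is to exploit the semidirect product decomposition $\Aut^\circ(\Dp) = \U(\Dp) \rtimes T$ from Proposition \ref{groupstructure}(d). Since $\phi$ is already the identity on $\U(\Dp)$, it suffices to prove that $\phi$ restricts to the identity on $T$. I will do this by showing that for every $t \in T$ the correction $t^{-1}\phi(t)$ lies in the centralizer $C := \Cent_{\Aut^\circ(\Dp)}(\U(\Dp))$, and then proving that $C = \{1\}$. The first step is immediate: for any $v \in \U(\Dp)$ the conjugate $tvt^{-1}$ is again in $\U(\Dp)$, so on the one hand $\phi(tvt^{-1}) = tvt^{-1}$ by hypothesis, while on the other $\phi(tvt^{-1}) = \phi(t)\, v\, \phi(t)^{-1}$. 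Comparing gives $t^{-1}\phi(t) \in C$.

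Next I will analyse $C$. Write an arbitrary element as $g = u\tau$ with $u \in \U(\Dp)$ and $\tau \in T$; the centralizing condition rewrites as $\tau v \tau^{-1} = u^{-1} v u$ for all $v \in \U(\Dp)$. Here I exploit the free product decomposition $\U(\Dp) = \U_x \ast \U_y$ of Proposition \ref{groupstructure}(c). Since $g$ fixes $\U_x$ pointwise, conjugation by $g$ preserves $\U_x$ as a set; but conjugation by $\tau$ also preserves $\U_x$ (it simply rescales the parameter of each $\alpha_{x,f}$), and so conjugation by $u$ must normalize $\U_x$ inside $\U(\Dp)$. The classical normal-form theory of free products forces the normalizer of a factor of a nontrivial free product to equal that factor, so $u \in \U_x$; by the symmetric argument applied to $\U_y$ also $u \in \U_y$, and since $\U_x \cap \U_y = \{1\}$ we conclude $u = 1$.

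It remains to treat the torus part. With $u = 1$, $\tau$ must centralize every element of $\U(\Dp)$. Evaluating on the particular flow $\alpha_{x,x}(s) \in \U_x$ and using the $T$-action on $\U_x$ gives $\tau\, \alpha_{x,x}(s)\, \tau^{-1} = \alpha_{x,\, t^{-1}x}(s)$, where $t \in \mathbb{C}^*$ is the scalar corresponding to $\tau$; equality with $\alpha_{x,x}(s)$ forces $t = 1$, hence $\tau = 1$. This shows $C = \{1\}$ and therefore $\phi(t) = t$ for every $t \in T$, which combined with $\phi|_{\U(\Dp)} = \id$ gives $\phi = \id$. The only step needing a little care is the normalizer claim for free products, but this is a standard fact (a direct application of the normal form, or a consequence of Kurosh's subgroup theorem), so no genuine obstacle remains; note in particular that the ind-group structure on $\Aut^\circ(\Dp)$ is not used in the argument, only its abstract group structure.
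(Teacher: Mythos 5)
Your proof is correct, but it takes a genuinely different route from the paper's. Both arguments begin identically, by observing that $t^{-1}\phi(t)$ must centralize $\U(\Dp)$; from there the paper passes to the Lie algebra, noting that $\Ad(t^{-1}\phi(t))$ acts trivially on $\lielnd{\Dp}$ and then computing with the coordinate functions (first $(t^{-1}\phi(t))^*(x)=x+c$, then $c=0$ from $\Ad$-invariance of $x\nu_x$, and similarly for $y$ and $z$) to conclude that $t^{-1}\phi(t)$ is the identity automorphism of $\Dp$. You instead stay entirely at the group level: writing the centralizing element as $u\tau$ in the semidirect product $\U(\Dp)\rtimes T$, you kill $u$ by the self-normalizing property of the factors of the free product $\U_x\ast\U_y$ (correctly applied, since $T$ and the centralizer both normalize each factor, and $\U_x\cap\U_y=\{1\}$), and then kill $\tau$ by the explicit formula $\tau\,\alpha_{x,x}(s)\,\tau^{-1}=\alpha_{x,t^{-1}x}(s)$. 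What your approach buys is that it is purely combinatorial — it avoids the vector-field/function correspondence entirely and, as you note, establishes the conclusion for an arbitrary abstract group automorphism that is the identity on $\U(\Dp)$, which is formally a stronger statement. What the paper's approach buys is uniformity: it reuses the $\Ad$-machinery on $\lielnd{\Dp}$ that the rest of the paper is built on, at the cost of invoking the Lie-algebra identification. One small imprecision in your write-up: conjugation by $\tau$ sends $\alpha_{x,f}$ to $\alpha_{x,f(t^{-1}\cdot)}$, so it rescales the argument of $f$ rather than the flow parameter in general; this is harmless, since all you need is that $T$ normalizes $\U_x$ and $\U_y$ setwise and the exact formula in the special case $f(x)=x$.
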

\begin{proof}
 Since the restriction of $\phi$ to $\U(\Dp)$ is the identity map,  $\phi(T)$ acts on $\U(\Dp)$ by conjugations in the same way as 
 $T$ acts on $\U(\Dp)$.
Hence,  $\Ad(t)$ and $\Ad(\phi(t))$
act on $\lielnd{\Dp}$ in the same way, where $t \in T$. 
This is equivalent to the statement that $\Ad(\psi^{-1} \circ \phi(t))$ acts identically on $\lielnd{\Dp}$. We claim that
then $t^{-1} \circ \phi(\psi)$ is a trivial automorphism of $\Dp$. Indeed, because $\Ad(\psi^{-1} \circ \phi(\psi))$ acts identically on $\lielnd{\Dp}$ it follows that $(t^{-1} \circ \phi(t))^*(x) = x + c$ for some $c \in \mathbb{C}$.  Hence, 
$(\psi^{-1} \circ \phi(\psi))^*(x^2) = (x+c)^2  = x^2 + 2cx$. Because  $\Ad(t^{-1} \circ \phi(t)) (x\nu_{x})$ has to be equal to
$x\nu_{x}$ it follows that $c = 0$.
Hence, 
$(t^{-1} \circ \phi(t))^*(x) = x$. Analogously, $(t^{-1} \circ \phi(t))^*(y) = y$ and 
$(t^{-1} \circ \phi(t))^*(z) = z$. 
Therefore, $\phi(t) = t$ for any $t \in  T$. The claim follows from Proposition \ref{groupstructure}.
\end{proof}


\par\bigskip\bigskip

\vskip1cm


\begin{thebibliography}{BCW82}
\bibitem [AFK13]{AFK13} 
I. Arzhantsev, H. Flenner, S. Kaliman, F. Kutzschebauch, and M. Zaidenberg, \textit{Flexible varieties and automorphism groups},
 Duke Math. J.  Volume 162, Number 4 (2013), 767-823.
 
 \bibitem [BK05]{BK05} 
A. Belov-Kanel, M. Kontsevich, \textit{Automorphisms of the Weyl algebra},
 Lett. Math. Phys., Volume 74 , Number 2 (2005), 181-199. 

 \bibitem [Di68]{Di68}  Jacques Dixmier, \textit{Sur les alg\'{e}bres de Weyl},
Bulletin de la Soci\'{e}t\'{e} Math\'{e}matique de France, Volume 96 (1968),  209-242.

\bibitem [DP09]{DP09} 
Adrien Dubouloz and Pierre-Marie Poloni,  \textit{On a class of Danielewski surfaces in affine 3-space}, J. Algebra Vol. 321 , 
no. 7 (2009), 1797-1812.

\bibitem [FK17]{FK15}  Jean-Philippe Furter and Hanspeter Kraft,  \textit{On the geometry of the automorphism group of affine $n$-space}, 
in preparation.

\bibitem [Jun42]{Jun42} 
Heinrich W. E. Jung,  \textit{\"{U}ber ganze birationale Transformationen der Ebene}, J. Reine Angew. Math. Vol. 184 (1942), 161-174. 

\bibitem [Kul53]{Kul53}  W. van der Kulk,  \textit{On polynomial rings in two variables}, Nieuw Arch. Wiskunde (3), Vol. 1 (1953), 33-41. 


\bibitem [KR17]{KR17} Hanspeter Kraft and Andriy Regeta,  \textit{Automorphisms of the Lie algebra of vector fields on affine n-Space},  J. Eur. Math. Soc, Volume 19, Issue 5, 2017,  1577-1588.

\bibitem [KK10]{KK10}
S. Kaliman and F. Kutzschebauch,  \textit{Algebraic volume density property of affine algebraic
manifolds}, Invent. Math.,   Volume 181, Issue 3, (2010),  605-647.

\bibitem [Kam75]{Kam75}
T. Kambayashi, \textit{On the absence of nontrivial separable forms of the affine plane}, Journal of algebra,
Volume 35, Issue 1Ð3, (1975), 449-456.   



\bibitem [Kum02]{Kum02} Shrawan Kumar, \textit{Kac-Moody groups, their flag varieties and representation theory},
   Progress in Mathematics, vol. 204,  Birkh\"{a}user  Boston Inc., Boston, MA, 2002.


\bibitem [KL13]{KL13} Frank Kutzschebauch and Matthias Leuenberger,
   \textit{Lie algebra generated by locally nilpotent derivations on Danielewski surfaces}, 
Ann. Sc. Norm. Super. Pisa Cl. Sci. (5) Vol. XV (2016), 183-207.

\bibitem [Lam05]{Lam05} 
St\'ephane Lamy,    \textit{Sur la structure du groupe d'automorphismes de certaines surfaces affines} 1, Publ. Mat. 49, Vol. 1, (2005), 3-20.
 
 \bibitem [ML70]{ML70} L. G. Makar-Limanov,	
 \textit{Automorphisms of a free algebra with two generators},  Functional Analysis and Its Applications, 	Vol. 4, Issue 3, (1970), 262-264.
 
 \bibitem [ML84]{ML84}
L. Makar-Limanov,  \textit{On automorphisms of Weyl algebra}, Bulletin de la SociŽtŽ MathŽmatique de France (1984)
Vol. 112,  359-363.
 
\bibitem [ML90]{ML90}
L. Makar-Limanov,  \textit{On groups of automorphisms of a class of surfaces}, Israel J. Math. Vol. 69, Issue 2,
(1990), 250-256.

\bibitem [Reg17]{Reg15} 
Andriy Regeta,  \textit{Characterization of $n$-dimensional normal affine $\SL_n$-varieties},
 	arXiv:1702.01173
	
\bibitem [Reg13]{Reg13}
 Andriy Regeta, \textit{Lie subalgebras of vector fields and the Jacobian Conjecture}, Ann. Institut Fourier, to appear.

\bibitem [Sh66]{Sh66} I.R. Shafarevich,
 \textit{On some infinite-dimensional groups},
Rend. Mat. e Appl. (5) 25 (1966),  208-212.

\bibitem[St13]{St13}
Immanuel Stampfli,  \textit{Contributions to automorphisms of affine spaces}, PhD thesis, University of Basel,
 2013.




   

\end{thebibliography}
\end{document}